\newtheoremstyle{custom}
  {3pt}
  {3pt}
  {\slshape}
  {}
  {\bfseries}
  {.}
  { }
   {}
\theoremstyle{custom}
\newtheorem{theorem}{Theorem}[section]
\newtheorem{proposition}[theorem]{Proposition}
\newtheorem{proposition/definition}[theorem]{Proposition/Definition}
\newtheorem{lemma}[theorem]{Lemma}
\newtheorem{corollary}[theorem]{Corollary}
\newtheorem{conjecture}[theorem]{Conjecture}
\theoremstyle{definition}
\newtheorem{example}[theorem]{Example}
\theoremstyle{remark}
\newtheorem{remark}[theorem]{Remark}
\newtheoremstyle{exercise}
  {3pt}
  {6pt}
  {}
  {}
  {\bfseries}
  {:}
  { }
   {}
\theoremstyle{exercise}
\newtheorem{exercise}[theorem]{Exercise}
\newtheoremstyle{exercises}
  {3pt}
  {6pt}
  {}
  {}
  {\bfseries}
  {:}
  {\newline}
   {}
\theoremstyle{exercise}
\newtheorem{exercises}[theorem]{Exercises}
\def\boxit#1{\vbox{\hrule height1pt\hbox{\vrule width1pt\kern3pt
  \vbox{\kern3pt#1\kern3pt}\kern3pt\vrule width1pt}\hrule height1pt}}
\def\trank{\text{rank}}
\def\BC{\mathbb C}
\def\BP{\mathbb P}
\def\pp#1{\mathbb P^{#1}}
\def\pp#1{{\mathbb P}^{#1}}
\def\tdim{{\rm dim}}
\def\hd{,...,}
\def\ww{\wedge}
\def\inv{{}^{-1}}
\def\cB{{\mathcal B}}\def\cA{{\mathcal A}}
\def\cO{{\mathcal O}}
\def\BB{{\mathbb B}}
\def\ZZ{\mathbb Z}
\def\11{\mathbf 1}
\def\a{\alpha}
\def\o{\omega}
\def\b{\beta}
\def\s{\sigma}
\def\k{\kappa}
\def\d{\delta}
\def\ot{{\mathord{ \otimes } }}
\def\op{{\mathord{\,\oplus }\,}}
\def\ra{{\mathord{\;\rightarrow\;}}}
\def\La#1{\Lambda^{#1}}
\def\op{\oplus}
\def\BZ{\Bbb Z}
\def\ep{\epsilon}
\def\op{\oplus}
\def\ul{\underline}
\def\s{\sigma}
\def\a{\alpha}
\def\b{\beta}
\def\FS{\mathfrak  S}
\def\ol{\overline}
\def\BP{\mathbb  P}
\def\BC{\mathbb  C}
\def\pp#1{\mathbb  P^{#1}}
\def\cC{\mathcal  C}
\def\ep{\epsilon}
\def\hd{, \hdots ,}
\def\inv{{}^{-1}}
\def\ii{|II|}
\def\kk#1#2{k_{{#1}{#2}}}
\def\La#1{\Lambda^{#1}}
\def\pp#1{\mathbb  P^{#1}}
\def\ur{\underline{\mathbf{R}}}
\def\uR{\underline{\mathbf{R}}}
\def\ra{\rightarrow}
\def\tdet{\operatorname{det}}
\def\tperm{\operatorname{perm}}
\def\tdim{\operatorname{dim}}
\def\tlim{\lim}
\def\tmin{\operatorname{min}}
\def\trank{\operatorname{rank}}
\def\ww{\wedge}
\def\be{\begin{equation}}
\def\ene{\end{equation}}
\def\ccc{{\mathbf{c}}}
\def\tsgn{{\rm{sgn}}}
\def\trank{\mathbf{R}}
\def\G{\Gamma}
\def\Mone{M_{\langle 1\rangle}}\def\Mtwo{M_{\langle 2 \rangle}}
\def\Mone{M_{\langle 1\rangle}}
\def\Mtwo{M_{\langle 2\rangle}}
\def\trank{{\mathrm {rank}}}
\def\ccc{\mathbf{c}}
\def\nnn{\mathbf{n}}
\def\Mone{M_{\langle
1\rangle}}
\def\Mtwo{M_{\langle 2\rangle}}
\def\trank{{\mathrm {rank}}}
 \def\ccc{{\bold c}}
\def\nnn{\bold n}
\def\BB{\mathbb{B}}
\def\jj{{j'}}\def\kk{{k'}}\def\ii{{i'}}
\def\hj{{\hat j}}\def\hk{{\hat k}}\def\hi{{\hat i}}\def\hjj{{\hat j'}}\def\hkk{{\hat k'}}\def\hii{{\hat i'}}
\keywords{Matrix multiplication complexity, Tensor rank, Asymptotic rank, Laser method}
\subjclass[2010]{68Q17; 14L30, 15A69}
\renewcommand{\a}{\alpha}
\renewcommand{\b}{\beta}
\renewcommand{\BC}{\mathbb{C}}
\renewcommand{\k}{\kappa}
\renewcommand{\d}{\delta}
 \newcommand{\uQ}{\underline{\mathbf{Q}}}
\newcommand{\aQ}{\uwave{\mathbf{Q}}}
\newcommand{\aR}{\uwave{\mathbf{R}}}
\newcommand{\bfR}{\mathbf{R}}
\renewcommand{\bar}[1]{\overline{#1}}
\renewcommand{\hat}[1]{\widehat{#1}}
\renewcommand{\emptyset}{\font\cmsy = cmsy11 at 11pt
 \hbox{\cmsy \char 59}
}
\newcommand{\textsum}{{\textstyle\sum}}
\begin{document}

\author[A. Conner, H. Huang, J.M. Landsberg]{Austin Conner, Hang Huang, and  J. M. Landsberg}

\address{Department of Mathematics, Texas A\&M University, College Station, TX 77843-3368, USA}
\email[A. Conner]{connerad@math.tamu.edu}
\email[H. Huang]{hhuang@math.tamu.edu}
 \email[J.M. Landsberg]{jml@math.tamu.edu}

\title[Algebraic geometry and the laser method]{Bad and good news for Strassen's laser method:
Border rank of $\tperm_3$  and strict submultiplicativity}

\thanks{Landsberg   supported by NSF grant  AF-1814254}

\keywords{Tensor rank,   Matrix multiplication complexity,    border rank}

\subjclass[2010]{68Q15, 15A69, 14L35}

\begin{abstract}  We   determine  the border ranks of tensors that could 
potentially advance the known upper bound for
the exponent $\o$  of matrix multiplication. The Kronecker
square of the small $q=2$ Coppersmith-Winograd tensor  equals
the $3\times 3$ permanent,   and could potentially be used to show $\o=2$.
We prove the negative result for complexity theory that its  border rank is $16$, resolving a longstanding problem.
Regarding its $q=4$ skew cousin in $\BC^5\ot \BC^5\ot \BC^5$,
which could potentially be used to prove $\o\leq 2.11$,  we show the border rank of its Kronecker
square   is at most $42$,   a remarkable sub-multiplicativity result, as the square
of its border rank is $64$. 
We also determine moduli
spaces  $\ul{VSP}$   for the small Coppersmith-Winograd tensors.
 \end{abstract}

\maketitle

\section{Introduction}
This paper advances both upper and  lower bound techniques in the study of the complexity of tensors
and applies these advances to tensors that may be used to upper bound  the exponent $\o$  of matrix multiplication.

The exponent $\omega$ of matrix multiplication is defined as
\[
\omega :=\inf \{\tau \mid  \text{ two } \nnn \times \nnn \text{ matrices may be multiplied using } O(\nnn^\tau) \text{ arithmetic operations}\}.
 \]
It is a fundamental constant governing the complexity of the basic operations in linear algebra. 
It is generally conjectured that $\omega = 2$. It has been known since 1988 that
$\o\leq 2.38$ \cite{MR91i:68058} which was slightly improved upon 2011-2014  
\cite{stothers,Williams,LeGall:2014:PTF:2608628.2608664}.
All  new  upper bounds on  $\o$ since 1987 have been obtained 
using Strassen's laser method,  which    bounds $\o$  via auxiliary    tensors,
  see any of \cite{MR91i:68058,blaserbook,MR3729273} for a discussion. 
The    bounds of $2.38$ and below  were obtained using  the    big   Coppersmith-Winograd tensor
as  the  auxiliary tensor.
In \cite{MR3388238} it was shown the big Coppersmith-Winograd tensor could not
be used to prove $\o<2.3$ in the usual laser method. 

In this paper we examine $14$ tensors that potentially could be used to prove $\o<2.3$ with the laser method.
Our approach is via algebraic geometry and representation theory, building on the recent advances
in \cite{BBapolar,CHLapolar}.
We solve the longstanding problem (e.g.,    \cite[Problem 9.8]{blaserbook}, 
\cite[Rem. 15.44]{BCS})    
of determining the border rank of the Kronecker square of the 
only Coppersmith-Winograd tensor that could potentially prove $\o=2$
(the $q=2$ small Coppersmith-Winograd tensor). The answer is a negative
result for the advance of upper bounds, as it is $16$, the maximum possible value.
On the positive side, we show that  a tensor that could potentially be
used to prove $\o< 2.11$  has border rank of its Kronecker
square   significantly smaller than the square of its border rank. While this result alone does
not give a new upper bound on the exponent, it opens a promising new direction for upper bounds.
We also develop   new lower   and upper  bound techniques,
and present directions  for future research.

The tensors we study are  the small Coppersmith-Winograd tensor \cite{MR664715}  $T_{cw,q}$ for $2\leq q\leq 10$ (nine such) and
its skew cousin \cite{CGLVkron}  $T_{skewcw,q}$  for even $q\leq 10$ (five such).
(These tensors are defined for larger $q$ but they are only useful for the laser method
when $q\leq 10$.)
The tensors $T_{cw,2}$ and $T_{skewcw,2}$   potentially could be used to prove $\o=2$.
Explicitly,
the small Coppersmith-Winograd tensors  \cite{MR91i:68058} are
$$T_{cw,q}=\sum_{j=1}^q a_0\ot b_j\ot c_j+a_j\ot b_0\ot c_j+a_j\ot b_j\ot c_0
$$
and, for $q=2p$ even,  its skew cousins \cite{CGLVkron} are
$$
T_{skewcw,q}=\sum_{\xi=1}^p a_0\ot  b_\xi\ot c_{\xi+p} - a_0\ot b_{\xi+p}\ot c_\xi 
- a_{\xi}\ot b_0\ot c_{\xi+p} + a_{\xi+p}\ot b_0\ot c_{\xi }
+a_{\xi}\ot b_{\xi+p}\ot c_{0}- a_{\xi+p}\ot b_{\xi }\ot c_{0}.
$$
The small Coppersmith-Winograd tensors are symmetric tensors and their
skew cousins are skew-symmetric tensors. When $q=2$, after a change of basis
$T_{cw,2}$ is just a monomial written as a tensor,
$
T_{cw,2}=\sum_{\s\in \FS_3} a_{\s(1)}\ot b_{\s(2)}\ot c_{\s(3)}
$
and 
$
T_{skewcw,2}=\sum_{\s\in \FS_3}\tsgn(\s) a_{\s(1)}\ot b_{\s(2)}\ot c_{\s(3)}
$. Here $\FS_3$ denotes the permutation group on three elements.

We need the following definitions to state our results:

A tensor $T\in  A\ot B\ot C=\BC^m\ot \BC^m\ot \BC^m$ has {\it rank one} if   
$T=a\ot b\ot c$ for some $a\in A$, $b\in B$, $c\in C$, and
the   {\it rank} of  $T$, denoted $\bold R(T)$,  is the smallest $r$ 
such that $T$ may be written as a sum of $r$ rank one tensors. 
The {\it border rank} of $T$, denoted $\ur(T)$,   is the smallest $r$ such that 
$T$ may be written as a limit of   rank $r$  tensors. In geometric
language, the border rank is  smallest $r$ such that $[T]\in \sigma_r(Seg(\BP 
A\times \BP B\times \BP C))$, where  $\sigma_r(Seg(\BP 
A\times \BP B\times \BP C))$  denotes  the $r$-th secant variety of the Segre variety of 
rank one tensors. 

For symmetric tensors $T\in S^3A\subset A\ot A\ot A$ we may also consider the {\it Waring} or {\it symmetric rank}
of $T$, $\bold R_S(T)$, the smallest $r$ such that $T=\sum_{s=1}^r v_s\ot v_s\ot v_s$
for some $v_s\in A$,  and the Waring
border rank $\ur_S(T)$, the smallest $r$ such that 
$T$ may be written as a limit of  Waring  rank $r$  symmetric tensors.

For tensors $T\in A\ot B\ot C$ and $T'\in A'\ot B'\ot C'$, the {\it Kronecker product} of $T$ and $T'$ is the tensor $T\boxtimes T' := T \ot T' \in (A\ot A')\ot (B\ot B')\ot (C\ot C')$, regarded as $3$-way tensor. Given $T \in A \otimes B \otimes C$, the {\it Kronecker powers}  of $T$ are $T^{\boxtimes N} \in A^{\otimes N} \otimes B^{\otimes N} \otimes C^{\otimes N}$, defined iteratively. Rank and border rank are submultiplicative under Kronecker product: $\bfR(T \boxtimes T') \leq \bfR(T) \bfR(T')$, $\uR(T \boxtimes T') \leq \uR(T) \uR(T')$, and both inequalities may be strict.  

Strassen's {\it  laser method} \cite{MR882307,MR664715} obtains upper bounds on $\o$ by
showing a random degeneration 
of a large Kronecker power of a simple tensor
degenerates further  to a sum of disjoint matrix multiplication
tensors, and then applying  Sch\"onhage's asymptotic sum inequality \cite{MR623057}. 
The relevant results for this paper   are:
 
  For all $k$ and $q$,  \cite{MR91i:68058}
\begin{equation}\label{cwbndq}
\omega\leq  \log_q(\frac 4{27}(\uR(T_{cw,q}^{\boxtimes k}))^{\frac 3k}) .
\end{equation}
For all $k$ and even $q$,  \cite{CGLVkron}  
\begin{equation}\label{skewcwbndq}
\omega\leq  \log_q(\frac{4}{27}(\uR(T_{skewcw,q}^{\boxtimes k}))^{\frac{3}{k}}) .
\end{equation}

Coppersmith-Winograd  \cite{MR91i:68058} showed $\ur(T_{cw,q})=q+2$. Applied to \eqref{cwbndq}
with $k=1$ and $q=8$ gives $\o\leq 2.41$, which was the previous record before $2.38$.

The most natural way to upper bound  the exponent of matrix multiplication would
be to upper bound the border rank of the matrix multiplication tensor directly.
There are very few results in this direction: work of  Strassen \cite{Strassen493}, Bini  \cite{MR592760}, Pan (see, e.g., \cite{MR765701}),  and Smirnov (see, e.g., \cite{MR3146566})
are all we are aware of.
  In order to lower
the exponent further with the matrix multiplication tensor the first
opportunity to do so would be to show  the border rank of the $6\times 6$ matrix multiplication tensor
equaled its known lower bound of $69$ from \cite{MR3842382}.

\subsection{Main Results}
After the barriers of  \cite{MR3388238}, the 
auxiliary tensor  viewed as most promising for upper bounding  the exponent,
or even proving it is two,  is the small Coppersmith-Winograd tensor, or
more precisely its Kronecker powers. In \cite{CGLVkron} bad news in this
direction was shown for the square of most of these tensors and even the cube.
Left open was the square of $T_{cw,2}$ as it was unaccessible by the technology
available at the time (Koszul flattenings and the border substitution method), 
although it was shown that $15\leq \ur(T_{cw,2}^{\boxtimes 2})\leq 16$.
With the advent of border apolarity \cite{BBapolar,CHLapolar}
and the Flag Condition (Proposition \ref{flagbapolar}) that
strengthens it,  we are able to resolve this last open case.
See Remark \ref{perm3prev} for an explanation why this result was previously unaccessible, 
even with the techniques of \cite{BBapolar,CHLapolar}.
The result for the exponent is    negative:

\begin{theorem} \label{perm316} $\ur(T_{cw,2}^{\boxtimes 2})=16$.
\end{theorem}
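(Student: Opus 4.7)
The goal is to show $\uR(T_{cw,2}^{\boxtimes 2})\geq 16$, since the matching upper bound is already recorded in the excerpt. The plan is a proof by contradiction using the border apolarity framework of \cite{BBapolar,CHLapolar}, strengthened by the Flag Condition (Proposition \ref{flagbapolar}). Assume $\uR(T_{cw,2}^{\boxtimes 2})\leq 15$ and let $T := T_{cw,2}^{\boxtimes 2} \in A\ot B\ot C$ with $A,B,C\cong \BC^9$. Border apolarity then produces a multigraded ideal $I\subset \Sym(A^*)\ot \Sym(B^*)\ot \Sym(C^*)$ such that the trigraded Hilbert function $h_{R/I}$ attains the value $15$ in every sufficiently large multidegree, $T$ annihilates the $(1,1,1)$ component $I_{1,1,1}\subset A^*\ot B^*\ot C^*$, and the $(1,0,0),(0,1,0),(0,0,1)$ components vanish. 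The strategy is to use the large symmetry group of $T$ to enumerate candidate initial data $(I_{2,1,0}, I_{1,2,0},\dots, I_{1,1,1})$ and then eliminate every candidate either directly or via the Flag Condition.

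First I would identify the symmetry group $G_T\subset \GL(A)\times \GL(B)\times \GL(C)$. Because $T_{cw,2}^{\boxtimes 2}$ is (after change of basis) the $3\times 3$ permanent tensor, $G_T$ contains a maximal torus of rank $4$ together with a finite group containing $\FS_3\times \FS_3$ acting by row and column permutations, and the cyclic triality permuting $A,B,C$ that exists for all small Coppersmith-Winograd tensors. Since $I$ may be chosen to be a flat limit of $G_T$-fixed ideals, we may assume $I$ is $G_T$-stable; this decomposes each multigraded piece into isotypic components for the torus and makes the enumeration of weight-space decompositions feasible.

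Second, I would enumerate the admissible $I_{1,1,1}$. The annihilator condition $\langle T,\, I_{1,1,1}\rangle=0$ cuts out a specific $G_T$-stable subspace of $A^*\ot B^*\ot C^*$, and border apolarity requires $\tdim I_{1,1,1}= 9^3 - h_{R/I}(1,1,1)\geq 9^3-15$. Combined with $G_T$-stability this gives a short list of candidates, each described by which $T$-weight spaces are retained. I would then propagate these candidates to the pieces $I_{2,1,1},I_{1,2,1},I_{1,1,2}$ by requiring that the corresponding bigraded multiplications $A^*\ot I_{1,1,1}\to I_{2,1,1}/({\rm known})$ produce ideals, i.e., that the candidate $I_{1,1,1}$ generates an honest ideal whose Hilbert function does not exceed $15$ in any of the multidegrees that sit on the flag used in Proposition \ref{flagbapolar}.

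The main obstacle is step three, where one must rule out every surviving candidate. For most candidates a direct Hilbert function check shows the ideal generated violates $h_{R/I}\leq 15$ in some multidegree; but a handful of candidates will pass these direct checks and this is precisely why the problem was inaccessible to the plain border apolarity method of \cite{BBapolar,CHLapolar} (see Remark \ref{perm3prev}). For these remaining cases I would invoke the Flag Condition: the condition says that along any complete flag refining the multigrading, the Hilbert function of $I$ restricted at each step must remain compatible with a family of ideals of $15$ points, and this rules out the residual candidates by producing an inconsistency between the dimension growth on one flag and what a genuine degeneration of $15$ rank-one points would allow. Combining these eliminations shows no $(15,T)$-border apolar ideal exists, contradicting the hypothesis $\uR(T)\leq 15$ and completing the proof.
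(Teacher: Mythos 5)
Your high-level framework---proof by contradiction via border apolarity, exploiting $G_T$-symmetry to restrict to Borel-fixed ideals, and invoking the Flag Condition to rule out candidates that pass the plain apolarity tests---is indeed the framework the paper uses. However, there are two substantive problems with the proposal as written.

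First, you focus on the wrong multidegrees. The paper's argument never leaves total degree three, and never examines $I_{1,1,1}$ or the propagation to $I_{2,1,1}, I_{1,2,1}, I_{1,1,2}$. Instead it works entirely with the annihilator $E_{110}=I_{110}^\perp\subset A\otimes B$ and its complement $E_{110}'$ to $T(C^*)$, and uses only the $(210)$ and $(120)$ tests. The argument shows that no $15$-dimensional $E_{110}$ can pass both of those tests. Your plan of ``enumerate $I_{1,1,1}$ candidates, propagate, check Hilbert functions'' is a plausible-sounding alternative, but you give no reason to believe it terminates or that the residual cases could be eliminated; in particular, nothing in your sketch explains why the bound is $16$ rather than $15$.

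Second, the actual mathematical content of the proof is absent. The paper introduces the free/pure/mixed kernel decomposition $\kappa_f,\kappa_p,\kappa_m$ (\S\ref{pmker}): passing the $(210)$-test requires $\kappa_p+\kappa_m\geq r-\kappa_f$, and for $\tperm_3$ one computes $\kappa_f=1$, so one needs $\kappa_p+\kappa_m\geq 14$ (and the same for $\kappa_p'+\kappa_m'$). The heart of the proof is then a detailed weight analysis. The Flag Condition is not used as you describe (checking ``dimension growth along a flag refining the multigrading''); rather, Proposition \ref{flagbapolar} forces the existence of a Borel-fixed filtration $F_1\subset F_2\subset\cdots\subset F_r=E_{110}$ with $F_j\subset\sigma_j(Seg(\BP A\times\BP B))$, and the paper uses the classification of small secant varieties to explicitly list which weight vectors and filtrand types (rank-one vectors, tangent vectors to various types, second derivatives) may occur in $E_{110}'$. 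This combinatorial control over $E_{110}'$, together with the inventory of all possible mixed-kernel relations \eqref{m1}--\eqref{m66} and the Cartan-test bound \eqref{ctest} on $\kappa_p$, yields the two key lemmas ($\kappa_m,\kappa_m'\leq 4$; $\kappa_p,\kappa_p'\leq 10$ with $\kappa_m,\kappa_m'<4$ when equality holds), which together produce the contradiction. None of this appears in your proposal, so as written it is a roadmap of intent rather than a proof; you would need to supply the weight classification, the kernel decomposition, and the two lemmas (or some genuine substitute) to have an argument.
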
 

In \cite{CGLVkron} it was observed that  $ T_{cw,2}^{\boxtimes 2} =\tperm_3$, the $3\times 3$ permanent considered
as a tensor. Previously   Y. Shitov  \cite{shitovperm3} showed that
the Waring rank of $\tperm_3$ is at least $16$, which matches the upper bound
of \cite{MR3492642}.

Theorem \ref{perm316} is proved in \S\ref{perm316pf}.

We determine the border rank of  $T_{skewcw,q}$ in the range relevant for the laser
method:
  
\begin{theorem} \label{32qupper}
 $\ur(T_{skewcw,q})\leq \frac 32q+2$ and equality holds for $q\leq 10$.
 \end{theorem}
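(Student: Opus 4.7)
Proof proposal.

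For the upper bound, the starting point is the structural observation that $T_{skewcw,q}$ decomposes as a sum of $p=q/2$ blocks, each being the $3$-tensor of a $3\times 3$ determinant on the triple of letters $(0,\xi,\xi+p)$. Expanding by cofactors, the $\xi$-th summand of the defining expression equals
\[
\det \begin{pmatrix} a_0 & a_\xi & a_{\xi+p} \\ b_0 & b_\xi & b_{\xi+p} \\ c_0 & c_\xi & c_{\xi+p} \end{pmatrix},
\]
so each block is the skew-symmetric $3$-tensor $e_0\wedge e_\xi\wedge e_{\xi+p}$ embedded in $A\otimes B\otimes C$, whose border rank is $5$ via a classical one-parameter family. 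The crucial point is that one can arrange such a border rank $5$ decomposition of each block so that \emph{two} of its rank one curves depend only on the letters $a_0,b_0,c_0$ (and are independent of $\xi$), while the remaining three genuinely involve $\xi$ or $\xi+p$. Summing over $\xi$, the two shared curves contribute only once, producing $2+3p=\tfrac{3}{2}q+2$ rank one curves whose limit is $T_{skewcw,q}$. The required cross-block cancellations are automatic, because outside of the $a_0,b_0,c_0$ axes different blocks live in disjoint subspaces of $A\otimes B\otimes C$.

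For the matching lower bound in the range $q\leq 10$, the plan is to invoke the border apolarity framework of \cite{BBapolar,CHLapolar} together with the Flag Condition (Proposition~\ref{flagbapolar}) developed earlier. For each $q\in\{2,4,6,8,10\}$ separately, I would show that no multi-graded ideal $I\subset\BC[A^*,B^*,C^*]$ of colength $\tfrac{3}{2}q+1$ is apolar to $T_{skewcw,q}$ and satisfies the Flag Condition. The symmetry group of $T_{skewcw,q}$ contains a diagonal $\SL_2^{\oplus p}$ acting on the pairs $(a_\xi,a_{\xi+p})$ (and similarly on $B$ and $C$), as well as a $\mathfrak{S}_p$ permuting the blocks; this reduces the candidate ideal components on each graded piece to a finite list of orbits, and the Flag Condition eliminates the survivors.

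The main obstacle is the case $q=10$, where the ambient dimension is $11$, the target border rank is $17$, and the family of candidate apolar ideal components to eliminate is the largest. For small $q$ (e.g.\ $q=2$), the lower bound is accessible via classical Koszul flattenings, but for $q=8,10$ the Flag Condition must carry most of the weight and the verification is expected to involve symbolic Gr\"obner basis computation carried out within the $\SL_2^{\oplus p}$-equivariant setting. A secondary technical point on the upper bound side is confirming that the two shared rank one curves really can be chosen uniformly across all $\xi$ in a border rank $5$ decomposition of $\det_3$; this amounts to an explicit construction that should be written out once and then reused $p$ times.
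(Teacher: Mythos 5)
The blockwise picture (each $\xi$ contributes a copy of $T_{skewcw,2}$ on the letters $0,\xi,\xi+p$, and the total count is $3p+2$) agrees with the paper, but the mechanism you propose for the upper bound cannot work. If two of the five rank-one curves in a border rank $5$ decomposition of a block were supported on $\langle a_0\rangle\ot\langle b_0\rangle\ot\langle c_0\rangle$, their sum would be a single multiple of $a_0\ot b_0\ot c_0$ for every $t$, so the block, which is $T_{skewcw,2}$ up to relabelling and has border rank $5$, would be a limit of tensors of rank at most $4$ --- a contradiction. For the same reason, three block-specific curves necessarily leave lower-order error terms involving the letters $a_{\xi+p},b_{\xi+p},c_{\xi+p}$ (in the paper's decomposition these are $a_0\ot b_0\ot c_{\xi+p}+a_0\ot b_{\xi+p}\ot c_0+a_{\xi+p}\ot b_0\ot c_0$ at order $t$), so the cross-block cancellations are not ``automatic by disjointness'' and cannot be effected by curves supported only on $a_0,b_0,c_0$. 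The paper's decomposition \eqref{brskewcw} uses exactly one pure $a_0\ot b_0\ot c_0$ curve together with one genuinely global curve $(a_0+t^3\sum_\xi a_{\xi+p})\ot(b_0+t^3\sum_\xi b_{\xi+p})\ot(c_0+t^3\sum_\xi c_{\xi+p})$, whose cross terms kill the block-dependent errors of all blocks simultaneously; the dependence of this shared curve on all $\xi$ is forced, so your construction is not a small technical point ``to be written out once,'' but needs to be replaced by a different (global) correction term.

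The lower bound portion is a strategy rather than a proof: ``I would show no ideal of colength $\frac32q+1$ survives'' and ``the verification is expected to involve Gr\"obner basis computation'' defer exactly the step where all the content lies. The paper carries this out by working with ideals fixed under a Borel of the full symmetry group $G_{T_{skewcw,q}}=Sp(M)\times GL(L)\times (M^*\ot L)\rtimes\BZ_3$ (much larger than the $\SL_2^{\oplus p}\rtimes\FS_p$ you invoke), decomposing $A\ot B$ modulo $T(C^*)$ as $L^{\ot 2}\op L\cdot M\op S^2M\op\Lambda^2 M$, using the raising constraint that an $S^2M$ weight vector may appear in $E_{110}'$ only together with its raising to $L\cdot M$, and then running a finite case analysis (ten types of candidate $E_{110}'$ when $q=10$, the hardest case) in which the candidates passing the $(210)$ and $(120)$ tests are eliminated by the $(111)$ test; the Flag Condition of Proposition \ref{flagbapolar} is not the workhorse here. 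Also note that Koszul flattenings are not known to reach $\frac32q+2$ for $q>2$ (the previously known bound was only $q+3$), so ``classical flattenings for small $q$'' does not cover $q=4,6$. Without an explicit case analysis of this kind, the claimed equality for $q\le 10$ is not established.
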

 
 While this is less promising than the equality $\ur(T_{cw,q})=q+2$,
   in \cite{CGLVkron} a significant drop in the border rank
 of $T_{skewcw,2}^{\boxtimes 2}=\tdet_3$ was shown, namely that it
 is $17$ rather than $25=\ur(T_{skewcw,2})^2$.  (The upper bound was shown in \cite{CGLVkron}
 and the lower bound in \cite{CHLapolar}.)
  Theorem \ref{32qupper} implies   $\ur(T_{skewcw,4} )^2=64$. The following theorem
is the largest drop in border rank under a Kronecker square that we are aware
of: 

\begin{theorem}$(*)$\label{skewcw4sect} 
  $\ur(T_{skewcw,4}^{\boxtimes 2})\leq \ur_S(T_{skewcw,4}^{\boxtimes 2}) \leq 42 $.
\end{theorem}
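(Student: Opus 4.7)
The plan is to construct an explicit border Waring decomposition of length $42$, which will prove both bounds simultaneously since $\ur \leq \ur_S$. The structural input is that, as a three-form on $\BC^5$, $T_{skewcw,4}=e_0 \wedge (e_1 \wedge e_3 + e_2 \wedge e_4)$, so it splits as $T_{skewcw,4}= \epsilon_{013} + \epsilon_{024}$, where each $\epsilon_{ijk}$ is a copy of $T_{skewcw,2}$ supported on coordinates disjoint apart from index $0$. Expanding the Kronecker square,
\[
T_{skewcw,4}^{\boxtimes 2} = \epsilon_{013}^{\boxtimes 2} + \epsilon_{024}^{\boxtimes 2} + \epsilon_{013}\boxtimes \epsilon_{024} + \epsilon_{024}\boxtimes \epsilon_{013},
\]
and each of the four pieces is a symmetric cubic form on $\BC^{25}$ (skew $\boxtimes$ skew is symmetric under permutation of the three tensor factors).

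Each diagonal piece $\epsilon_{ijk}^{\boxtimes 2}$ is a copy of $T_{skewcw,2}^{\boxtimes 2}=\tdet_3$, which has border rank $17$ by \cite{CGLVkron, CHLapolar}. So a naive budget would be $17$ per diagonal plus roughly $\ur(T_{skewcw,4})=8$ for the cross terms, totaling $\approx 42$. The two diagonal decompositions overlap along the single direction $e_0 \otimes e_0$, so a direct sum of independent decompositions wastes this overlap; one expects the successful construction to glue all four pieces through cancellations in the deformation parameter $t$, sharing low-rank limits between the $\tdet_3$ blocks and the cross pieces.

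To make the search tractable, I would exploit the stabilizer of $T_{skewcw,4}$ in $\GL(\BC^5)$, which contains $\operatorname{Sp}_4 \times \BC^\times$ acting on $\BC e_0 \oplus \BC^4$ and preserving $\omega$. This group acts diagonally on $\BC^5 \otimes \BC^5$; any candidate decomposition averages to an $\operatorname{Sp}_4$-equivariant one. The plan is then: (i) write an ansatz for the $42$ summands as a disjoint union of a small number of $\operatorname{Sp}_4$-orbits, each generated by a weight vector parameterized by a polynomial in $t$; (ii) project the desired identity onto each isotypic component of $S^3(\BC^5 \otimes \BC^5)$ under $\operatorname{Sp}_4 \times \BC^\times$, reducing to a finite polynomial system in the parameters and pole orders; (iii) solve the system, by hand where possible and otherwise with computer algebra; (iv) verify the limit by direct symbolic expansion.

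The main obstacle is step (i): choosing the orbit structure and pole pattern so that the negative powers of $t$ cancel to the required order and the finite part reproduces $T_{skewcw,4}^{\boxtimes 2}$, analogous to the explicit symmetric decomposition underlying $\ur(\tdet_3)=17$. Without the symmetry reduction the parameter space is enormous; with it, the problem becomes finite but still requires an inspired guess at the block structure. Once a correct ansatz is in place the remaining work is mechanical, and the $\operatorname{Sp}_4$-equivariant and $\FS_3$-symmetric construction automatically yields a Waring decomposition, giving the stronger bound on $\ur_S$.
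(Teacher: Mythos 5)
Your plan correctly identifies that the only route here is to exhibit an explicit border Waring decomposition of length $42$, and the structural observation $T_{skewcw,4}=\epsilon_{013}+\epsilon_{024}$ with the resulting four-term expansion of the Kronecker square is a legitimate starting point. However the proposal stops short of a proof in two essential ways.

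First, the symmetry reduction rests on a false step. You assert that ``any candidate decomposition averages to an $\operatorname{Sp}_4$-equivariant one,'' but averaging a (border) rank decomposition over a continuous non-solvable group does not produce a decomposition of the same length: the orbit of a summand under $\operatorname{Sp}_4$ is generically positive-dimensional, and the integrated object is not a finite sum of cubes. What the Fixed Ideal Lemma (Lemma~\ref{fil} in this paper) actually gives is that the \emph{ideal} attached to a border rank decomposition may be taken fixed under a \emph{Borel} subgroup of $G_T$, which is much weaker than full $\operatorname{Sp}_4$-equivariance of the summands. So imposing $\operatorname{Sp}_4$-orbit structure on the $42$ terms is at best an unjustified ansatz; the decomposition actually found in the paper shows no such orbit structure (the coefficients $z_0,\dots,z_{35}$ are irrational and not recognizably algebraic).

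Second, the budget heuristic is numerically off in a way that matters. The cross piece $\epsilon_{013}\boxtimes\epsilon_{024}$ is itself a copy of $T_{skewcw,2}\boxtimes T_{skewcw,2}=\tdet_3$ (supported in $\langle e_0,e_1,e_3\rangle\otimes\langle e_0,e_2,e_4\rangle$), with border rank $17$, not ``roughly $\ur(T_{skewcw,4})=8$.'' A naive sum over the four blocks gives $4\cdot 17=68$, so reaching $42$ requires cancellation that is much more drastic than what you describe, and you offer no mechanism guaranteeing it. The paper's actual proof sidesteps all of this: it fixes a set of \emph{tight weights} minimizing the number of nonlinear equations (reducing the system from $2925$ equations to $692$), solves it numerically by Levenberg--Marquardt, and then speculatively zeroes coefficients to get a sparse Waring decomposition of length $42$, presented to error $4.4\times 10^{-15}$. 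That is why the theorem carries the $(*)$ flag. Your proposal is a different, more representation-theoretic strategy, but as written it does not produce the decomposition, the equivariance reduction is not justified, and the overlap/gluing mechanism that would make the count close at $42$ is missing.
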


The Theorem is marked with a $(*)$ because the result is only shown to hold
numerically. The expression we give has largest error
  $4.4 \times  10^{-15}$. We could have presented  a solution to 
higher
accuracy, but we were unable to find an algebraic expression.
The new numerical techniques used to obtain this decomposition are described in
\S\ref{skewcw4sect}.
We also give a much simpler Waring  border rank $17$ expression for $\tdet_3=T_{skewcw,2}^{\boxtimes 2}$ than the one in \cite{CGLVkron}, see \S\ref{det3bis}. 

Using  Koszul flattenings (see \S\ref{kflowerbnds}) we show
 $\ur(T_{skewcw,4}^{\boxtimes 2})\geq 39$. For the cube we show
$\ur(T_{skewcw,4}^{\boxtimes 3})\geq 219$ whereas for its cousin
we have $180\leq \ur(T_{cw,4}^{\boxtimes 3})\leq 216$. 
We also prove, using Koszul flattenings,  
lower bounds for $\ur(T_{skewcw,q}^{\boxtimes 2})$    and $\ur(T_{skewcw,q}^{\boxtimes 3})$  for $q\leq 10$. 
(These results are all part of Theorem \ref{kyfbndsprop}.)

\begin{remark}
Starting with the fourth Kronecker power it is possible the border rank of
$T_{skewcw,q}^{\boxtimes 4}$ is less than that of $T_{cw,q}^{\boxtimes 4}$,
  for
    $q\in \{2, 6,8\}$.
The best possible upper bound on $\o$ obtained from some
$T_{skewcw,q}^{\boxtimes 4}$  would be $\omega\leq 2.39001322$ which could potentially be attained
with $q=6$. Starting with the fifth Kronecker power it is potentially possible to beat the current world
record for $\o$ with $T_{skewcw,q}$ and for $T_{cw,q}$ it is already possible
with the fourth power. \end{remark}

Strassen's asymptotic rank conjecture \cite{MR1341854} posits  that for all concise
tensors $T\in \BC^m\ot \BC^m\ot \BC^m$ with regular 
positive dimensional symmetry group  (called {\it tight tensors}),
$\tlim_{k\ra \infty} [\ur(T^{\boxtimes k})]^{\frac 1k}=m$. As a first step towards
this conjecture it is an important problem to determine which tensors $T$ satisfy
$\ur(T^{\boxtimes 2})<\ur(T)^2$. 
We discuss what we understand about this problem in \S\ref{lasergood}.

  A variety that parametrizes all possible
border rank decompositions of a given tensor $T$, denoted $\ul{VSP}(T)$, 
is defined in   \cite{BBapolar}. This variety
naturally sits in a product of Grassmannians, see \S\ref{vspsect} for the definition. We observe that in many examples
     $\ul{VSP}(T)$ often has a large dimension when
$\ur(T^{\boxtimes 2})<\ur(T)^2$ (although not always), and in all examples
we know of,  when $\ul{VSP}(T)$ is zero-dimensional  one also  has 
$\ur(T^{\boxtimes 2})=\ur(T)^2$. This is reflected in the following results:

\begin{theorem}  For $q>2$,  $\ul{VSP}(T_{cw,q})$ is 
a single  point.
\end{theorem}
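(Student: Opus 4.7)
The plan is to apply the border apolarity framework of \cite{BBapolar,CHLapolar} together with the Flag Condition (Proposition \ref{flagbapolar}). In this framework, a point of $\ul{VSP}(T_{cw,q})$ is encoded by a compatible triple of homogeneous ideals $(\calI^A,\calI^B,\calI^C)$, one in each of $\mathrm{Sym}^\bullet A^*$, $\mathrm{Sym}^\bullet B^*$, $\mathrm{Sym}^\bullet C^*$, with each quotient algebra of length $q+2$, subject to the apolarity condition that $T_{cw,q}$ is annihilated by the triple and the Flag Condition on the associated multiplication maps. The classical Coppersmith-Winograd border rank decomposition supplies one such triple, whose three ideals cut out a distinguished configuration of $q+2$ points in each of $\mathbb{P}A$, $\mathbb{P}B$, $\mathbb{P}C$; write this triple down explicitly as the base case.

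First I would exploit the symmetry group $G$ of $T_{cw,q}$, which contains a scaling torus, an orthogonal group $\mathrm{O}_q$ preserving the pairing $\sum_j b_j\ot c_j$ implicit in the defining expression, and the analogous orthogonal groups on the other two factors, together with a $\mathbb{Z}/3$ cyclically permuting the tensor factors. This group acts on $\ul{VSP}(T_{cw,q})$ and the standard decomposition above is a $G$-fixed point, because the underlying configuration of $q+2$ points is $G$-invariant. Hence proving the theorem reduces to showing that every admissible triple $(\calI^A,\calI^B,\calI^C)$ lies in this single $G$-orbit.

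The heart of the argument is a direct classification of admissible triples. Since $T_{cw,q}\colon A^*\to B\ot C$ is injective, the degree-one piece $(\calI^A)_1$ must be zero, so the Hilbert function of $\mathrm{Sym}^\bullet A^*/\calI^A$ is $(1,q+2,q+2,\dots)$ and $\calI^A$ begins in degree $2$. Decomposing $(\calI^A)_2\subset S^2A^*$ into $G$-isotypic components, one uses (i) the apolarity pairing with $T_{cw,q}$ to constrain which components may appear, and (ii) the Flag Condition relating $(\calI^A)_2$, $(\calI^B)_2$, $(\calI^C)_2$ through the multiplication maps to couple the three ideals together. For $q>2$ these constraints leave exactly one admissible choice of $(\calI^A)_2$, namely the one coming from the standard decomposition; the higher graded pieces are then determined inductively by the multiplication/flag compatibilities, and the symmetric argument applies to $\calI^B$ and $\calI^C$.

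The main obstacle will be the sharpness of the isotypic analysis in degree $2$: it succeeds only when $S^2A^*$ decomposes under $G$ into sufficiently many inequivalent irreducibles for the apolarity and flag constraints to isolate a unique surviving summand. This is exactly what fails when $q=2$, where $\mathrm{O}_2$ has very few irreducibles and $T_{cw,2}$ is in fact the monomial $\sum_{\sigma\in\FS_3}a_{\sigma(1)}\ot b_{\sigma(2)}\ot c_{\sigma(3)}$, with a much larger true symmetry group producing a positive-dimensional $\ul{VSP}$. For $q>2$ the representation-theoretic constraints are tight, and the theorem follows.
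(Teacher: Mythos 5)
Your overall strategy — border apolarity, the Fixed Ideal Lemma, the Flag Condition, and then an inductive determination of the higher graded pieces — is indeed the paper's strategy. But several concrete details are wrong, and one of them is wrong in a way that would sink the argument as written.

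First, and most seriously, your account of why $q=2$ behaves differently is factually incorrect. The paper's Theorem~\ref{vspcw2} shows that $\ul{VSP}(T_{cw,2})$ consists of exactly \emph{three isolated points} (one for each of $a_1\ot b_1$, $a_2\ot b_2$, $a_3\ot b_3$ in the monomial form of $T_{cw,2}$), not a positive-dimensional variety; you appear to be conflating $T_{cw,2}$ with its skew cousin $T_{skewcw,2}$, which genuinely has $\tdim\ul{VSP}\geq 8$ by Corollary~\ref{r5c3}. The actual mechanism that makes the uniqueness argument break down at $q=2$ is not that $\mathrm{O}_2$ has few irreducibles: it is that the flag condition $\BP F_2\subset\s_2(Seg(\BP A\times\BP B))$ stops eliminating alternatives. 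Since your "main obstacle" paragraph relies on this misdiagnosis to argue that the method succeeds for $q>2$, it does not actually establish the needed sharpness.

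Second, the setup is slightly off in ways that make the degree-two analysis harder than it needs to be. The object to classify is a single $\BZ^{\op 3}$-graded ideal, and the decisive degree is $(1,1,0)$, not $(2,0,0)$: one has $I_{110}^\perp=E_{110}\supset T(C^*)$ with $\tdim T(C^*)=q+1$ and $\tdim E_{110}=q+2$, so the complement $E_{110}'$ is \emph{one-dimensional}. After imposing Borel-fixedness (the Fixed Ideal Lemma), $E_{110}'$ is spanned by a weight vector, and the flag conditions $\BP F_2\subset\s_2(Seg)$, $\BP F_3\subset\s_3(Seg)$ eliminate every choice except $a_0\ot b_0$ when $q>2$. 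No isotypic decomposition of $S^2A^*$ is needed at this stage; the paper determines $E_{200}$ only afterwards, by requiring $(E_{200}\ot B)\supset E_{210}$, and then proceeds by induction on total degree. Also, for the record: the limit scheme from the Coppersmith–Winograd decomposition is a length-$(q+2)$ scheme supported entirely at $[a_0]$ (a fat point), not $q+2$ distinct points, and the Hilbert function is $(1,q+1,q+2,q+2,\dots)$, not $(1,q+2,\dots)$.

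So: right framework, right lemmas, but the explanation of why the constraints isolate a unique ideal — and of where $q>2$ is used — needs to be replaced by the one-dimensionality of $E_{110}'$ together with the explicit $F_2,F_3$ flag-condition check.
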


\begin{theorem} \label{vspcw2} $\ul{VSP}(T_{cw,2})$ 
  consists of three points.
\end{theorem}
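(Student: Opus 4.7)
The plan is to apply the border apolarity framework of \cite{BBapolar,CHLapolar} together with the Flag Condition of Proposition \ref{flagbapolar}. A point of $\ul{VSP}(T_{cw,2})$ corresponds to a triple of subspaces in appropriate Grassmannians arising from a multigraded ideal $I$ in the trigraded polynomial ring $R = \CC[\alpha_0,\alpha_1,\alpha_2,\beta_0,\beta_1,\beta_2,\gamma_0,\gamma_1,\gamma_2]$ that contains $\tann T_{cw,2}$ and such that $R/I$ has the Hilbert function of $4$ general points of $\PP A \times \PP B \times \PP C$ in the multidegrees relevant for border rank $4$. The goal is to show that exactly three such ideals exist up to the natural identifications defining $\ul{VSP}$.

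First, I would exploit the large continuous symmetry group of $T_{cw,2}$: in the basis where $T_{cw,2} = \sum_{\s\in\FS_3} a_{\s(1)}\ot b_{\s(2)}\ot c_{\s(3)}$, the diagonal scalings $a_i\mapsto\lambda_i a_i$, $b_j\mapsto\mu_j b_j$, $c_k\mapsto\nu_k c_k$ preserve $T_{cw,2}$ whenever $\lambda_{\s(1)}\mu_{\s(2)}\nu_{\s(3)}=1$ for every $\s\in\FS_3$; the solution set is a positive-dimensional torus $\mathbf{T}$, and in addition $T_{cw,2}$ is stabilized by discrete symmetries permuting basis indices and cycling the three tensor factors. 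Since $\ul{VSP}(T_{cw,2})$ is projective and $\mathbf{T}$-stable, every irreducible component contains a $\mathbf{T}$-fixed point by the Borel fixed point theorem, so the enumeration reduces to classifying multigraded monomial ideals $I\supset \tann T_{cw,2}$ with the correct Hilbert function. For each such candidate I would then apply the Flag Condition to eliminate those not arising from a compatible chain of subspaces in the trigraded Hilbert scheme, carrying out the combinatorial check directly in the spirit of \cite{CHLapolar}.

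Next, for each candidate surviving the Flag Condition, I would verify isolation by computing the $\mathbf{T}$-equivariant tangent space to the multigraded Hilbert scheme at $I$ and restricting to directions preserving the containment of $\tann T_{cw,2}$; I expect each surviving ideal to have no nontrivial deformations in the $\mathbf{T}$-weight zero piece, so each gives an isolated point of $\ul{VSP}$. A final orbit count under the discrete symmetries should leave exactly three points. One may then check that the three points are distinct by exhibiting an invariant (for instance the multigraded Hilbert function of a small shift of $I$, or the isomorphism type of $R/I$) which separates them.

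The main obstacle is the enumeration-and-filtering step: $T_{cw,2}$ sits in a very degenerate region of the border rank stratification, the same degeneracy that historically made Theorem \ref{perm316} inaccessible (see Remark \ref{perm3prev}), so many monomial ideals satisfy the gross numerical constraints, and only a careful case analysis via the Flag Condition will cut down to the three surviving configurations; computer algebra in the style of \cite{CHLapolar} is likely needed to ensure the enumeration is exhaustive. Once this filtering is carried out and isolation of each candidate is verified, the conclusion follows.
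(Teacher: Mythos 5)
Your strategy has a genuine logical gap at its core. The Borel fixed point theorem only guarantees that every irreducible component of $\ul{VSP}(T_{cw,2})$ contains a torus-fixed point; it does \emph{not} reduce the enumeration of all points of $\ul{VSP}$ to the enumeration of fixed ideals. To conclude that $\ul{VSP}$ is exactly the set of fixed points you must additionally prove that each fixed point is an isolated, zero-dimensional component, and for that you need the \emph{entire} tangent space to $\ul{VSP}$ at that ideal to vanish. Your proposed criterion --- vanishing of only the weight-zero piece of the tangent space --- is not sufficient: $\BP^1$ with the standard $\BC^*$-action has fixed points whose tangent spaces are purely of nonzero weight, yet they are not isolated. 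Moreover, computing that tangent space honestly requires controlling the relevant component (the closure of ideals of smooth schemes) of the multigraded Hilbert scheme, which is exactly the hard smoothability issue the paper flags; you defer this, together with the enumeration and the flag-condition filtering, to an unexecuted computer search, so the argument is a plan rather than a proof. Two further slips: torus-fixed ideals here are not monomial ideals, since the stabilizer torus of $T_{cw,2}$ has two-dimensional weight spaces in $A\ot B$ (e.g.\ $a_1\ot b_2$ and $a_2\ot b_1$ share a weight --- precisely the multiplicity phenomenon of Remark \ref{perm3prev}); and passing the necessary tests (Hilbert function, flag condition) does not by itself place a candidate ideal in $\ul{VSP}$ --- you must also exhibit border rank $4$ decompositions realizing the three surviving ideals.

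The paper's own proof avoids all of this machinery and, importantly, classifies \emph{all} admissible ideals, not just torus-fixed ones. Since $E_{110}$ must be $4$-dimensional and contain the $3$-dimensional space $T_{cw,2}(C^*)=\langle a_i\ot b_j+a_j\ot b_i \mid i\neq j\rangle$, it is determined by a single additional vector modulo $T_{cw,2}(C^*)$, and a direct computation with the $(210)$-test shows the only possibilities are obtained by adjoining $a_k\ot b_k$ for $k=1,2,3$. One then checks that each such $E_{110}$ forces a unique choice of $E_{200}$ and, inductively, of the components in every higher multidegree, so each of the three choices of $k$ yields exactly one ideal. No fixed-point theorem, deformation theory, or exhaustive monomial search is needed; if you want to salvage your approach you would at minimum have to replace the weight-zero isolation criterion by a full tangent space computation inside the correct Hilbert scheme component and handle the non-fixed ideals, at which point the paper's direct one-vector analysis is both shorter and stronger.
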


More precise versions of these results are given in \S\ref{vspcwq}.

 In contrast $\ul{VSP}(T_{skewcw,q})$ is positive dimensional, at least
 for all $q$ relevant for complexity theory ($q\leq 10$). 
 Explicitly,  $\ul{VSP}(T_{skewcw,2})$ is at least $8$-dimensional,
 see Corollary \ref{r5c3}, and for   $4\leq q\leq 10$, $\tdim\ul{VSP}(T_{skewcw,q})\geq \binom{q/2}2$, see Corollary \ref{vspskewcwq}.

 Border apolarity is just in its infancy. In 
 \S\ref{bapchallengess} we discuss 
 challenges  to getting better results
 with the method   and take first steps  to overcome them
 in  \S\ref{viabilitysect}.  In particular, Proposition  \ref{flagbapolar} was critical to the proof 
 of  Theorem \ref{perm316} as it enables one to substantially reduce the border
 apolarity search space in certain situations (weights occurring with multiplicities).

\subsection{Previous border rank bounds    on $T_{cw,q}^{\boxtimes k} $ and $T_{skewcw,q}^{\boxtimes k}$ } 
\begin{itemize}
\item 
$\ur(T_{cw,q}^{\boxtimes 2})=(q+2)^2$ for $q>2$ and $15\leq \ur(T_{cw,2}^{\boxtimes 2})\leq 16$.
 \cite{CGLVkron}
 
 \item 
$\ur(T_{cw,q}^{\boxtimes 3})=(q+2)^3$ for $q>4$.  
 \cite{CGLVkron}
 
 \item $\ur(T_{skewcw,2}^{\boxtimes 2})=17$. \cite{CHLapolar}
 
 \item $\ur(T_{skewcw,q})\geq q+3$.  \cite{CGLVkron}
 \item For all $q>4$ and all $k$,  $\ur(T_{cw,q}^{\boxtimes k})\geq (q+2)^3(q+1)^{k-3}$  and $\ur(T_{cw,4}^{\boxtimes k})\geq 36(5)^{k-2}$. \cite{CGLVkron}
 
 \item $\ur(T_{cw,2}^{\boxtimes k})\geq 15(3^k)$.  \cite{CGLVkron}
  
 \end{itemize}

With the exception of the proof $\ur(T_{skewcw,2}^{\boxtimes 2})\geq 17$,  which was
obtained via border apolarity, 
 these  lower bounds were obtained using Koszul flattenings.  
 
 Previous to  these
 it was shown that  $\ur(T_{cw,q}^{\boxtimes k})\geq (q+1)^k+2^k-1$
 using the border substitution method  \cite{MR3578455}.

\subsection{Definitions/Notation}\label{defs}
Throughout, $A,B,C$ will denote complex vector spaces   of dimension $m$.
We let $\{ a_i\}$ denote a basis of $A$, with either $0\leq i\leq m-1$ or
$1\leq i\leq m$ and similarly for $\{ b_j\}$ and $\{ c_k\}$. 
The dual space to $A$ is denoted $A^*$.
The $\BZ$-graded algebra of symmetric tensors  is denoted $Sym(A)=\oplus_d S^dA$,  it is also the
algebra of homogeneous polynomials on $A^*$.
  For $X\subset A$, $X^\perp:=\{\a\in A^*\mid 
\a(x)=0\forall x\in X\}$ is its annihilator, and  $\langle X\rangle\subset A$ denotes the span of $X$.  
Projective space is  $\BP A= (A\backslash \{0\})/\BC^*$, and if $x\in A\backslash \{0\}$, we let $[x]\in \BP A$ denote
the associated point in projective space (the line through $x$).
The general linear group of invertible linear maps $A\ra A$ is denoted $GL(A)$
and the special linear group of determinant one linear maps is denoted $SL(A)$. 
The permutation group on $r$ elements is denoted $\FS_r$. 

The Grassmannian of $r$ planes through
the origin is denoted $G(r,A)$, which we will view in its Pl\"ucker embedding $G(r,A)\subset \BP \La r A$.

For a set $Z\subset \BP A$, $\ol{Z}\subset \BP A$ denotes its Zariski closure,
$\hat Z\subset A$ denotes the cone over $Z$ union the origin, $I(Z)=I(\hat Z)\subset Sym(A^*)$ denotes
the ideal of $Z$,
and $\BC[\hat Z]=Sym(A^*)/I(Z)$, denotes the homogeneous coordinate ring of $\hat Z$.
Both  $I(Z)$, $\BC[\hat Z]$ are $\BZ$-graded by degree.

We will be dealing with ideals on products of three  projective spaces, that is
we will be dealing with polynomials that are homogeneous in three sets of variables,
so our ideals with be $\BZ^{\op 3}$-graded. More precisely, we will study 
ideals $I\subset Sym(A^*)\ot Sym(B^*)\ot Sym(C^*)$, and $I_{stu}$ denotes
the component in $S^sA^*\ot S^tB^*\ot S^uC^*$.

For $T\in A\ot B\ot C$, define the {\it symmetry group} of $T$,
$G_T:=\{g=(g_1,g_2,g_3)\in GL(A)\times GL(B)\times GL(C)
\mid g\cdot T=T\}$.

Given $T,T' \in A \otimes B \otimes C$, we say that $T$ \emph{degenerates} to $T'$ if $T' \in \bar{GL(A) \times GL(B) \times GL(C) \cdot T}$, the closure of the orbit of $T$, equivalently in the Euclidean or   Zariski topology.

Given $T\in A\ot B\ot C$, we may consider it as a linear map $T_C: C^*\ra A\ot B$, and we let $T(C^*)\subset A\ot B$
denote its image, and similarly for permuted statements. A tensor $T$ is {\it $A$-concise} if
the map  $T_A $ is injective, i.e., if it requires all basis vectors in  $A$ to
write down in any basis, and $T$ is {\it concise} if it is $A$, $B$, and $C$ concise. 
A tensor is {\it $1_A$-generic} if $T(A^*)\subset B\ot C$ contains an element of
maximal rank $m$.


\subsection{Acknowledgements} We thank J. Buczy\'{n}ski and J. Jelisiejew  for very 
helpful conversations and J. Buczy\'{n}ski for extensive  comments on a draft
of this article.
 
 \section{Border apolarity and the challenges it faces}
\subsection{Border apolarity}\label{bapolarsect}
Given $T\in A\ot B\ot C$ and $r$, 
{\it border apolarity} \cite{BBapolar} gives a sequence of necessary conditions for
$\ur(T)\leq r$: one successively defines a $\BZ^{\op 3}$-graded ideal
$I\subset Sym(A^*\op B^*\op C^*)$
such that each $I_{stu}\subset S^sA^*\ot S^tB^*\ot S^uC^*$ has codimension $r$ for all $s+t+u>1$.
One first requires $I_{110}\subseteq T(C^*)^\perp$, $I_{101}\subseteq T(B^*)^\perp$, 
$I_{ 011 }\subseteq T(A^*)^\perp$ and $I_{ 111 }\subset T^\perp$. 
Because $I$ is an ideal, it must contain the images of the multiplication maps $I_{s-1,t,u}\ot 
A^*\ra S^sA^*\ot S^tB^*\ot S^uC^*$,  $I_{s ,t-1,u}\ot 
B^*\ra S^sA^*\ot S^tB^*\ot S^uC^*$, 
 $I_{s ,t,u-1}\ot 
C^*\ra S^sA^*\ot S^tB^*\ot S^uC^*$,  which places conditions on the ranks of the sum  of 
the three maps,
which is called the $(stu)$-map and the rank condition the $(stu)$-test.
Write $E_{stu}=I_{stu}^\perp$. It will
be convenient to phrase the codimension tests dually:
\begin{proposition}\cite[Prop. 3.1]{CHLapolar}
The $(210)$-test is passed if and only if   
  skew-symmetrization map 
\be\label{e210map}
E_{110}\ot A \ra \La 2 A\ot B 
\ene
has kernel of dimension at least $r$. The kernel is $(E_{110}\ot A)\cap (S^2A\ot B)$. 

The $(stu)$-test is passed if and only if 
the  triple intersection
\be\label{110inter} (E_{s,t,u-1} \ot C)\cap (E_{s,t-1,u}\ot B)\cap(E_{s-1,t,u}\ot A) 
\ene
has  dimension at least $r$. 
\end{proposition}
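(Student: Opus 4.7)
The plan is to dualize the rank condition that defines the $(stu)$-map and recognize $E_{stu}$ as the annihilator of a sum of three multiplication images, which then translates into three subspace constraints coming from contraction in each of $A^*$, $B^*$, $C^*$. The intersection of these constraints is the maximal allowable $E_{stu}$, and the test is passed iff this maximum has dimension at least $r$.

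In detail, for the general $(stu)$-test with $s,t,u\geq 1$, the condition that $I_{stu}$ contains the image of $\mu_A\colon I_{s-1,t,u}\otimes A^*\to S^sA^*\otimes S^tB^*\otimes S^uC^*$ dualizes, via the identity $\langle e, f\cdot\alpha\rangle=\langle \alpha\lefthook e,f\rangle$ for $e\in E_{stu}$, $f\in I_{s-1,t,u}$, $\alpha\in A^*$ (where $\alpha\lefthook e$ is contraction in the $A$-factor), to the requirement $\alpha\lefthook e\in E_{s-1,t,u}$ for all $\alpha$. Equivalently, under the coproduct embedding $S^sA\hookrightarrow A\otimes S^{s-1}A$, we have $E_{stu}\subset A\otimes E_{s-1,t,u}$ inside $A\otimes S^{s-1}A\otimes S^tB\otimes S^uC$. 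The analogous conditions for $B$ and $C$ give the other two inclusions, and their common ambient is (say) $A^{\otimes s}\otimes B^{\otimes t}\otimes C^{\otimes u}$. The three conditions cut out the triple intersection of the proposition, and the $(stu)$-test, which asks for a choice of $E_{stu}$ of dimension exactly $r$ satisfying all three, is passed precisely when this intersection has dimension at least $r$.

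For the $(210)$-test there is no $C$-multiplication (since $u=0$), and the $B$-multiplication is $I_{200}\otimes B^*\to S^2A^*\otimes B^*$. Because $I_{200}$ is not yet constrained by earlier data in this setup (it is determined only after a free choice in $S^2A^*$, subject to its own codimension condition), the dualized $B$-constraint $E_{210}\subset E_{200}\otimes B$ can be satisfied automatically by choosing $E_{200}$ to contain the $B^*$-contraction of $E_{210}$; the only effective constraint is $E_{210}\subset A\otimes E_{110}$, intersected with the subspace $S^2A\otimes B\subset A\otimes A\otimes B$. Since $S^2A=\ker(A\otimes A\to \Lambda^2 A)$, this intersection is precisely the kernel of the skew-symmetrization $A\otimes E_{110}\hookrightarrow A\otimes A\otimes B\twoheadrightarrow \Lambda^2A\otimes B$, which after identifying $A\otimes E_{110}\cong E_{110}\otimes A$ is the stated map.

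The main obstacle is bookkeeping: precisely identifying the various coproduct embeddings and the common ambient space in which the three subspaces of the triple intersection are compared, and justifying rigorously that the $(210)$-specific $B$-constraint imposes no extra restriction. Beyond those identifications, everything reduces to routine linear algebra.
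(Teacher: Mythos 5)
Your dualization is the right argument and is essentially the standard one (the paper itself gives no proof, quoting \cite{CHLapolar}): the annihilator of the image of $I_{s-1,t,u}\otimes A^*\to S^sA^*\otimes S^tB^*\otimes S^uC^*$ inside $S^sA\otimes S^tB\otimes S^uC$ consists of the elements all of whose $A^*$-contractions lie in $E_{s-1,t,u}$, the annihilator of a sum of images is the intersection of the annihilators, and ``image of codimension at least $r$'' is ``annihilator of dimension at least $r$.'' Your bookkeeping worry in the general case is harmless: for $s,t,u\geq 1$ each symmetrization is imposed by one of the other two factors (e.g.\ $E_{s,t-1,u}\otimes B$ and $E_{s,t,u-1}\otimes C$ both lie in $S^sA\otimes(\cdots)$), so the triple intersection automatically sits in $S^sA\otimes S^tB\otimes S^uC$ and coincides with the intersection of the three annihilators; likewise $(E_{110}\otimes A)\cap(S^2A\otimes B)$ is exactly the kernel of the skew-symmetrization, as you say.

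The gap is in how you dispose of the degree-$(200)$ multiplication. You claim the constraint can be ``satisfied automatically by choosing $E_{200}$ to contain the $B^*$-contraction of $E_{210}$,'' but $E_{200}$ is not free: the Hilbert function requirement forces $\dim E_{200}=\min(r,\dim S^2A)$, and when $\dim S^2A>r$ the span of the contractions of an $r$-dimensional $E_{210}\subset$ (kernel) by elements of $B^*$ can have dimension larger than $r$, in which case no admissible $E_{200}$ contains it. Concretely, for $E_{110}=A\otimes b_1+\langle a_1\rangle\otimes b_2$ with $r=m+1$ and $m\geq 3$, the kernel is $S^2A\otimes b_1\oplus\langle a_1^2\rangle\otimes b_2$, and the choice $E_{210}=\langle f_1\otimes b_1,\dots,f_m\otimes b_1,\,f_{m+1}\otimes b_1+a_1^2\otimes b_2\rangle$ with $f_1,\dots,f_{m+1},a_1^2$ linearly independent has $B^*$-contractions spanning $r+1$ dimensions, so your recipe fails for it even though the kernel condition holds. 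To make ``the only effective constraint is $E_{210}\subseteq A\otimes E_{110}$'' rigorous you must either prove that whenever the kernel has dimension at least $r$ there exists an $r$-dimensional $E_{210}$ inside it whose $B^*$-contractions span at most $\min(r,\dim S^2A)$ dimensions (a real statement, not automatic), or adopt the reading under which the $(210)$-test is by definition the rank condition on the map $I_{110}\otimes A^*\to S^2A^*\otimes B^*$ alone, $I_{200}$ not being part of the data fixed at this stage; that is how the test is actually used in \cite{CHLapolar} and here, and then the equivalence is immediate duality with no discussion of $E_{200}$ needed. (For the lower-bound applications only the necessity direction matters, and that part of your argument is fine.)
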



We will make repeated use of the following lemma:

\begin{lemma}[Fixed ideal Lemma  \cite{BBapolar}] \label{fil}
If $T$ has symmetry group $G_T$, then the ideal above may be required to be fixed under the action
of a Borel subgroup of $G_T$ which we will denote $\BB_T$. In particular, if $G_T$ contains a torus, it may be 
required to be fixed
under the action of the torus.
\end{lemma}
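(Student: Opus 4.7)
The plan is to invoke Borel's fixed point theorem on a projective parameter space of candidate ideals.

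First I would identify this parameter space. A $\BZ^{\op 3}$-graded ideal $I\subset \Sym(A^*\op B^*\op C^*)$ of the type used in border apolarity is determined by its truncation in finitely many graded pieces (once the Hilbert function of $R/I$ stabilizes at $r$, the Castelnuovo--Mumford regularity is bounded; alternatively one appeals to the Haiman--Sturmfels multigraded Hilbert scheme). The subset of such ideals for which each $I_{stu}$ has the prescribed codimension $r$ is parametrized by a closed subscheme of a finite product of Grassmannians $\prod_{s,t,u} G(r, S^s A^*\ot S^t B^*\ot S^u C^*)$. The conditions that these subspaces assemble into a genuine ideal (rank bounds on the $(stu)$-maps) and that it is border-apolar to $T$ (namely $I_{110}\subseteq T(C^*)^\perp$, $I_{101}\subseteq T(B^*)^\perp$, $I_{011}\subseteq T(A^*)^\perp$, and $I_{111}\subseteq T^\perp$) are all Zariski-closed. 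Thus the space $\mathcal{V}_T$ of all candidates is a projective variety, and it is nonempty precisely when $\ur(T)\leq r$.

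Second I would observe that $G_T$ acts on $\mathcal{V}_T$. The group $\GL(A)\times \GL(B)\times \GL(C)$ acts on each Grassmannian in the product, and the multiplication-map/ideal conditions are equivariant. The apolarity conditions involve $T$, but since $G_T$ fixes $T$ by definition, these conditions are also $G_T$-equivariant, so $\mathcal{V}_T$ is preserved under $G_T$.

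Third, I would apply Borel's fixed point theorem: any connected solvable algebraic group acting on a nonempty projective variety has a fixed point. Taking $\BB_T$ to be a Borel subgroup of the identity component of $G_T$, we obtain a $\BB_T$-fixed candidate ideal, proving the lemma; when $G_T$ contains a torus, the same argument with the torus in place of the Borel yields a torus-fixed ideal, giving the second assertion. The main subtlety is in the first step: rigorously setting up $\mathcal{V}_T$ as a projective scheme requires either bounding the regularity of the ideals under consideration or invoking the multigraded Hilbert scheme machinery. Once that is in place, the remaining equivariance and fixed-point arguments are essentially formal.
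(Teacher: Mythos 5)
Your overall strategy --- realize the relevant ideals as points of a projective parameter space on which $G_T$ acts and invoke Borel's fixed point theorem for a connected solvable group --- is exactly the mechanism behind this lemma (the paper does not reprove it; it quotes \cite{BBapolar}, where the argument runs through the Haiman--Sturmfels multigraded Hilbert scheme). But as written there is a genuine gap in how you choose the parameter space. You take $\mathcal{V}_T$ to be the space of \emph{all} $\BZ^{\oplus 3}$-graded ideals with codimension-$r$ graded pieces annihilating $T$, and you claim it is nonempty precisely when $\ur(T)\leq r$. That biconditional is false: these conditions are only necessary, and this is exactly the point of \S\ref{badnews} and the cactus-rank discussion in \S\ref{bapchallengess} --- the $(stu)$-conditions are satisfied by limits of ideals of zero-dimensional schemes, and such candidate ideals exist for $r$ far below the border rank (the cactus variety already fills the ambient space at $r=6m-4$). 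Only the forward implication holds, and even it needs a word: from a border rank decomposition one must manufacture an ideal with each piece of codimension \emph{exactly} $r$, by choosing codimension-$r$ subspaces inside the ideals of the $r$ moving points compatibly and taking limits in the Grassmannians (this is done in \cite{BBapolar}).

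The more serious consequence is that a Borel fixed point of your $\mathcal{V}_T$ may be one of these spurious candidates. Your argument therefore proves only that \emph{some} $\BB_T$-fixed candidate ideal exists, not that ``the ideal above'' --- an ideal arising from an actual border rank decomposition --- may be taken $\BB_T$-fixed. The stronger statement is what the paper needs: it is what feeds the viability and flag conditions of Proposition~\ref{flagbapolar} and the definition of $\ul{VSP}$ in \S\ref{vspsect}, all of which concern ideals coming from genuine decompositions. To get it, run the identical equivariance-plus-fixed-point argument on the smaller locus $\{I\in\operatorname{Slip}_r : I\subseteq \operatorname{Ann}(T)\}=\ul{VSP}(T,r)$, where $\operatorname{Slip}_r$ is the closure, inside the multigraded Hilbert scheme (projective because the grading is positive), of the locus of saturated ideals of $r$ distinct points of the Segre variety. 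The substantive inputs, all from \cite{BBapolar}, are: this locus is closed, hence projective; it is preserved by $G_T$ (since $GL(A)\times GL(B)\times GL(C)$ preserves $\operatorname{Slip}_r$ and $G_T$ fixes $T$, hence its annihilator); and it is nonempty if and only if $\ur(T)\leq r$ --- this last equivalence is the main theorem of border apolarity, not a formality. With that replacement your second and third steps go through verbatim and yield the lemma, including the torus statement.
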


Border apolarity provides both lower bounds and a guide to proving upper bounds.
For example, the $(111)$ space for $T_{skewcw,q}$ described in
the proof of  Theorem \ref{32qupper} hints at the formula \eqref{brskewcw}, where
the terms linear in $t$ appear in the $(111)$ space.

\subsection{Challenges facing border apolarity}\label{bapchallengess}
In modern algebraic geometry the study of geometric objects (algebraic varieties)
is replaced by the study of the  ideal of  polynomials that vanish on a variety.
The study of a set of $r$ points $\{z_1\hd z_r\}$
 in affine space $\BC^N$ is replaced by the study of
its ideal, more precisely the quotient 
$\BC[x_1\hd x_N]/I_{z_1\sqcup \cdots \sqcup z_r}$ where $\BC[x_1\hd x_N]$ is the ideal of all polynomials
on $\BC^N$ and $I_{z_1\sqcup \cdots \sqcup z_r}$ is the ideal.
Note that ring  $\BC[x_1\hd x_N]/I_{z_1\sqcup \cdots \sqcup z_r}$
is a vector space of dimension $r$, called the coordinate ring of the variety.
(In our
case we will be concerned with $r$ points on the Segre variety
$Seg(\BP A\times \BP B \times \BP C)$ but the issues about
to be discussed are local and there is no danger working in affine space.)
The study becomes one of such rings, and one no longer requires them
to correspond to ideals of points, only that the vector space has dimension
$r$ and that the ideal is saturated. Such are called {\it zero dimensional schemes
of length $r$}. If the ideal corresponds to $r$ distinct points one says the scheme
is {\it smooth}.   A central challenge of border apolarity as a tool
in the study of border rank, is that applied na\"\i vely, it only determines
necessary conditions for an ideal to be  the limit of a sequence of such ideals.
  One could split the problem
of detecting non-border rank ideals into two: first, just get
rid of the   ideals that are not limits of ideals
of zero dimensional schemes,  then, given an ideal that is a limit of
  ideals of zero dimensional schemes of length $r$, determine if
  it is a limit of ideals of smooth schemes ({\it smoothability} conditions). In this paper we address the first problem
and the new additional necessary conditions we obtain (Proposition \ref{flagbapolar}) are enough
to enable us to determine   $\ur(T_{cw,2}^{\boxtimes 2})$ via border apolarity.
In \S\ref{badnews} we show that     ideals that fail to deform to saturated
ideals  occur already for quite low border rank.
 The second problem is ongoing work with J. Buczy\'{n}ski and his group in Warsaw.  

The second problem is a serious issue:
The {\it cactus   rank}  \cite{BBapolar,MR3121848}  of a tensor $T$   
is the smallest $r$ such that $T$ lies in the span of a zero dimensional scheme
of length $r$ supported on the Segre variety. The cactus border rank of $T$, $\ul{\bold{CR}}(T)$ is
the smallest $r$ such that $T$ is a limit of tensors of cactus rank $r$.
One has $\ur(T)\geq \ul{\bold{CR}}(T)$ and for almost all  tensors the inequality is strict.
The $(stu)$ tests are   tests for   cactus border rank.
Cactus border rank is not known to be relevant for complexity theory, thus the failure
of current border apolarity technology to distinguish between them is a barrier to future progress.
Moreover, the cactus variety fills the ambient space at latest
border rank  $6m-4$, see 
\cite[Ex. 6.2 case $k=3$]{gazka2016multigraded}.

\subsection{Viability and the flag conditions}\label{viabilitysect}

We begin in the general context of secant varieties with a preliminary observation:

For a projective variety $X\subset \pp N$, define its variety of
secant $\pp{r-1}$'s,
$$
\s_r(X):=\ol{ \bigcup_{x_1\hd x_r\in X} \langle x_1\hd x_r\rangle }.
$$

\begin{proposition} Let $X\subset \BP V$ be a projective
variety and let $\BP E\subset \s_r(X)$ be a $\pp{r-1}$ 
arising from a border rank $r$ decomposition of a point on $\s_r(X)$.
Then there exists a complete flag $E_1\subset E_2\subset \cdots
\subset E_r=E$ such that for all $1\leq j\leq r$,  $\BP E_j\subset \s_j(X)$.
\end{proposition}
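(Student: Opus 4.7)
The plan is to construct the flag by taking partial sums of the chosen border rank decomposition and passing to limits in the Grassmannians $G(j,V)$. Unpacking the definition, a border rank $r$ decomposition of $p$ is a tuple of analytic arcs $x_1(t),\ldots,x_r(t)\in\hat X$ (defined for $t$ in a punctured neighborhood of $0$) with $\lim_{t\to 0}\sum_i x_i(t)=p$, and $E\in G(r,V)$ is the limit $\lim_{t\to 0}\sspan\{x_1(t),\ldots,x_r(t)\}$. Since $\dim E=r$, the vectors $x_1(t),\ldots,x_r(t)$ must be linearly independent on some punctured neighborhood of $0$; otherwise the Plücker coordinates of the span would be identically zero and the limit in $G(r,V)$ could not be $r$-dimensional.

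For each $1\leq j\leq r$, set $E_j(t):=\sspan\{x_1(t),\ldots,x_j(t)\}$. Linear independence gives $\dim E_j(t)=j$, so $E_j(t)\in G(j,V)$, and because $\BP E_j(t)$ is spanned by $j$ points of $X$, it lies in $\s_j(X)$. As $G(j,V)$ is a projective variety, the arc $t\mapsto E_j(t)$ extends uniquely across $t=0$; define $E_j:=\lim_{t\to 0}E_j(t)\in G(j,V)$. Closedness of $\s_j(X)$ immediately yields $\BP E_j\subset \s_j(X)$.

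It remains to assemble the flag. The incidence variety $\{(F,F')\in G(j,V)\times G(j+1,V):F\subset F'\}$ is closed, and $E_j(t)\subset E_{j+1}(t)$ for every small $t\neq 0$, so passing to the limit gives $E_j\subset E_{j+1}$. By uniqueness of limits $E_r=E$, yielding the required complete flag $E_1\subset E_2\subset\cdots\subset E_r=E$ with $\BP E_j\subset\s_j(X)$ for every $j$.

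The argument is essentially soft---the only ingredients are properness of the Grassmannians and closedness of the secant varieties. The one point requiring attention, and thus the main (mild) obstacle, is the hypothesis $\dim E=r$: this is exactly what guarantees that the arcs $x_i(t)$ remain linearly independent away from $t=0$, so that each $E_j(t)$ genuinely defines a point of $G(j,V)$ and the flag structure survives in the limit.
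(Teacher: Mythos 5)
Your proof is correct and follows exactly the paper's argument: write $E$ as a limit of spans $\langle x_1(t),\dots,x_r(t)\rangle$ in $G(r,V)$ and take $E_j$ to be the limit of the partial spans in $G(j,V)$. You have merely spelled out the (routine) details the paper leaves implicit — linear independence of the $x_i(t)$ for $t\neq 0$, properness of the Grassmannian, closedness of $\sigma_j(X)$, and the incidence-variety argument for the flag inclusions.
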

\begin{proof}
We may write
$E=\tlim_{t\ra 0}\langle x_1(t)\hd x_r(t)\rangle$ where $x_j(t)\in X$ and the limit
is taken in the Grassmannian $G(r,V)$ (in particular, for all $t\neq 0$ we
may assume $x_1(t)\hd x_r(t)$ are linearly independent).
Then take $E_j=\tlim_{t\ra 0} \langle x_1(t)\hd x_j(t)\rangle$
 where the limit
is taken in the Grassmannian $G(j,V)$.
\end{proof}

Let  $T\in A\ot B\ot C$ and let   $E_{stu}$ be   an $r$-dimensional space
 that is  $I_{stu}^\perp$ for a   multi-graded ideal that passes all border apolarity
 tests  up to total degree $s+t+u+1$.

  Call such an ideal, or an $E_{stu}$,  {\it viable} if it arises from 
an actual border rank decomposition.
This implies  $\BP E_{stu}\subset
\s_r( Seg(v_s(\BP A)\times v_t(\BP  B)\times v_u( \BP  C)))$.
Here  $v_s: \BP A\ra \BP (S^sA)$ is the Veronese re-embedding,
$v_s([a])=[a^s]$.

To a $\ccc$-dimensional subspace    $E\subset A\ot B$, one may associate
a tensor $T\in A\ot B\ot \BC^\ccc$, well-defined up to isomorphism, such that
$T(\BC^{\ccc *})=E$. Much of the lower bound literature exploits this correspondence to
reduce questions about tensors to questions about linear subspaces of spaces of matrices.
(This idea appears already in \cite{Strassen505}.)
The following proposition exploits this dictionary to obtain new conditions for viability of
candidate $E_{stu}$'s:

\begin{proposition}\label{flagbapolar}[Flag conditions]
 If $E_{110}$ is viable, then there exists a $\BB_T$-fixed filtration
 of $E_{110}$,  $F_1\subset F_2\subset \cdots \subset F_r=E_{110}$,
 such that $F_j\subset \s_j(Seg(\BP A\times \BP B))$.
 Let $T_j\in A\ot B \ot \BC^j$ be a tensor equivalent to the subspace $F_j$.
 Then $\ur(T_j)\leq j$.

 Similarly, if   $E_{stu}$ is viable,  there are complete flags in $A,B,C$ such that $\BP (E_{stu}\cap S^sA_j\ot S^tB_j\ot S^uC_j)
  \subset  \s_j(Seg(v_s(\BP A)\times v_t(\BP   B)\times v_u( \BP  C)))$ for all $1\leq j\leq m$.
 \end{proposition}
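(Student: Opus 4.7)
The plan is to bootstrap off the preliminary proposition on secant flags, invoke Borel's fixed point theorem to upgrade the flag in $E_{110}$ to a $\BB_T$-fixed one, and for the second assertion build flags in $A,B,C$ directly from the curves of a border rank decomposition.

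For the first assertion, viability supplies curves $a_i(t)\in A$, $b_i(t)\in B$, $c_i(t)\in C$ with $T = \lim_{t\to 0}\sum_{i=1}^r a_i(t)\ot b_i(t)\ot c_i(t)$, and the border apolarity construction realizes $E_{110} = \lim_{t\to 0}\langle a_i(t)\ot b_i(t) : i = 1,\ldots,r\rangle$ inside $G(r, A\ot B)$. Applying the preliminary proposition with $X = Seg(\BP A\times\BP B)$ yields a complete flag $F_1\subset\cdots\subset F_r = E_{110}$ with $\BP F_j \subset \sigma_j(X)$; concretely $F_j = \lim_{t\to 0}\langle a_i(t)\ot b_i(t) : i\leq j\rangle$. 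To upgrade to a $\BB_T$-fixed flag, consider inside the complete flag variety $\mathrm{Fl}(E_{110})$ the closed subvariety
\[
\cF_{good} := \{F_\bullet\mid \BP F_j \subset \sigma_j(Seg(\BP A \times \BP B))\text{ for all }j\}.
\]
By the Fixed Ideal Lemma $\BB_T$ stabilizes $E_{110}$, and each $\sigma_j(X)$ is $GL(A)\times GL(B)$-invariant, hence $\BB_T$-invariant; so $\cF_{good}$ is a nonempty $\BB_T$-invariant closed subvariety of a projective variety, and Borel's fixed point theorem produces a $\BB_T$-fixed flag in $\cF_{good}$. The construction of $F_j$ as a Grassmannian limit of $j$-dimensional subspaces spanned by rank-one matrices translates directly into $\uR(T_j)\leq j$ for any tensor $T_j\in A\ot B\ot\BC^j$ whose image in $A\ot B$ is $F_j$.

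For the second assertion, use the viability curves to define flags
\[
A_j := \lim_{t\to 0}\langle a_1(t),\ldots,a_j(t)\rangle \in G(\min(j,m),A),
\]
and analogously $B_j, C_j$, extending to complete flags once they saturate. Writing $E_{stu}(t) := \langle a_i(t)^s\ot b_i(t)^t\ot c_i(t)^u : i = 1,\ldots,r\rangle$ and $\Gamma_j(t) := S^sA_j(t)\ot S^tB_j(t)\ot S^uC_j(t)$, for generic $t$ the intersection $E_{stu}(t)\cap\Gamma_j(t)$ is spanned by exactly the first $j$ Veronese images $a_i(t)^s\ot b_i(t)^t\ot c_i(t)^u$ with $i\leq j$, hence lies in $\sigma_j(Seg(v_s(\BP A)\times v_t(\BP B)\times v_u(\BP C)))$. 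Passing to limits in the Grassmannian, the limit intersection is contained both in $E_{stu}\cap(S^sA_j\ot S^tB_j\ot S^uC_j)$ and, by closedness, in $\sigma_j$.

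The main obstacle is completing the argument of the second part: to deduce the stated containment for the full intersection $E_{stu}\cap(S^sA_j\ot S^tB_j\ot S^uC_j)$, one must upgrade
\[
\lim_{t\to 0}\bigl[E_{stu}(t)\cap\Gamma_j(t)\bigr] \ \subseteq\ E_{stu}\cap\Gamma_j(0)
\]
to equality, so that every element of the right-hand side inherits membership in $\sigma_j$ from the generic-$t$ picture. This is the classical asymmetry $\lim_t(X(t)\cap Y(t))\subseteq\lim_t X(t)\cap\lim_t Y(t)$, and bridging the gap generally requires either a flatness argument for the family of intersections over the parameter curve, or a careful generic choice of the border rank decomposition that keeps the intersection equidimensional in a neighborhood of $t=0$.
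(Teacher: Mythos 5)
Your first assertion is essentially correct but follows a different route than the paper. You run the curve construction of the preliminary secant-flag proposition directly (taking $F_j=\lim_{t\to 0}\langle a_i(t)\ot b_i(t)\,:\,i\le j\rangle$) and then apply Borel's fixed point theorem, whereas the paper sets $\hat C=C\op\BC^{r-m}$, associates to $E_{110}$ an auxiliary tensor $\hat T\in A\ot B\ot\hat C$ with $\hat T(\hat C^*)=E_{110}$ and $\ur(\hat T)\le r$, and quotes the flag condition for concise tensors of minimal border rank \cite[Cor.~2.3]{MR3682743} to produce $F_k=\hat T(C_k)$, handling Borel-fixedness ``as in the Fixed Ideal Lemma'' --- i.e., by the same fixed-point argument you spell out. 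Your version avoids the citation at the price of reproving it; both work. One repair is needed in your Borel step: you define $\mathcal{F}_{good}$ only by the conditions $\BP F_j\subset\sigma_j(Seg(\BP A\times\BP B))$ and then claim $\ur(T_j)\le j$ from the curve construction, but the $\BB_T$-fixed flag produced by the fixed point theorem is a different flag and a priori satisfies only the $\sigma_j$-containments, which are strictly weaker than $\ur(T_j)\le j$ (a $j$-plane of matrices all of rank $\le j$ need not be a limit of spans of $j$ rank-one matrices). The fix is immediate: define $\mathcal{F}_{good}$ by the closed, $GL(A)\times GL(B)$-invariant condition that $F_j$ lie in the closure in $G(j,A\ot B)$ of spans of $j$ points of the Segre (equivalently $\ur(T_j)\le j$); your curve flag lies in this locus, so it is nonempty and the same argument applies.

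The genuine gap is in the second assertion, and it is not only the limit-interchange you flag. Your intermediate claim that for generic $t$ the intersection $E_{stu}(t)\cap\Gamma_j(t)$ is spanned by exactly the first $j$ Veronese images $a_i(t)^s\ot b_i(t)^t\ot c_i(t)^u$, $i\le j$, is unjustified and in general false: nothing prevents $a_i(t)$ with $i>j$ from lying in $A_j(t)$ (likewise for $B,C$), nor a combination of all $r$ points from landing in $\Gamma_j(t)$ by cancellation. Indeed at $j=m$ one has $\Gamma_m(t)=S^sA\ot S^tB\ot S^uC$, so the intersection is all of $E_{stu}(t)$, spanned by all $r$ points; your argument there yields only containment in $\sigma_r$, not $\sigma_j$, so the claimed generic-$t$ description cannot be right as stated. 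On top of this, even where that description does hold, you correctly note that $\lim_t\bigl(E_{stu}(t)\cap\Gamma_j(t)\bigr)\subseteq E_{stu}\cap\Gamma_j(0)$ can be strict (intersection dimension jumps in the limit) and you leave that unresolved, so the stated containment for the full intersection $E_{stu}\cap(S^sA_j\ot S^tB_j\ot S^uC_j)$ is not established. For comparison, the paper dispatches this part with a single sentence (``follows from the preceding discussion''), so there is no detailed argument to match yours against; but as written your part two does not prove the assertion, and any proof along your lines must control both which points (and cancellations) can enter the intersection for $t\ne0$ and the exchange of limit and intersection at $t=0$.
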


 \begin{proof}
Set $\hat C=C\op \BC^{r-m}$. Then there exists 
  $\hat T\in A\ot B\ot \hat C$ such that $\hat T(\hat C)=E_{110}$ and $\ur(\hat T)\leq r$. 
In this case the flag condition  \cite[Cor. 2.3]{MR3682743} implies  that
since  $\hat T\in A\ot B\ot \hat C= \BC^m\ot \BC^m\ot \BC^r$ with $r\geq m$ is concise of
  minimal border rank $r$,  there exists a  complete  flag 
     $C_1\subset C_2\subset \cdots \subset C_r =\hat C^{*}$ such
  that $\hat T(C_k)\subset \s_k(Seg(\BP A\times \BP B))$. Take $F_k=\hat T(C_k)$.
The proof that the flag may be taken to be Borel fixed is the same as in the
Fixed ideal lemma. 

The second assertion follows from the preceding discussion.
  \end{proof}
  
 Proposition \ref{flagbapolar} provides  additional conditions $E_{stu}$ must satisfy for viability  beyond  the border apolarity tests.
It  allows one to utilize the known conditions for minimal border rank in a non-minimal border rank setting.

 When $T_j$ is concise, Proposition \ref{flagbapolar} is quite
 useful as there are many 
  known conditions for concise tensors to be of minimal border rank. In particular it must have symmetry Lie algebra
 of dimension at least $2j-2$ and if it is $1_{\BC^j}$-generic (for any of the factors), it must
 satisfy the End-closed condition (see \cite{MR3682743}).

 \begin{remark} Proposition \ref{flagbapolar} also applies to cactus border rank decompositions,
 so it is a \lq\lq non-deformable to saturated\rq\rq\  removal  condition rather than a smoothability one.
 \end{remark}

By the classification of tensors of border rank at most three \cite[Thm. 1.2(iv)]{MR3239293} the possibilities for the first two filtrands of $E_{110}$ are
$F_1=\langle a\ot b\rangle$,
$F_{2a}=\langle a\ot b, a'\ot b'\rangle$ or $F_{2b}=\langle a\ot b, a\ot b'+a'\ot b\rangle$
corresponding to either two distinct rank one points or a rank one point and a tangent vector,
and there are five possibilities for $F_3$: 
\begin{enumerate}
\item 
$F_{3aa}=\langle a\ot b, a'\ot b', a''\ot b''\rangle$ (three distinct points) 
\item  $F_{3aab}=\langle a\ot b, a\ot b'+a'\ot b,   a''\ot b''\rangle$
(two points plus a tangent vector to one of them) 
\item  $F_{3bc}=\langle a\ot b, a\ot b'+a'\ot b,   a''\ot b +a'\ot b'+ a\ot b''\rangle$ (points of the
form $x(0),x'(0),x''(0)$ for a curve $x(t)\subset Seg(\BP A\times \BP B)$)
\item $F_{3abb}=\langle a\ot b, a\ot b'+a'\ot b, a\ot b''+a''\ot b\rangle$ (point plus two tangent vectors)
\item  $F_{3bd}=\langle a\ot b, a \ot b', a'\ot b +a''\ot b'+a\ot b''\rangle$ (sum of tangent vectors to two colinear
points $x'+y'$) or its mirror
$F_{3bd}=\langle a\ot b, a' \ot b , a \ot b' +a'\ot b''+a''\ot b \rangle$.
\end{enumerate}

 The space $E_{110}$ contains a distinguished subspace $T(C^*)$.
 Write $E_{110}'$ for a choice of a   complement to $T(C^*)$ in $E_{110}$.

\begin{corollary} \label{via110}  If $E_{110}$ is viable and $\BP T(C^*)\cap \s_k(Seg(\BP A\times \BP B))=\emptyset$, then 
there exists a choice of  $E_{110}'$ such that $ F_k \subset  E_{110}'$.
\end{corollary}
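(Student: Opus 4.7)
The plan is to combine the Flag condition from Proposition \ref{flagbapolar} with elementary linear algebra; the corollary is essentially immediate once the projective disjointness hypothesis is translated into a linear statement about subspaces of $A \otimes B$.

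First, since $E_{110}$ is viable, Proposition \ref{flagbapolar} supplies a filtration $F_1 \subset F_2 \subset \cdots \subset F_r = E_{110}$ with $F_j \subset \s_j(Seg(\BP A \times \BP B))$; in particular $\BP F_k \subset \s_k(Seg(\BP A \times \BP B))$. The hypothesis $\BP T(C^*) \cap \s_k(Seg(\BP A \times \BP B)) = \emptyset$ then forces the linear intersection $T(C^*) \cap F_k$ to be zero: a nonzero $v$ in the intersection would give a point $[v] \in \BP T(C^*) \cap \BP F_k$, contradicting the assumed disjointness.

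With $T(C^*) \cap F_k = 0$ established, the construction of $E_{110}'$ is a standard basis-extension argument: the composition $F_k \hookrightarrow E_{110} \twoheadrightarrow E_{110}/T(C^*)$ is injective, so extend the image of $F_k$ to a basis of the quotient and lift to obtain a subspace $E_{110}' \subset E_{110}$ containing $F_k$ with $E_{110} = T(C^*) \oplus E_{110}'$. There is no substantive obstacle; the real work is already in Proposition \ref{flagbapolar}, and this corollary serves as a bookkeeping lemma that ensures the complement can be chosen to respect the flag, which will be convenient when searching for viable $E_{110}$'s filtrand by filtrand in the later arguments.
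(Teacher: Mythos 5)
Your proof is correct and follows essentially the same route as the paper: the flag condition of Proposition \ref{flagbapolar} plus the disjointness hypothesis forces $F_k\cap T(C^*)=0$, after which a complement $E_{110}'$ to $T(C^*)$ in $E_{110}$ containing $F_k$ exists by elementary linear algebra. The paper merely phrases this as a one-line contradiction (a nonzero $M\in F_k\cap T(C^*)$ would give a point of $\BP T(C^*)\cap \s_k(Seg(\BP A\times\BP B))$), while you spell out the basis-extension step directly.
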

\begin{proof} Say otherwise, then there exists  $M\in  F_k \cap T(C^*)$. This contradicts
 $T(C^*)\cap \s_k(Seg(\BP A\times \BP B))=\emptyset$.
 \end{proof}

\begin{corollary}\label{emptycor} If $\BP T(C^*)\cap \s_q(Seg(\BP A\times \BP B))=\emptyset$, then $\ur(T)\geq m+q$.
\end{corollary}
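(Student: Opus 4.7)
The plan is to read this off directly from Corollary \ref{via110}. Suppose $\ur(T)=r$, so that some genuine border rank $r$ decomposition of $T$ produces a viable $r$-dimensional space $E_{110}\subset A\ot B$ that contains the image $T(C^*)$. Assuming $T$ is $C$-concise (else pass to the concise restriction and obtain the correspondingly sharper bound), $\tdim T(C^*)=m$.

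Next I would apply Corollary \ref{via110} with $k=q$: the hypothesis $\BP T(C^*)\cap \s_q(Seg(\BP A\times \BP B))=\emptyset$ is exactly what is needed there, and it gives a choice of complement $E_{110}'$ to $T(C^*)$ in $E_{110}$ such that $F_q\subset E_{110}'$. By construction of the flag, $\tdim F_q=q$, and $F_q\cap T(C^*)=0$ since $F_q$ lies in the complement.

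Combining, $E_{110}$ contains the direct sum $T(C^*)\op F_q$, whose dimension is $m+q$. Therefore
\[
r=\tdim E_{110}\geq \tdim T(C^*)+\tdim F_q=m+q,
\]
which is the desired bound. There is essentially no obstacle: the content is entirely in Proposition \ref{flagbapolar}/Corollary \ref{via110}; the only point requiring a word of care is noting that any $E_{110}$ arising from a border rank decomposition automatically contains $T(C^*)$, so the $q$-dimensional flag piece $F_q$ has to sit transversely to it under the hypothesis.
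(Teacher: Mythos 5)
Your argument is correct and is exactly the intended one: the paper states this corollary without proof as an immediate consequence of Corollary \ref{via110} (equivalently, the flag condition of Proposition \ref{flagbapolar}), namely that a viable $E_{110}$ of dimension $r=\ur(T)$ contains $T(C^*)$ and, since $F_q\cap T(C^*)=0$ under the hypothesis, also a transverse $q$-dimensional filtrand, forcing $r\geq \tdim T(C^*)+q$. The only caveat, which you already flag, is that one needs $C$-conciseness to identify $\tdim T(C^*)$ with $m$ (implicit in the paper's setting); without it the conclusion is the weaker bound $\ur(T)\geq \trank(T_C)+q$, not a sharper one.
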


Although we have stronger lower bounds, 
Corollary \ref{emptycor} provides the following \lq\lq for free\rq\rq :

\begin{corollary}\label{cwscwk}
For all $k$,  $\ur(T_{cw,2}^{\boxtimes k})\geq 3^k+2^k$ and $\ur(T_{skewcw,2}^{\boxtimes k})\geq 3^k+2^k$.
\end{corollary}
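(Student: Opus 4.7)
The plan is to deduce both lower bounds as a direct application of Corollary~\ref{emptycor}. For either $T=T_{cw,2}^{\boxtimes k}$ or $T=T_{skewcw,2}^{\boxtimes k}$, concision in $\bbC^{3^k}\otimes\bbC^{3^k}\otimes\bbC^{3^k}$ gives $m=3^k$; taking $q=2^k$ in Corollary~\ref{emptycor} then yields the claimed bound $\ur(T)\ge 3^k+2^k$ provided the hypothesis
\[
\bbP\, T(C^*)\cap\sigma_{2^k}(\mathrm{Seg}(\bbP A\times\bbP B))=\emptyset
\]
is verified. Equivalently, the task reduces to showing that every nonzero $C$-slice of $T$, viewed as a matrix in $A\otimes B$, has rank strictly greater than $2^k$.

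First I would settle the base case $k=1$ by an explicit matrix calculation. For $T_{cw,2}$ the general slice in the basis used in the paper is
\[
M(\gamma)=\begin{pmatrix}0&\gamma_1&\gamma_2\\ \gamma_1&\gamma_0&0\\ \gamma_2&0&\gamma_0\end{pmatrix},
\]
with determinant the cubic $-\gamma_0(\gamma_1^2+\gamma_2^2)$; checking its $2\times 2$ minors confirms that every nonzero slice has rank at least $2$. For $T_{skewcw,2}$, every nonzero slice is a nonzero $3\times 3$ antisymmetric matrix, hence of rank exactly $2$. I would then lift this to the Kronecker power through the multilinearity of the slice map: a decomposable $\gamma=\gamma_1\otimes\cdots\otimes\gamma_k\in C^*$ produces
\[
T^{\boxtimes k}(\gamma)=M(\gamma_1)\otimes\cdots\otimes M(\gamma_k),
\]
whose matrix rank (as a Kronecker product of matrices) equals the product of the factor ranks and is therefore at least $2^k$.

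The main obstacle is upgrading this rank bound from decomposable to arbitrary $\gamma\in C^*$, since a general linear combination of rank-$2^k$ matrices could in principle drop rank. I would exploit a combinatorial feature of the base slices: the three rank-$2$ matrices $M_0,M_1,M_2$ of $T_{cw,2}$ have pairwise disjoint supports inside the $3\times 3$ frame, and this disjointness propagates to the $3^k$ Kronecker products $M_{\mathbf{i}}=M_{i_1}\otimes\cdots\otimes M_{i_k}$ inside the $3^k\times 3^k$ slice matrix. A row/column rank count on an arbitrary disjoint-support combination, combined with the constraint that Proposition~\ref{flagbapolar} places on the Borel-fixed filtration of $E_{110}$ via the (large) symmetry group $G_T$, should rule out non-decomposable low-rank slices. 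The analogous argument for $T_{skewcw,2}^{\boxtimes k}$ is identical, using that every nonzero slice of $T_{skewcw,2}$ is a rank-$2$ antisymmetric matrix whose support is disjoint from the supports of the other basis slices.
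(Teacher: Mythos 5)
Your overall strategy -- reduce the corollary to a rank bound on the nonzero elements of $T(C^*)$ and feed it into Corollary \ref{emptycor} -- is exactly the route the paper takes, but the key step is both left open and, in the strict form you need, false. As you yourself note, the constant $3^k+2^k$ requires $\BP T(C^*)\cap \sigma_{2^k}(Seg(\BP A\times \BP B))=\emptyset$, i.e.\ every nonzero slice of rank strictly greater than $2^k$. That statement fails: the basis slices of $T_{cw,2}^{\boxtimes k}$ are Kronecker products of $k$ rank-two matrices and have rank exactly $2^k$, and your own $k=1$ computations only give rank at least $2$ (for $T_{skewcw,2}$, exactly $2$). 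So Corollary \ref{emptycor} can only be invoked with $q=2^k-1$, which yields $\ur\geq 3^k+2^k-1$; the extra $+1$ is not reachable along this route, and for $k=1$ it would even contradict $\ur(T_{cw,2})=4$. The paper's proof likewise only asserts that every nonzero slice has rank at least $2^k$, so what this argument honestly delivers is $3^k+2^k-1$, i.e.\ the border-substitution bound of \cite{MR3578455} quoted in the introduction; your difficulty in establishing the strict inequality is therefore not a fixable technical gap but an obstruction inherent to the stated constant.

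Separately, the step you call the main obstacle -- passing from decomposable $\gamma$ to arbitrary $\gamma\in (C^{\ot k})^*$ -- is not actually closed in your proposal, and the tool you reach for, Proposition \ref{flagbapolar} with its Borel-fixed filtrations of $E_{110}$, is irrelevant here: the claim is purely linear-algebraic, about the fixed subspace $T(C^*)\subset A\ot B$, and involves no candidate $E_{110}$ at all (the flag condition enters only through Corollary \ref{emptycor}, which you have already invoked). The disjoint-support observation you mention is by itself sufficient and is what makes the paper's one-line proof ``transparent'': in the monomial basis used in the paper's proof, every basis slice of $T_{cw,2}^{\boxtimes k}$ is supported on entries $(x,y)$ with $x_\a\neq y_\a$ for all $\a$, and each such entry lies in the support of exactly one basis slice; given a nonzero combination, pick a basis slice with nonzero coefficient $c$ and restrict to the $2^k$ rows and $2^k$ columns indexed by $\prod_\a\{i_\a,j_\a\}$ -- the resulting $2^k\times 2^k$ submatrix is $c$ times a permutation matrix (signed, in the skew case), so the slice has rank at least $2^k$. (In the Coppersmith-Winograd basis you chose, the three supports are disjoint but the row and column sets overlap, so this restriction argument needs a bit more care.) Carrying that out completes a proof of $\ur(T_{cw,2}^{\boxtimes k})\geq 3^k+2^k-1$ and likewise for $T_{skewcw,2}^{\boxtimes k}$, but not of the bound with $+2^k$ as stated.
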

\begin{proof}  Let $i_\a,j_\b\in \{ 1,2,3\}$. Then
$$T_{cw,2}^{\boxtimes k}(C^*)
=\langle
\sum_{\s\in \BZ_2^k} \s\cdot (a_{i_1\hd i_k}\ot b_{j_1\hd j_k})
\mid
i_{\a}\neq j_{\a} \forall  1\leq \a\leq k\rangle
$$
and the action of $\s$ is by swapping indices.
This transparently is of rank bounded below by $2^k$.
The case of $T_{skewcw,2}^{\boxtimes k}$ is the same except that the coefficients appear with
signs.
\end{proof} 

\subsection{Pure and mixed kernels}\label{pmker}
Define three types of contribution to the kernel of
the $(210)$-map:  the {\it free} kernel
$$
\k_f:=\tdim [(T(C^*)\ot A)\cap (S^2A\ot B)]
$$
the {\it pure } kernel 
$$\k_p=\tmin_{E_{110}'}\tdim[(A\ot E_{110}')\cap (S^2A\ot B)] ,
$$
where the min is over all choices of $E_{110}'\subset E_{110}$,
and the {\it  mixed}  kernel  
$$\k_m=\tdim[(A\ot E_{110})\cap (S^2A\ot B)] -\k_p-\k_f
$$ 
corresponding to
elements of the kernel arising from linear combinations of elements of $A\ot E_{110}'$
and $A\ot T(C^*)$.  In this language,  $E_{110}'$ passes  the $(210)$ test if and only if
   $\k_p+\k_m\geq r- \k_f$.
Define corresponding $\k_p',\k_m'$ for the $(120)$-test.
  
  \begin{conjecture} If $E_{110}$ is such that $\k_m,\k_m'=0$, then it is not viable.
  \end{conjecture}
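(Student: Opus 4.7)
The plan is to translate the algebraic vanishing $\k_m = \k_m' = 0$ into a structural statement about any hypothetical border rank decomposition realizing $E_{110}$, and then derive a contradiction from that structure. First I would unpack viability: if $E_{110}$ comes from a border rank $r$ decomposition of $T$, then there is a family $E_{110}(t) = \langle a_i(t)\otimes b_i(t): i=1,\ldots,r\rangle$ of $r$-dimensional spans of rank one matrices converging in $G(r,A\otimes B)$ to $E_{110}$, with $T(C^*)\subseteq E_{110}(t)$ for all $t\neq 0$. For each $t\neq 0$ the $(210)$-kernel contains the $r$ independent elements $a_i(t)\otimes a_i(t)\otimes b_i(t)\in S^2A\otimes B$, and passing to the limit produces $r$ independent elements of $(A\otimes E_{110})\cap (S^2A\otimes B)$.

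Next I would fix a complement $E_{110}'$ realizing the minimum in the definition of $\k_p$. Each rank one element $a\otimes b\in E_{110}$ splits uniquely as $X_f+X_p$ with $X_f\in T(C^*)$ and $X_p\in E_{110}'$, inducing the decomposition $a\otimes a\otimes b = a\otimes X_f + a\otimes X_p$ inside $(A\otimes T(C^*))\oplus (A\otimes E_{110}')$. The condition $\k_m=0$ means that, for some choice of $E_{110}'$, the whole limiting $(210)$-kernel lies in the direct sum $(A\otimes T(C^*))\oplus (A\otimes E_{110}')$ \emph{without} elements whose projection to each summand is individually non-zero. The heart of the proof would be the structural translation: such clean splitting in the limit forces each of the $r$ rank one curves $a_i(t)\otimes b_i(t)$ to be asymptotic to either $T(C^*)$ or $E_{110}'$. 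After reindexing, $m$ of them limit into $T(C^*)$ (spanning it) and the remaining $r-m$ span $E_{110}'$, producing a smooth rank decomposition of $T$ of size $r$ in which the first $m$ terms span $T(C^*)$.

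The same argument applied to the $(120)$-test with $\k_m'=0$ gives an analogous partition of the approximating curves, but now compatible with a possibly different complement of $T(C^*)$ in $E_{110}$ coming from the $B$-side multiplication map $(B\otimes E_{110})\cap (A\otimes S^2B)$. Using the flag condition of Proposition \ref{flagbapolar} to coordinate the two partitions along a common $\BB_T$-fixed filtration, I would argue that the combined constraints force $T$ to split as $T=T_1+T_2$ with disjoint support: $T_1$ of rank $m$ supported on the subspace dual to $T(C^*)$, $T_2$ of rank $r-m$ supported transversally. For the tensors of interest this would contradict conciseness, and more generally it would contradict the assumed strict inequality $\ur(T)>\max(m,r-m)$ or whatever hypothesis excludes such a splitting.

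The main obstacle is the structural translation in the previous paragraph. The implication \emph{limit kernel splits cleanly $\Longrightarrow$ approximating curves split cleanly} is delicate because coalescing rank one points produce tangent-vector contributions to the kernel whose projections onto $A\otimes T(C^*)$ and $A\otimes E_{110}'$ are not directly governed by the individual curves. Making this precise will require working on a suitable resolution of the family $E_{110}(t)$ near $t=0$ (essentially the $\sigma_r$-relevant desingularization in the sense of Proposition \ref{flagbapolar}) and carefully tracking how higher-order terms in the Taylor expansions of $a_i(t)\otimes b_i(t)$ interact with the fixed splitting. It is precisely at this step that the conjecture could potentially fail --- an example where a mixed decomposition limits to a kernel that happens to split cleanly would be a counterexample, and a first check of the plan would be to test it on low-dimensional examples (e.g. $T_{cw,q}$ with small $q$) where the candidate $E_{110}$'s are enumerable.
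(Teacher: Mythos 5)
You should first note that the paper offers no proof of this statement to compare against: it is stated as an open conjecture, supported only by the heuristic remark that an $E_{110}'$ which never \lq\lq sees\rq\rq\ the tensor should not be viable. Your proposal does not close it either, and you say so yourself: the decisive step --- \lq\lq the limit kernel splits cleanly, hence the approximating rank-one curves split cleanly into those asymptotic to $T(C^*)$ and those asymptotic to $E_{110}'$\rq\rq\ --- is exactly the unproven heart of the matter, so what you have is a plan with an acknowledged gap, not a proof.

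Beyond that self-identified gap, several intermediate claims are already problematic. The containment $T(C^*)\subseteq E_{110}(t)$ for $t\neq 0$ is not part of the data: for $t\neq 0$ one only has $T_t(C^*)\subseteq \langle a_i(t)\ot b_i(t)\rangle$ for the approximating tensor $T_t$, and $T(C^*)\subseteq E_{110}$ holds only in the limit. More seriously, the Grassmannian limit $E_{110}$ is in general strictly larger than the span of the limits of the individual points $a_i(t)\ot b_i(t)$ (this is precisely the border rank phenomenon), so one cannot partition the curves into $m$ whose limits span $T(C^*)$ and $r-m$ whose limits span $E_{110}'$; indeed in the paper's central example, $\tperm_3(C^*)$ meets no secant variety $\s_3$ of the two-factor Segre, hence contains no rank-one elements at all, so no rank-one curve can limit into $T(C^*)$, and your proposed structural dichotomy is impossible there even though the conjecture is expected to hold. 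Finally, your endgame derives a contradiction from conciseness or from \lq\lq whatever hypothesis excludes such a splitting,\rq\rq\ but the conjecture carries no such hypothesis, so even if the splitting step were repaired the argument would prove a weaker, conditional statement rather than the conjecture itself.
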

  
  Intuitively, if $E_{110}'$ never \lq\lq sees\rq\rq\ the tensor, it should not be viable.


\subsection{Limits of the total degree $3$ border apolarity tests}\label{badnews}

\begin{proposition}\label{weakdeg3} Let $m \geq  9$, but $m\neq 10,15$. Then for any tensor
in $\BC^m\ot \BC^m\ot \BC^m$, there are candidate ideals   passing all
degree three tests for border rank at most $r$  when $r\geq 2m$.  

More generally, setting $r=m+k^2$, there are candidate ideals in total degree two passing   all degree three tests  
once $m\leq  \frac{k^3}2-\frac{k^2}2$.
In particular, 
for all $\ep>0$,  $r\geq m+m^{\frac 13 +\ep}$,  and $m$ sufficiently large,  there   are such  candidate ideals.
\end{proposition}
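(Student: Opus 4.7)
My plan is to construct a candidate $\BZ^{\oplus 3}$-graded ideal explicitly by specifying its total-degree-$2$ components $I_{110},I_{101},I_{011}$, and then verify that each of the seven degree-$3$ border apolarity tests passes.  I will pick generic subspaces $U\subset A$, $V\subset B$, $W\subset C$, each of dimension $k$, and set
$$E_{110}:=T(C^*)+U\otimes V,\qquad E_{101}:=T(B^*)+U\otimes W,\qquad E_{011}:=T(A^*)+V\otimes W.$$
For generic $U,V,W$, each of these sums is direct, so $\dim E_{stu}=m+k^2=:r$.  By construction each $E_{stu}$ contains the relevant $T$-slice, so the degree-$2$ containment conditions from border apolarity hold automatically.

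The key computation is the intersection $(U\otimes V\otimes A)\cap(S^2A\otimes B)=S^2U\otimes V$ (after the natural reordering $A\otimes B\otimes A\to A\otimes A\otimes B$), which has dimension $\binom{k+1}{2}k$; this contributes directly to the kernel of the $(210)$-map, so the $(210)$-test passes when $\binom{k+1}{2}k\geq r=m+k^2$, equivalently $m\leq (k^3-k^2)/2$.  The other five single-zero-index tests $(120),(201),(102),(021),(012)$ follow by permuting the roles of $A,B,C$ in the construction.  For the $(111)$-test, the triple intersection $(E_{110}\otimes C)\cap(E_{101}\otimes B)\cap(E_{011}\otimes A)$ contains $U\otimes V\otimes W$ (of dimension $k^3$) together with $\langle T\rangle$, which gives room for an $r$-dimensional $E_{111}$ whenever $m\leq (k^3-k^2)/2$.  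Given this general statement, the first claim follows by taking $k=\lceil\sqrt m\rceil$: then $r=m+k^2\geq 2m$, and the constraint $m\leq k^2(k-1)/2$ is satisfied whenever $k\geq 3$, i.e.\ $m\geq 9$.  The exceptional values $m=10,15$ are exactly those $m\geq 9$ for which $r=2m$ cannot be realized as $m+k^2$ for an admissible $k$, and for which no decomposition of $E_{110}'$ into balanced or unbalanced rank-$1$ blocks simultaneously satisfies the $(210)$ and $(120)$ kernels at $r=2m$.  The asymptotic claim then follows by choosing $k=\lceil(2m)^{1/3}\rceil$, giving $k^2\sim(2m)^{2/3}$, which exceeds $m^{1/3+\varepsilon}$ for any $\varepsilon>0$ and $m$ sufficiently large.

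The main technical obstacle I anticipate is making the genericity claims coherent: one must choose $U,V,W$ to be generic with respect to all three $T$-slices $T(A^*),T(B^*),T(C^*)$ simultaneously, and must verify the $(111)$-triple intersection is correctly controlled so that the $\langle T\rangle$-line and the $U\otimes V\otimes W$-block provide genuinely independent contributions.  The containment $\langle T\rangle\subseteq (T(C^*)\otimes C)\cap(T(B^*)\otimes B)\cap(T(A^*)\otimes A)$ is immediate from the decompositions of $T$ in each of the three directions, so $E_{111}$ may be chosen to be any $r$-dimensional subspace of the triple intersection containing $\langle T\rangle$, and a dimension count confirms that the resulting $I_{111}$ satisfies $I_{111}\subseteq T^\perp$.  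A secondary subtlety is ensuring the mixed kernels (in the sense of \S\ref{pmker}) do not force additional containment relations that reduce the codimension below $r$; under genericity of $U,V,W$ against $T$ these mixed contributions vanish, which is what allows the construction to proceed uniformly in $T$.
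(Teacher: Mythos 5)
Your treatment of the second assertion is essentially the paper's own proof: the paper takes $E_{110}'=\langle a_1,\ldots,a_k\rangle\ot\langle b_1,\ldots,b_k\rangle$ (coordinate rather than generic subspaces, which is immaterial), gets the $(210)$-kernel contribution $S^2\langle a_1,\ldots,a_k\rangle\ot\langle b_1,\ldots,b_k\rangle$ of dimension $\binom{k+1}{2}k$, and the $(111)$-intersection $\langle a_1,\ldots,a_k\rangle\ot\langle b_1,\ldots,b_k\rangle\ot\langle c_1,\ldots,c_k\rangle\oplus\langle T\rangle$ of dimension $k^3+1$, giving exactly your inequality $m\le\frac{k^3-k^2}{2}$. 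The asymptotic statement is handled at the same level of precision as in the paper.

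The genuine gap is in your deduction of the first assertion. A candidate ideal for border rank $r$ must have each $I_{stu}$ of codimension \emph{exactly} $r$, so choosing $k=\lceil\sqrt m\rceil$ produces candidate ideals only for the single value $r=m+\lceil\sqrt m\rceil^{2}$; since $2m=m+k^2$ forces $m=k^2$, your construction never produces a candidate at the critical value $r=2m$ unless $m$ is a perfect square, and existence at some $r_0>2m$ does not imply existence at $2m$ (smaller $r$ is the harder case, and it is the case the proposition is really about). The paper handles this by padding $E_{110}'$ up to dimension exactly $m$: with $k=\lfloor\sqrt m\rfloor$, $t=k+\lceil\frac{m-k^2}{2}\rceil$, $t'=k+\lfloor\frac{m-k^2}{2}\rfloor$, it takes $E_{110}'=\langle a_1,\ldots,a_k\rangle\ot\langle b_1,\ldots,b_k\rangle+\langle a_{k+1},\ldots,a_{t'}\rangle\ot b_1+a_1\ot\langle b_{k+1},\ldots,b_t\rangle$, so that $\dim E_{110}=2m$, and the $(210)$-kernel then has dimension at least $\binom{k+1}{2}k+(t'-k)k+\binom{t'-k+1}{2}+(t-k)$, while the $(111)$-intersection still contains a $(k^3+1)$-dimensional space. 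The exceptional values are read off from these two counts: $m=10$ is the one value greater than $8$ where the $(210)$ count falls below $2m$, and $m=15$ the one where $k^3+1<2m$. Your explanation of the exceptions (``those $m$ for which $2m$ cannot be realized as $m+k^2$'') is not correct --- that failure occurs for every non-square $m$, not only $10$ and $15$ --- and within your construction nothing is ever verified at $r=2m$, so the exceptional set cannot be identified at all. To repair the argument you need the padded $E_{110}'$ (or another device achieving codimension exactly $2m$) together with the explicit dimension counts above.
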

\begin{proof}
For the first assertion, it suffices to prove the case $r=2m$ and   the tensor $T$ is concise.  Set $k=\lfloor \sqrt{m}\rfloor$, $t=k+\lceil \frac{m-k^2}2\rceil$,  and $t'=k+\lfloor \frac{m-k^2}2\rfloor$.
Take
$E_{110}'=\langle a_1\hd a_k\rangle\ot \langle b_1\hd b_k\rangle+\langle a_{k+1}\hd a_{t' }\rangle \ot b_1
+a_1\ot \langle b_{k+1}\hd b_{t}\rangle$
and similarly for the other spaces.
Then 
\begin{align*}
&(E_{110}\ot A )\cap (S^2A \ot B)\supseteq\\
& S^2\langle a_1\hd a_k\rangle\ot \langle b_1\hd b_k\rangle
\op \langle a_{k+1}\hd a_{t'}\rangle\cdot \langle a_1\hd a_k\rangle\ot b_1
\op S^2 \langle a_{k+1}\hd a_{t'}\rangle\ot b_1+a_1^{\ot 2}\ot \langle b_{k+1}\hd b_t\rangle.
\end{align*}
This has dimension $\binom{k+1}2 k+ (t'-k)k+\binom{t'-k+1}2+(t-k)$ which is at least $2m$ in the specified range.
(The only value greater than $8$ the inequality  fails for is $m=10$.)
Similarly the $(120)$ test is passed at least as easily.
Finally
$$
(E_{110}\ot C)\cap (E_{101}\ot B)\cap (E_{011}\ot A)\supseteq
\langle a_1\hd a_k\rangle
\ot
\langle b_1\hd b_k\rangle
\ot 
\langle c_1\hd c_k\rangle
\op \langle T\rangle
$$
which has dimension $k^3+1$ which is at least $2m$ in the range of the proposition.
(The only value   greater than $8$ the inequality  fails for is $m=15$.)

The second assertion follows with the same $E_{110}'$,  taking $r=m+k^2$ and  $t,t'=0$.

\end{proof}
 
\begin{example} For $T_{cw,2}^{\boxtimes 3}$ \label{box3}
   it is easy to get $E_{110}'$ of dimension $21$ (so for border rank $48<63$) that pass the
  $(210)$ and $(120)$ tests. Take $E_{110}'$ spanned by rank one basis vectors such that
  the associated Young diagram is a staircase.
  Then $\k_p=\k_p'=1(6)+2(5)+3(4)+4(3)+5(2)+6(1)= 56>48$. 
\end{example}

\section{Moduli and submultiplicativity}
\subsection{Moduli spaces $\ul{VSP}$}\label{vspsect}

  Following \cite{BBapolar}, define
$\ul{VSP}(T)$ to be the set of ideals as in \S\ref{bapolarsect} arising from a border rank $\ur(T)$ decomposition
of $T$. (In the notation of \cite{BBapolar} this is $\ul{VSP}(T, \ur(T))$.) Since
for zero dimensional  schemes of a fixed length (and more generally for schemes
with a fixed Hilbert polynomial), there is
a uniform bound on degrees of generators of their ideals,
this is a finite dimensional variety which naturally embedds in a product of Grassmannians.  

A more classical object also of interest is $\ul{VSP}_{A\ot B\ot C}(T)\subset G(\ur(T), A\ot B\ot C)$, which
just records the $\ur(T)$-planes giving rise to a border rank decomposition, i.e., the
annihilator of the  $(111)$-component
of the ideal. In particular $\tdim(\ul{VSP}_{A\ot B\ot C}(T))\leq \tdim (\ul{VSP}(T))$.

It will be useful to state the following result in a more general context:
Let $X\subset \BP V$ be a variety not contained in a hyperplane,
assume $\s_{r-1}(X)\neq \BP V$ and write $\tdim \s_r(X)=r\tdim (X)+r-1-\d$.
 Consider the incidence correspondence
 $$
S_r(X)=\ol{\{ ((x_1\hd x_r), y, V)\in X^{\times r}\times 
\BP V\times G(r,V)
\mid
y\in \langle x_1\hd x_r\rangle\subseteq V\}}, 
$$
and its projection maps
 $$
 \begin{matrix}
& & S_r(X) & & \\
& \swarrow &  &\searrow &\\
G(r,V) & &&& \s_r(X).
\end{matrix}
$$
Call the projections $\pi_G,\pi_\s$.
We have $\tdim S_r(X)=r\tdim(X)+r-1$ so for $y\in \s_r(X)_{general}$,
 $\tdim(\pi_\s^{-1}(y))=\d$.
 
 Define $\ul{VSP}_{X,\BP V}(y):=\pi_G\pi_\s^{-1}(y)$.
 When $X=Seg(\BP A\times \BP B\times \BP C)$, $y=T$,  and $V=A\ot B\ot C$,
 this  is $\ul{VSP}_{A\ot B\ot C}(T)$.
 
 \begin{proposition} For all $y\in \s_r(X)$,
  $\tdim \ul{VSP}_{X,\BP V}(y)\geq \delta$.
\end{proposition}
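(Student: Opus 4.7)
The plan is to introduce the natural incidence variety whose fibers over $\s_r(X)$ are exactly the desired $\ul{VSP}_{X,\BP V}(y)$, and conclude via the fiber dimension theorem. Let
\[
\mathcal{J} := (\pi_\s, \pi_G)(S_r(X)) \subseteq \s_r(X) \times G(r, V),
\]
and let $W_r(X) := \pi_G(S_r(X)) \subseteq G(r, V)$ be the variety of (limits of) secant $r$-planes to $X$. The second projection realizes $\mathcal{J}$ as a $\BP^{r-1}$-bundle over $W_r(X)$, since its fiber over $V$ is $\BP V$, so $\tdim \mathcal{J} = \tdim W_r(X) + r - 1$. The first projection $p : \mathcal{J} \to \s_r(X)$ is surjective --- given $y \in \s_r(X)$, the limit $V$ of a family of secant $r$-planes realizing a border rank decomposition of $y$ lies in $W_r(X)$ and satisfies $y \in \BP V$ --- and the fiber of $p$ over $y$ is exactly $\ul{VSP}_{X, \BP V}(y)$ by definition.

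The key remaining step is to verify $\tdim W_r(X) = r \tdim X$. The inequality $\tdim W_r(X) \leq r \tdim X$ is immediate since $W_r(X)$ is the image under $(x_1, \ldots, x_r) \mapsto \langle x_1, \ldots, x_r \rangle$ of the rational map $X^{\times r} \dashrightarrow G(r, V)$. For the reverse, I would show this rational map is generically finite, i.e., that for a generic $r$-tuple of points of $X$ the $r$-plane $V$ they span meets $X$ in only finitely many points, so that only finitely many other $r$-tuples span the same $V$. I would deduce this from the nondegeneracy of $X$ together with the hypothesis $\s_{r-1}(X) \neq \BP V$: the latter guarantees $r$ general points of $X$ are linearly independent (so that the rational map is defined generically), and combined with a standard dimension count for the expected dimension of $V \cap X$ (which is nonpositive once $\s_r(X)$ is a proper subvariety), the former rules out unexpectedly large intersections.

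With $\tdim \mathcal{J} = r \tdim X + r - 1$ in hand, the fiber dimension theorem applied to the surjection $p : \mathcal{J} \to \s_r(X)$ yields, for every $y \in \s_r(X)$,
\[
\tdim \ul{VSP}_{X,\BP V}(y) \;\geq\; \tdim \mathcal{J} - \tdim \s_r(X) \;=\; (r \tdim X + r - 1) - (r \tdim X + r - 1 - \d) \;=\; \d .
\]
The hard part will be the generic finiteness claim underlying $\tdim W_r(X) = r \tdim X$. If $X$ contained a positive-dimensional linear subvariety through generic points, an entire family of distinct $r$-tuples could span the same $r$-plane, and $W_r(X)$ would be smaller than $r \tdim X$; ruling this out cleanly from just the hypotheses $X \not\subseteq$ hyperplane and $\s_{r-1}(X) \neq \BP V$ is where the bulk of the work sits.
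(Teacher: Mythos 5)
Your skeleton is essentially the paper's: both arguments run through the incidence correspondence $S_r(X)$ with its two projections and finish with semicontinuity of fiber dimension. Your only repackaging is to fiber the image $\mathcal{J}=(\pi_\s,\pi_G)(S_r(X))$ over $\s_r(X)$, whose fibers are exactly the $\ul{VSP}_{X,\BP V}(y)$, instead of, as the paper does, bounding $\tdim \pi_\s^{-1}(y)\geq \d$ and then arguing that $\pi_G$ restricted to that fiber is finite-to-one. Your variant is fine, and arguably slightly cleaner, but in both versions everything reduces to the same key input: $\tdim \pi_G(S_r(X))=r\tdim X$, i.e.\ generic finiteness of the span map $X^{\times r}\dra G(r,V)$. (Your identification of the fibers of $\mathcal{J}\to\s_r(X)$ with $\ul{VSP}_{X,\BP V}(y)$ and the $\BP^{r-1}$-bundle structure of $\mathcal{J}$ over $\pi_G(S_r(X))$ are correct, modulo the standing convention that $S_r(X)$ is the closure of the locus of linearly independent $r$-tuples and a one-line check that boundary fibers are all of $\BP E$.)

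The genuine gap is exactly at that key input. The paper does not prove it; it quotes it from \cite{MR2427466}. Your sketched derivation, nondegeneracy of $X$ plus the observation that the expected dimension of $\BP E\cap X$ is nonpositive, does not work as stated: the planes $E$ in question are $r$-secant planes, a family of dimension at most $r\tdim X$, which is far from dense in $G(r,V)$, so no Bertini or generic-transversality argument forces $\BP E\cap X$ to have the expected dimension. A priori the general secant plane could meet $X$ in a positive-dimensional locus even though a general plane of that dimension does not, and for special secant planes this genuinely happens (e.g.\ when $X$ contains positive-dimensional linear spaces). Ruling this out for the general secant plane using only ``$X$ nondegenerate and $\s_{r-1}(X)\neq \BP V$'' is a real theorem, not a dimension count, and you yourself flag it as unresolved (``where the bulk of the work sits''). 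So as written the proposal is incomplete at precisely the step the paper outsources to the literature; citing \cite{MR2427466}, or reproducing its argument, is what is missing.
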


\begin{proof} 
By \cite{MR2427466} $\tdim\pi_G(S_r(X))=r\tdim(X)$, so $\pi_G$ 
generically has $(r-1)$-dimensional fibers,
which correspond to the choice of a point in $\BP V$.
This  implies that $\pi_G|_{{\pi_\s}\inv(y)}$ is finite to one. Since $\tdim \pi_\s^{-1}(y)\geq \d$
we conclude.
\end{proof}

 \begin{corollary}\label{r5c3}  A border rank five tensor   $T\in \BC^3\ot \BC^3\ot \BC^3$
 has $\tdim \ul{VSP}_{A\ot B\ot C}(T)\geq 8$.
 \end{corollary}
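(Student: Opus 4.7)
The plan is simply to specialize the proposition immediately preceding the corollary to the triple Segre. Take $X = Seg(\BP A \times \BP B \times \BP C) \subset \BP V$ with $V = A \ot B \ot C$ and $r = 5$; the corollary will follow once I identify the defect $\delta$ for $\sigma_5(X)$.

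First I record the relevant dimensions: $\tdim X = 6$ and $\tdim \BP V = 3^3 - 1 = 26$. The classical fact I need is that the generic border rank of a tensor in $\BC^3 \ot \BC^3 \ot \BC^3$ equals $5$, equivalently $\sigma_5(X) = \BP V$, so $\tdim \sigma_5(X) = 26$; simultaneously $\sigma_4(X) \subsetneq \BP V$ (otherwise the generic border rank would be $\leq 4$), so the nondegeneracy hypothesis $\sigma_{r-1}(X) \neq \BP V$ of the preceding proposition is met. Plugging into the defect formula $\tdim \sigma_r(X) = r \tdim X + r - 1 - \delta$ gives
\[
\delta = 5 \cdot 6 + 4 - 26 = 8.
\]

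Since any $T \in \BC^3 \ot \BC^3 \ot \BC^3$ of border rank $5$ lies in $\sigma_5(X)$, applying the preceding proposition yields $\tdim \ul{VSP}_{A \ot B \ot C}(T) \geq \delta = 8$, which is the claimed bound.

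There is essentially no obstacle here: the real content was packaged in the preceding proposition, and the computation reduces to the well-known secant dimension count for the $3 \times 3 \times 3$ Segre. The only point that could warrant care is that the conclusion must hold for \emph{every} $T$ of border rank $5$, not merely the generic one; but this uniformity is already built into the statement of the preceding proposition via upper semicontinuity of fiber dimension, so it applies to all $y \in \sigma_r(X)$, not only the general ones.
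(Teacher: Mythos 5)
Your proposal is correct and is exactly the paper's argument: the paper's proof consists of the single observation that $\tdim \s_5(Seg(\pp 2\times \pp 2\times \pp 2))=26$, from which the preceding proposition gives $\delta = 5\cdot 6+4-26=8$ for every $T\in\s_5$. Your added checks (that $\s_4\neq \BP V$ so the proposition's hypothesis holds, and that the bound applies to all, not just generic, border rank five tensors) are accurate and merely make explicit what the paper leaves implicit.
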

 \begin{proof}
 $\tdim \s_5(Seg(\pp 2\times \pp 2\times \pp 2))=26$.
 \end{proof}
 
 \begin{remark} In this case, by \cite{MR2652318} $T$ also has rank five and thus 
 $\tdim VSP_{A\ot B\ot C}(T)\geq 8$, where $VSP_{A\ot B\ot C}(T)$ is the space
 of rank decompositions.
 \end{remark}

 A similar argument shows:

\begin{proposition} \label{gposvsppos} Let $\cO $ be the largest over all
$(s,t,u)$ of the  smallest
orbit of $G_T$ in $\BP (S^sA\ot S^tB\ot S^uC)$.
Then $\tdim \ul{VSP}(T)\geq \tdim \cO $.
\end{proposition}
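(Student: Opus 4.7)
The plan is to apply Borel's fixed point theorem to the $G_T$-action on $\ul{VSP}(T)$, produce a $\BB_T$-fixed ideal, and then relate its stabilizer to the stabilizer of a point in the smallest orbit of $G_T$ on $\BP(S^sA\ot S^tB\ot S^uC)$.

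First I would note that $\ul{VSP}(T)$ is a closed subvariety of a product of Grassmannians (as observed just before the definition of $\ul{VSP}$), hence a projective $G_T$-variety. The Fixed Ideal Lemma (Lemma \ref{fil}), which is Borel's fixed point theorem applied to the solvable Borel subgroup $\BB_T\subseteq G_T$, then furnishes an ideal $I_0\in\ul{VSP}(T)$ with $\BB_T\subseteq P:=\mathrm{Stab}_{G_T}(I_0)$. Since $P$ contains a Borel, it is a parabolic subgroup, and the orbit $G_T\cdot I_0\cong G_T/P$ is a closed subvariety of $\ul{VSP}(T)$. This gives the preliminary bound $\tdim\ul{VSP}(T)\geq\tdim(G_T/P)$.

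Next I would relate $\tdim(G_T/P)$ to $\tdim\cO$. Pick $(s,t,u)$ realizing the max in the definition of $\cO$, so that $\cO=G_T\cdot[p]\cong G_T/Q$ with $Q$ the parabolic stabilizer of a point $[p]$ in the smallest orbit. The parabolic $P$ preserves the $r$-dimensional subspace $E_{0,stu}:=I_{0,stu}^\perp\subseteq S^sA\ot S^tB\ot S^uC$, and applying Lie's theorem to the action of $\BB_T$ on $E_{0,stu}$ yields a $\BB_T$-eigenline $\langle v\rangle\subseteq E_{0,stu}$. The orbit $G_T\cdot[v]\subseteq\BP(S^sA\ot S^tB\ot S^uC)$ is closed, being the $G_T$-orbit of a Borel-fixed point, so its dimension is at least $\tdim\cO$; equivalently, $\tdim\mathrm{Stab}_{G_T}([v])\leq\tdim Q$.

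To finish one must establish $\tdim P\leq\tdim\mathrm{Stab}_{G_T}([v])$ for an appropriate choice of $(s,t,u)$ and weight line $[v]$. The main obstacle, I anticipate, is exactly this last matching: arranging $(s,t,u)$ and $[v]\in\BP(E_{0,stu})$ so that $P$ fixes $[v]$ pointwise (equivalently, $E_{0,stu}$ carries a $P$-invariant line corresponding to a highest weight vector for some dominant $\lambda$ making $V_\lambda\hookrightarrow S^sA\ot S^tB\ot S^uC$). This should follow from a Borel--Weil-style argument in which $G_T/P$ is realized as the closed orbit in $\BP(V_\lambda)$ for a weight $\lambda$ coming from the graded structure of the ideal $I_0$; the book-keeping needed to align the Borel-fixed ideal $I_0$ with a corresponding highest weight line is where the technical work lies.
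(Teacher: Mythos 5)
Your strategy --- let $G_T$ act on the projective variety $\ul{VSP}(T)$, extract a $\BB_T$-fixed ideal $I_0$ via Lemma \ref{fil}, and compare the parabolic $P=\mathrm{Stab}_{G_T}(I_0)$ with the stabilizer of a point of the minimal orbit --- is in the general spirit of what the paper intends (the paper prints no proof at all; the proposition is only asserted to follow by an argument similar to the preceding incidence-correspondence count, the underlying mechanism being that $G_T$ carries ideals of border rank decompositions to ideals of border rank decompositions). But the step you defer to the end, $\tdim P\leq \tdim \mathrm{Stab}_{G_T}([v])$, is not book-keeping: it is the entire content of the proposition, and with your reduction it can fail outright. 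A $\BB_T$-eigenline $[v]\subset E_{0,stu}$ need not be $P$-stable; if $P$ acts irreducibly and nontrivially on $E_{0,stu}$, then $\mathrm{Stab}_{G_T}([v])$ is a proper parabolic even though $P$ may be all of $G_T$. This is not hypothetical: choosing $I_0$ Borel-fixed makes its orbit as small as possible, and $\ul{VSP}(T)$ can consist of a single $G_T$-fixed ideal (this happens for $T_{cw,q}$ with $q>2$), in which case your chain reads $\tdim\ul{VSP}(T)\geq \tdim G_T/P=0$ and proves nothing; in that situation the proposition amounts to the assertion that every $S^sA\ot S^tB\ot S^uC$ contains a $G_T$-invariant line, which is precisely what your Borel--Weil gesture would have to produce and does not.

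Nor can you fall back on the cruder claim that the stabilizer of the $r$-plane $E_{0,stu}$ (or of the ideal) has dimension at most the maximal point stabilizer in $\BP(S^sA\ot S^tB\ot S^uC)$: that is false for arbitrary subspaces. For $SL_2\times SL_2$ acting on $\BC^2\ot\BC^2$, the plane $\BC^2\ot e_1$ has a $5$-dimensional stabilizer, while every point of $\BP(\BC^2\ot\BC^2)$ has stabilizer of dimension at most $4$. So any correct completion must exploit the special structure of the spaces $E_{stu}$ that come from border rank decompositions --- for instance the flag condition of Proposition \ref{flagbapolar}, which places points of $Seg(v_s(\BP A)\times v_t(\BP B)\times v_u(\BP C))$ and low secant flags inside $\BP E_{stu}$ --- or must bound $\tdim\ul{VSP}(T)$ by something other than the orbit of a Borel-fixed ideal; neither ingredient appears in your proposal. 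As written it is an outline whose decisive inequality is unproved and whose chosen reduction cannot by itself yield the stated bound.
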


\subsection{How to find good tensors for the laser method?}\label{lasergood}

The utility of a tensor $T\in A\ot B\ot C$  for the laser method may be thought of as the ratio of its {\it cost}, which is the asymptotic
rank,
$
\aR(T) := \lim_{N \to \infty} [\uR(T^{\boxtimes N})] ^{1/N},     $
  and
its {\it value},  which is   its asymptotic subrank $\aQ(T) := \lim_{N \to \infty} [\uQ(T^{\boxtimes N})]^{1/N}$. 
See \cite{MR3984631} for a discussion, where the ratio of their
logs  is called the {\it irreversibility} of $T$. 
Here  $\uQ(T)$ is the maximum $q$ such that  $\Mone^{\oplus q}
\in \ol{GL(A)\times GL(B)\times GL(C)\cdot T}$.
Unless
a tensor is of minimal border rank, we only can estimate the asymptotic rank  of a tensor by computing
its border rank and the border rank of its small Kronecker powers.

 There are several papers attempting to find tensors that give good upper bounds on $\o$
in the laser method: 

Papers on {\it barriers} may be interpreted as describing where {\it not} to look for good tensors:
\cite{MR3984631,MR3984617,DBLP:conf/innovations/AlmanW18,2018arXiv181008671A}
discuss  limits of the laser method for various types of tensors and various types of implementations.

A program to utilize algebraic geometry and representation theory to find good tensors for the laser method was initiated in
\cite{2019arXiv190909518C,MR3682743}.

Here we describe a more modest goal:
determine criteria that indicate (or even guarantee) that border rank is  
strictly sub-multiplicative under the Kronecker square.

To our knowledge, the first example of a non-minimal border rank tensor that
satisfied $\ur(T^{\boxtimes 2})=\ur(T)^2$ was given in \cite{CGLVkron}: the small Coppersmith-Winograd
tensor $T_{cw,q}$ for $q>2$ and in this paper we show equality also holds when $q=2$. 
This shows that tight tensors need not exhibit strict submultiplicativity.
Several examples of strict submultiplicativity were known previous to this paper: the $2\times 2$ matrix
multiplication tensor $\Mtwo\in \BC^4\ot \BC^4\ot \BC^4$, $ \ur(\Mtwo)=7$ \cite{MR2188132}
while  $\ur(\Mtwo^{\boxtimes 2})\leq 46$ \cite{2014arXiv1412.1687S}.
The tensors   of \cite{MR3941923} have a drop of one, a generic tensor   $T\in \BC^3\ot \BC^3\ot \BC^3$
satisfies $\ur(T)=5$ while  $\ur(T^{\boxtimes 2})\leq 22$ \cite{CGLVkron}, and  $\ur(T_{skewcw,2})=5$ while 
$\ur(T_{skewcw,2}^{\boxtimes 2})=17$ \cite{CGLVkron,CHLapolar}.

All the strict submultiplicativity examples have positive dimensional $\ul{VSP}$. This 
is   attributable to
the degeneracy of $\s_4(Seg(\pp 2\times\pp 2\times \pp 2))$ for the generic tensors in $\BC^3\ot \BC^3\ot \BC^3$,
and to the large symmetry groups for the other cases: If a tensor
$T\in A\ot B\ot C$  has a positive dimensional symmetry group $G_T$
and $G_T$ does not have a one-dimensional submodule in each of $A\ot B$, $A\ot C$, $B\ot C$, $A\ot B\ot C$,
then $\tdim(\ul{VSP}(T))>0$ because any ideal in the $G_T$-orbit closure of an ideal of a border rank decomposition
for $T$ will give another border rank decomposition.

It would be too much to hope that
   a concise tensor $T$ not of minimal border rank satisfying $\tdim \ul{VSP}(T)>0$ 
also satisfies $\ur(T^{\boxtimes 2})<\ur(T)^2$. Consider the following example:
Let $T=T_1\op T_2$ with the $T_j$ in disjoint spaces, where
 $T_1$ has non-minimal border rank and $\tdim \ul{VSP}(T_1)=0$ and $T_2$ has minimal border rank with 
 $\tdim \ul{VSP}(T_2)>0$. Then there is no reason to believe $T^{\boxtimes 2}$
 should have strict submultiplicativity. 
  
It is possible that the converse holds: that
strict  submultiplicativity under the Kronecker square implies a positive dimensional
$\ul{VSP}$.

It might be useful, following \cite{MR3941923} to split the submultiplicativity question into
two questions: first to determine if the usual tensor square is submultiplicative and
then if the border rank of the Kronecker square is less than the border rank of the
tensor square.
Note that in general, assuming non-defectivity, for a projective
variety $X\subset \BP V$ of dimension $N$,  $\s_{R-1}(X)$
has codimension $N+1$ in $\s_R(X)$. In our case $R=r^2$ and in the tensor
square case $N=6m-6$, and in the Kronecker square case $N=3m^2-3$.
A priori,  for $T\in \BC^m\ot \BC^m\ot \BC^m$ of border rank $r$,
  $T^{\otimes 2} \in \s_{r^2}(Seg(\BP^{(m-1)\times 6}))$ and 
  submultiplicativity  is a codimension
  $6m-5$ condition, whereas 
  $T^{\boxtimes 2} \in \s_{r^2}(Seg(\BP^{(m^2-1)\times 3}))$ and 
  submultiplicativity  is a codimension
  $3m^2-2$ condition.  Despite this, the second condition is weaker than the first.

\section{Koszul flattening lower bounds}\label{kflowerbnds}
The best general technique available for border rank lower bounds are Koszul flattenings \cite{MR3376667,MR3081636}.

Fix   an integer $p$. Given a tensor $T = \sum_{ijk} T^{ijk} a_i \otimes b_j \otimes c_k \in A \otimes B \otimes C$, the $p$-th \emph{Koszul flattening} of $T$ on the space $A$ is the linear map
\begin{align*}
T_A^{\ww p}: \Lambda^p A \ot B^* &\to \Lambda^{p+1}A\ot C \\
X\ot \beta & \mapsto \textsum_{ijk}T^{ijk}\beta(b_j)(a_i\wedge X) \ot c_k.
\end{align*}
Then \cite[Proposition 4.1.1]{MR3081636} states 
\begin{equation}\label{kozinq}
\ur(T)\geq \frac{\trank(T_{A}^{\ww p})}{\binom {\tdim(A)-1}p}.
\end{equation}
The best lower bounds for any given $p$ are obtained by restricting $T$ to a generic
$2p+1$ dimensional subspace of $A^*$ so the denominator becomes $\binom{2p}p$.

\begin{theorem}\label{kyfbndsprop} The following border rank lower bounds are obtained by
applying Koszul flattenings to a restriction of the tensor to a sufficiently
generic $\BC^{2p+1}\ot B\ot C\subset A\ot B\ot C$. Values of $p$ that give
the bound are in parentheses.
\begin{enumerate}
 \item $\ur(T_{skewcw,4}^{\boxtimes 2})\geq 39$ ($p=2,3,4$)
 \item $\ur(T_{skewcw,6}^{\boxtimes 2})\geq 70$ ($p=2,3,4$)
  \item $\ur(T_{skewcw,8}^{\boxtimes 2})\geq 110$ ($p=4$)
    \item $\ur(T_{skewcw,10}^{\boxtimes 2})\geq 157$ ($p=4$)
    \item $\ur(T_{skewcw,2}^{\boxtimes 3})\geq 49$ ($p=4$)
 \item $\ur(T_{skewcw,4}^{\boxtimes 3})\geq 219$ ($p=3$)
 \item $\ur(T_{skewcw,6}^{\boxtimes 3})\geq 550$ ($p=3$)
  \item $\ur(T_{skewcw,8}^{\boxtimes 3})\geq 1089$ ($p=3$)
    \item $\ur(T_{skewcw,10}^{\boxtimes 3})\geq 1886$ ($p=3$).
\end{enumerate}
\end{theorem}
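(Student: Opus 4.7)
The plan is direct: for each of the nine items, form the tensor $T_{skewcw,q}^{\boxtimes k}$, restrict to a generic $(2p+1)$-dimensional subspace $A' \subset A^*$, assemble the Koszul flattening matrix $T_A^{\ww p}\colon \La p A'{}^* \ot B^* \to \La{p+1}A'{}^*\ot C$, certify its rank, and invoke inequality \eqref{kozinq}. Since the upper bound on $\binom{2p}{p}$ is the factor one divides by, the statements reduce to showing that for the claimed $p$ the Koszul flattening rank exceeds the claimed border rank times $\binom{2p}{p}$. That the rank computed for a particular choice of subspace is a valid lower bound for the generic rank is automatic by semicontinuity.

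First, I would write down $T_{skewcw,q}^{\boxtimes k}$ in the monomial basis of $A = (\BC^{q+1})^{\ot k}$, exploiting the fact that each $T_{skewcw,q}$ is supported on exactly six triples $(i,j,\ell)$ with signs, so the Kronecker power has only $6^k$ nonzero coordinates. Next, choose a generic $(2p+1)\times (q+1)^k$ matrix $\Pi$ (say with random rational or finite-field entries) representing the restriction $A' \hookrightarrow A$. The contracted tensor $\Pi \cdot T_{skewcw,q}^{\boxtimes k}$ inherits the sparsity, and the Koszul flattening matrix, while of large nominal size $\binom{2p+1}{p}(q+1)^k \times \binom{2p+1}{p+1}(q+1)^k$, has only $O(6^k\binom{2p+1}{p})$ nonzero entries per column group, making sparse linear algebra feasible.

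The main obstacle is the raw size of the linear-algebra computation in the hardest cases, notably items (4), (8), (9), where $(q+1)^k$ reaches $11^3=1331$ and the Koszul matrices have hundreds of thousands of rows and columns. Two reductions make this tractable. First, since $T_{skewcw,q}$ has a symmetry containing the torus that simultaneously scales $a_0,b_0,c_0$ and rescales the paired blocks, the Kronecker power carries a large torus action under which the Koszul flattening is block-diagonal in weight; one computes the rank block by block. Second, to avoid floating-point issues one performs the rank computation modulo a large prime; this yields a certified lower bound on the true rank since rank can only drop under reduction. Each block-rank computation is then a modest sparse LU factorization over $\BF_p$.

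Finally, one verifies that for each item the computed rank divided by $\binom{2p}{p}$, rounded up, equals the claimed bound. For instance, for item (6) with $p=3$, one needs rank at least $219 \cdot \binom{6}{3} = 219 \cdot 20 = 4380$; one checks that the actual computed rank equals (or exceeds) this value. The specific values of $p$ indicated in the parentheses reflect the empirical optimum of $\lceil \trank/\binom{2p}{p}\rceil$ as $p$ varies in the admissible range $p \leq \lfloor ((q+1)^k-1)/2\rfloor$; for the larger tensors only $p=3$ or $p=4$ is computationally reachable anyway. The proof thus consists of (i) the algorithmic description above and (ii) a table of rank computations, which one records as supplementary data.
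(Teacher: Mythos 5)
Your proposal takes essentially the same approach as the paper, which presents Theorem \ref{kyfbndsprop} as a reported computational result without a detailed written argument: restrict to a $(2p+1)$-plane, compute the rank of the Koszul flattening, and divide by $\binom{2p}{p}$. Two small issues are worth flagging. First, the threshold arithmetic in your worked example is off: to conclude $\ur\geq 219$ from a $p=3$ Koszul flattening one only needs rank $\geq 218\cdot\binom{6}{3}+1=4361$, not $219\cdot 20 = 4380$; the general criterion is rank $\geq (n-1)\binom{2p}{p}+1$, which matters when the computed rank falls short of $n\binom{2p}{p}$. Second, your two computational reductions are in tension: a random (generic) restriction matrix $\Pi$ destroys the torus action on $A$, so the restricted Koszul flattening will not be block-diagonal by torus weight. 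To exploit that block structure you must restrict to a torus-stable coordinate subspace, at which point the subspace is no longer generic; the resulting rank is still a valid lower bound on the generic rank by semicontinuity, but you cannot assume it achieves the generic value, so the claimed bound must be checked to hold for that specific choice. Neither issue undermines the overall correctness of the method, and the mod-$p$ certification step and the use of \eqref{kozinq} are handled correctly.
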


Better lower bounds for the larger cases are potentially possible, if not easily accessible,  using larger values of $p$.

Compare these with the values for the small Coppersmith-Winograd tensor  from \cite{CGLVkron}:
\begin{enumerate}
 \item $\ur(T_{cw,4}^{\boxtimes 2})=36$  
 \item $\ur(T_{cw,6}^{\boxtimes 2})=64$  
  \item $\ur(T_{cw,8}^{\boxtimes 2})= 100$  
    \item $\ur(T_{cw,10}^{\boxtimes 2})= 144$  
 \item $\ur(T_{cw,4}^{\boxtimes 3})\geq 180$  
 \item $\ur(T_{cw,6}^{\boxtimes 3})=512$  
  \item $\ur(T_{cw,8}^{\boxtimes 3})= 1000$  
    \item $\ur(T_{cw,10}^{\boxtimes 3})= 1728$.  
\end{enumerate}

Note  that $\ur(T_{cw,q}^{\boxtimes 4})\leq (q+2)^4$ 
and that $\ur(T_{skewcw,q}^{\boxtimes 4})$ is at least the estimate
in Proposition \ref{kyfbndsprop} times $q+1$ by \cite[Prop. 4.2]{CGLVkron}. 
Based on this, it is possible as of this writing that  
of $\ur(T_{skewcw,q}^{\boxtimes 4})\leq \ur(T_{cw,q}^{\boxtimes 4})$ for $q=2, 6,8$.

\section{$\tperm_3=T_{cw,2}^{\boxtimes 2}$} \label{perm316pf}

\begin{proof}[Proof of Theorem \ref{perm316}] The upper bound follows as $\ur(T_{cw,2})=4$.

For the lower bound, we prove there is no   $E_{110}\subset A\ot B$ of dimension $15$ passing the $(210)$
and $(120)$ tests.
We assume there is one and obtain a contradiction.  

In this proof $\{i,j,k\}=\{1,2,3\}$, $\{\ii,\jj,\kk\}=\{1,2,3\}$, $\hkk \in \{\ii,\jj\}$,    $\hk \in \{ i,j\}$ etc..

 Here 
$$\tperm_3(C^*)=\langle a^i_\ii\ot b^\hi_\hii + a^\hi_\ii \ot b^i_\hii
+  a^i_\hii\ot b^\hi_\ii + a^\hi_\hii\ot b^i_\ii \rangle.
$$
Thus $\tperm_3(C^*)\cap \s_3=\emptyset$.  
Observe that $\kappa_f=1$ as
$$(\tperm_3(C^*)\ot A)\cap (S^2A\ot B)=\langle \sum a^{k}_{\kk}\ot (
a^{i}_{\ii}\ot b^{j}_{\jj}+a^{j}_{\ii}\ot b^{i}_{\jj}+a^{i}_{\jj}\ot b^{j}_{\ii}+a^{j}_{\jj}\ot b^{i}_{\ii}) 
\rangle .
$$

\begin{remark} In general, for any symmetric tensor $T$, $\kappa_f\geq 1$ due
to the copy of $T$ in $S^3A\subset S^2A\ot B$.
\end{remark}

We next discuss what weight vectors can appear in $E_{110}'$.

The possible weights of elements in $A\ot B$ are $(200)(200)$, $(110)(110)$, $(200)(110)$ and their permutations
under the action of $(\FS_3\times \FS_3)\rtimes \BZ_2$. We will say an element has
{\it type} $(xyz)(pqr)$ if its weight is in the $(\FS_3\times \FS_3)\rtimes \BZ_2$-orbit of
 $(xyz)(pqr)$. We fix a weight basis of $E_{110}$ (note that if there are no multiplicities, this
 is already determined up to scale). 
 
 All weight vectors of type $(200)(200)$ have rank one,   vectors of type $(200)(110)$ have
 rank one or two and    vectors of type $(110)(110)$ have rank at most four.

 The rank two weight vectors tangent to a rank one element of
 type $(200)(200)$, which we may write as
$a^j_\jj\ot b^j_\jj$,  are $a^j_\jj\ot b^j_\hjj+Ka^j_\hjj\ot b^j_\jj$, for
   some $K\neq 0$,  or its $\BZ_2$-image,
which are of type  $(200)(110)$
 or $a^j_\jj\ot b^\hj_\hjj+Ka^\hj_\hjj\ot b^j_\jj$, which   are of type    $(110)(110)$.
   
  No rank two tangent vector to a rank one element of type $(110)(110)$  is a weight vector.
  
The   rank two tangent weight vectors to a rank one element of type $(200)(110)$, e.g., 
$a^i_\ii\ot b^i_\kk$,  are of the form
$a^i_\ii\ot b^\hi_\kk+K a^\hi_\ii\ot b^i_\kk$ for  some $K\neq 0$, and they are of  
  type $(110)(110)$.

Consider rank three second derivatives of a weight vector:
Say we had some 
$$
(a +ta'+t^2a'')\ot (b +tb'+ t^2b'')=a\ot b+t(a\ot b'+a'\ot b)
+t^2(a \ot b''+ a'\ot b'+a''\ot b )+...
$$
For the $t$ coefficient to appear in $E_{110}'$,
either we would need $a'\ot b+a\ot b'$ to be a weight vector, or
that either  $a\ot b'$ or $a'\ot b$   already appeared alone in $E_{110}'$.
Say we were in the first case,
that would imply that  
$a'$ is a multiple of $a$ and $b'$ is a multiple of $b$ so our projective curve would just be stationary to first order and we are reduced to a first derivative. 
In the second case,   for the coefficient of $t^2$ to be a weight vector, it must be of
type $(110)(110)$, the weight of $a'\ot b'$, which we write as $a^i_\jj\ot  b^j_{\ii}$.
Thus for a cost of four, we obtain a rank $3$ vector of type $(110)(110)$.
If $a\ot b$ is of type $(200)(200)$ we may write the space as 
\be\label{4plane}
\langle a^i_\ii\ot b^i_\ii, a^i_\jj\ot b^i_\ii,  a^i_\ii\ot b^j_\ii,
a^i_\jj\ot b^j_\ii + a^i_\ii\ot b^j_\jj + a^j_\jj \ot b^i_\ii
\rangle.
\ene
The elements are respectively of type $(200)(200), (200)(110),(200)(110), (110)(110)$.
It is also possible to have $a\ot b$ of type $(200)(110)$ or $(110)(110)$ and obtain 
a rank $3$ vector of type $(110)(110)$,  but it will be clear
from what follows that this is not advantageous.

Since no two vectors of type $(200)(200)$ are colinear, the only possibility of a vector
 of the form $x'+y'$ appearing is if $x$ is of type $(200)(200)$ and $y$ of type $(200)(110)$,
 but in this case one just gets a weight vector of the form $x'$ of type $(110)(110)$ by the weight vector requirement.
 If $x,y$ are of type $(200)(110)$, then any nontrivial $x'+y'$ is not a weight vector.

Thus we are reduced to several overlapping  types of vectors:
rank one weight vectors ($81$ such),  rank two tangent vectors to the nine  rank one vectors
of type $(200)(200)$,     rank two tangent
vectors tangent to a rank one vector of type $(200)(110)$, and 
 a rank three vector of type $(110)(110)$ as long
as we include the four dimensional space described above.

Recall the definitions of the pure and mixed kernels from \S\ref{pmker}.
Here we need  to show it is not possible to have $\k_p+\k_m\geq 14$ and $\k_p'+\k_m'\geq 14$

The proof will be completed by  Lemmas \ref{lem1} and \ref{lem2} below.
\end{proof}

\begin{lemma}\label{lem1} For all choices of admissible $E_{110}'$, $\k_m,\k_m'\leq 4$.\end{lemma}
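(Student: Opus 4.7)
The plan is to reformulate $\k_m$ as the dimension of an intersection of images inside $\Lambda^2 A\ot B$, and then bound the intersection by a weight-by-weight analysis. The bound $\k_m'\leq 4$ follows from the analogous argument with $A$ and $B$ exchanged.

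First, using the splitting $E_{110}=T(C^*)\op E_{110}'$ and the short exact sequence
\[
0\to S^2A\ot B\to A\ot A\ot B\xrightarrow{\pi}\Lambda^2A\ot B\to 0,
\]
a rank-nullity computation (for $E_{110}'$ achieving the minimum defining $\k_p$) identifies mixed kernel elements with pairs $(M,N)\in (A\ot E_{110}')\times(A\ot T(C^*))$ satisfying $\pi(M)=-\pi(N)\neq 0$, and hence
\[
\k_m=\tdim\bigl[\pi(A\ot T(C^*))\cap\pi(A\ot E_{110}')\bigr].
\]
This converts the bound into controlling an intersection of two subspaces of $\Lambda^2 A\ot B$.

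Next I exploit the torus symmetry. By Lemma \ref{fil} and the fact that $T(C^*)$ is $G_T$-stable, $E_{110}'$ may be chosen as a weight-space complement for the maximal torus of $G_T$, so both $\pi$-images decompose into weight spaces and the intersection decomposes weight-by-weight. Each element of $T(C^*)$ has weight of type $(110)(110)$, so $\pi(A\ot T(C^*))$ lives at weights whose upper and lower types each lie in $\{(210),(111)\}$. By the classification preceding the lemma of admissible weight vectors in $E_{110}'$---rank-one vectors, rank-two tangent vectors at rank-one points of various types, and rank-three weight vectors of type $(110)(110)$ forcing inclusion of the full 4-plane \eqref{4plane}---$\pi(A\ot E_{110}')$ lives at the same allowed types (type $(300)$ is killed by $\pi$).

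The final step is a case-by-case analysis on the allowed weights. The delicate case is the unique weight of type $(111)(111)$, where $\pi(A\ot T(C^*))$ is 8-dimensional (from $a^k_\kk\ot Y_{k,\kk}$ modulo $\tperm_3$) inside an 18-dimensional ambient weight space. The contribution from $\pi(A\ot E_{110}')$ at this weight is controlled by the count and structure of type-$(110)(110)$ weight vectors in $E_{110}'$, which is restricted by $\tdim E_{110}'=6$, by the $4$-dimensional cost \eqref{4plane} of any rank-three weight vector, and by the requirement that rank-two tangent vectors carry along the rank-one points to which they are tangent. Tracking the admissible configurations and their pairings with $A\ot T(C^*)$ yields the bound $4$ at this weight. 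Analogous analyses at the weights of types $(210)(210), (210)(111),$ and $(111)(210)$ follow the same pattern, and summing the weight-by-weight intersection dimensions yields $\k_m\leq 4$.

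The main obstacle will be the combinatorial bookkeeping at the $(111)(111)$ weight: one must track which configurations of type-$(110)(110)$ weight vectors in $E_{110}'$ can support cancellations against $A\ot T(C^*)$ in $\Lambda^2 A\ot B$, and show that the number of linearly independent such cancellations is at most $4$---this involves the interaction between the 9 elements $Y_{k,\kk}$ of $T(C^*)$, the 9 basis vectors $a^i_\ii$ of $A$, and the structure of rank-three weight vectors via \eqref{4plane}.
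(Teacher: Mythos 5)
Your reformulation $\k_m=\tdim\bigl[\pi(A\ot T(C^*))\cap\pi(A\ot E_{110}')\bigr]$ and the weight-by-weight decomposition are both sound ideas, but the proof is not actually carried out: the sentence ``Tracking the admissible configurations and their pairings with $A\ot T(C^*)$ yields the bound $4$ at this weight'' is an assertion, and it is precisely the entire content of the lemma. The paper's proof earns its bound by explicitly enumerating, up to the $(\FS_3\times\FS_3)\rtimes\BZ_2$ symmetry, the relation types \eqref{m1}--\eqref{m66} that produce a nonzero element of $\pi(A\ot T(C^*))\cap\pi(A\ot E_{110}')$, noting how many elements of $E_{110}'$ each one consumes, and then running a budget argument against $\tdim E_{110}'=6$. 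None of that case work appears in your proposal.

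Beyond the omission, the identification of the $(111)(111)$ weight as ``the delicate case'' points in the wrong direction. Working through the paper's relation list: relations of type \eqref{m2}, which carry the weight $a^j_\jj\ot(\G\cdot a^\hj_\hjj\ot b^j_\jj)$, sit in weight spaces of $T_1$-type $(210)$ and $T_2$-type $(210)$, and the configuration that attains $\k_m=\k_m'=4$ consists of four such relations at four distinct $(210)(210)$-weights. The relations living at the $(111)(111)$-weight are those of types \eqref{m1} and \eqref{m66}, and the budget argument shows exactly that these \emph{cannot} be exploited: they require three type-$(110)(110)$ vectors concentrated in one weight space (type \eqref{m1}) or spread over three (type \eqref{m66}), which is too expensive given $\tdim E_{110}'=6$ and the need to leave room for $\k_m'$.

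Finally, there is a structural problem with a purely weight-by-weight bound. The constraint $\tdim E_{110}'=6$ is global, and the contribution of $E_{110}'$ to the intersection at any single weight $w$ is controlled by which weight vectors of $E_{110}'$ are present, not by the ambient dimension of the weight space at $w$. Bounding each weight-space intersection independently and summing can charge the same element of $E_{110}'$ to several weights without a mechanism to prevent double counting, and it cannot on its own exclude, say, five type-$(110)(110)$ vectors producing a $5$-dimensional intersection at $(111)(111)$. What rules that out is the interaction between the flag condition (which forces $E_{110}'$ to contain a complete chain $F_1\subset\cdots\subset F_6$ with $\BP F_j\subset\s_j$, so rank-two tangent vectors drag along their base points and a rank-three vector of type $(110)(110)$ costs the full $4$-plane \eqref{4plane}) and the simultaneous requirement that $\k_m'$ also be large. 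Your sketch gestures at these constraints but never deploys them; without that, the bound $\k_m\leq 4$ does not follow.
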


\begin{lemma}\label{lem2}For all choices of admissible $E_{110}'$, 
$\k_p,\k_p'\leq 10$ and  if equality holds, then  $\k_m,\k_m' <4$.
\end{lemma}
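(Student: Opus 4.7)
The plan is to bound the pure kernel $\k_p=\tdim[(A\ot E_{110}')\cap(S^2A\ot B)]$ by a case analysis on the $\BB_T$-fixed structure of the six-dimensional complement $E_{110}'\subset A\ot B$ to $\tperm_3(C^*)$. From the preceding weight analysis, each element of a weight basis of $E_{110}'$ is either (i) a rank-one weight vector of type $(200)(200)$, $(200)(110)$, or $(110)(110)$, (ii) a rank-two tangent weight vector of one of the permitted subtypes, or (iii) part of the four-dimensional block displayed in \eqref{4plane}. The flag condition (Proposition~\ref{flagbapolar}) further forces $E_{110}'$ to contain a chain $F_1\subset\cdots\subset F_6$ with $F_j\subset\s_j(Seg(\BP A\times\BP B))$, so the data of $E_{110}'$ reduces to a short combinatorial bookkeeping of footpoints, tangent directions at each footpoint, and whether the block \eqref{4plane} appears.

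First I would compute $\k_p$ locally for each type of configuration: a rank-one $a\ot b\in E_{110}'$ always contributes the symmetric element $a\ot a\ot b$; an isolated tangent $a\ot b'+Ka'\ot b$ contributes nothing on its own, but when its footpoint $a\ot b$ is also present in $E_{110}'$ the combination $a\ot(a\ot b'+Ka'\ot b)+Ka'\ot(a\ot b)$ lies in $S^2A\ot B$; iterating, a footpoint carrying $k$ tangent directions contributes $k+1$ to the kernel; and the block \eqref{4plane}, whose three rank-one generators plus one rank-three second-order generator interlock, can be computed directly to yield a kernel of bounded size. Summing these local counts across all admissible weight configurations of six vectors---subject to Borel-fixing, complementarity to $\tperm_3(C^*)$, and the flag condition---produces the uniform bound $\k_p\le 10$, with equality attained only by a very short list of rigid configurations, essentially a single footpoint carrying the maximum number of tangents, or configurations built around \eqref{4plane}.

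Finally, in each extremal $\k_p=10$ configuration I would verify $\k_m\le 3$ by direct inspection. The mixed kernel counts combinations of elements from $A\ot E_{110}'$ and $A\ot \tperm_3(C^*)$ which project into $S^2A\ot B$ but are not already detected by $\k_p$ or $\k_f$; the rigidity of the extremal configurations, together with the known weight decomposition of $\tperm_3(C^*)$, leaves only a handful of $\tperm_3(C^*)$-elements able to symmetrize against $E_{110}'$, and a direct count bounds $\k_m$ by three in every such case. The main obstacle will be the combinatorial complexity of the enumeration, since the number of \emph{a priori} weight-admissible six-element configurations is sizable; the flag condition and the explicit Borel-weighted description of $\tperm_3(C^*)$ are the principal tools for pruning. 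Combined with Lemma~\ref{lem1}, the resulting bound gives $\k_p+\k_m\le 13<15=r-\k_f$ (and the mirror inequality $\k_p'+\k_m'\le 13$ via $A\leftrightarrow B$), contradicting the $(210)$- and $(120)$-tests and completing the proof of Theorem~\ref{perm316}.
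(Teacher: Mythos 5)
Your counting scheme has a genuine gap: the pure kernel is not a sum of local, per-element (or per-footpoint-with-tangents) contributions. An element of $(A\ot E_{110}')\cap (S^2A\ot B)$ may mix several members of $E_{110}'$ that are not tangentially related at all: if $a_1\ot b$ and $a_2\ot b$ both lie in $E_{110}'$, then $a_2\ot(a_1\ot b)+a_1\ot(a_2\ot b)$ is in the kernel. These cross-terms dominate. Indeed, the configuration that actually achieves $\k_p=\k_p'=10$ is spanned entirely by rank-one weight vectors sharing factors in a staircase pattern,
$E_{110}'=\langle a_1\ot b_1,\ a_1\ot b_2,\ a_2\ot b_1,\ a_1\ot b_3,\ a_3\ot b_1,\ a_2\ot b_2\rangle$,
whose kernel is $S^2\langle a_1,a_2,a_3\rangle\ot b_1\op S^2\langle a_1,a_2\rangle\ot b_2\op \langle a_1^{\ot 2}\rangle\ot b_3$, of dimension $6+3+1=10$; your rules (one unit per rank-one element, $k+1$ per footpoint with $k$ tangents) would assign this space only $6$. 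So your tally is an undercount rather than an upper bound, your characterization of the equality locus (\lq\lq a single footpoint carrying the maximum number of tangents, or configurations built around \eqref{4plane}\rq\rq) is incorrect, and the concluding verification that $\k_m\leq 3$ would be carried out on the wrong configurations. Moreover, once all cross-terms are admitted, the case-by-case enumeration over six-element weight configurations is essentially the infeasible computation the flag condition was introduced to avoid (cf.\ Remark \ref{perm3prev}). A small additional slip: the contradiction is against border rank $15$, so with $\k_f=1$ the relevant threshold is $\k_p+\k_m\geq 14$, not $15$.

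The paper closes the argument without any enumeration, using the easy half of Cartan's test from exterior differential systems: with respect to a generic flag one has the character bound \eqref{ctest}, $\k_p\leq s_1+2s_2+\cdots+6s_6$, and similarly $\k_p'\leq t_1+2t_2+\cdots+6t_6$. Having both quantities reach $10$ forces involutivity with the staircase characters $s_1=t_1=3$, $s_2=t_2=2$, $s_3=t_3=1$; involutivity forces $E_{110}'$ to be spanned by rank-one elements, with the $B$-diagram the transpose of the $A$-diagram, i.e.\ exactly the staircase space above; and for that space a direct check gives $\k_m\leq 1<4$. If you wish to retain an explicit-counting flavor, you would at least need to replace your local rule by a column-by-column count (for each $B$-basis vector, the corresponding component of the kernel is $S^2$ of the $A$-support of $E_{110}'$ over that vector, plus corrections from non-rank-one elements), which is in effect re-deriving \eqref{ctest}; as it stands, the key inequality $\k_p\leq 10$ and the equality analysis are not established by your proposal.
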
 

\begin{proof}[Proof of Lemma \ref{lem1}]
Let $\G=\BZ_2\times \BZ_2$
and  $\G\cdot a^j_\jj\ot b^k_\kk= a^j_\jj\ot b^k_\kk+ a^j_\kk\ot b^k_\jj+ a^k_\jj\ot b^j_\kk+ a^k_\kk\ot b^j_\jj$.
In what follows underlined terms are elements of $E_{110}'$.
(The group $G_{\tperm_3}$ allows us to unambiguously define
the elements of $E_{110}'$ except those of type $(110)(110)$.) 

Up to $(\FS_3\times \FS_3)\rtimes \BZ_2$, there are four types of relations a single element of $\tperm_3(C^*)$
can be in a mixed term:

First, $E_{110}'$ could contain a complement to its weight space (the complement is three dimensional),
which would give rise to a single mixed term by taking a linear combination that reduces its rank to one.
Three type $(110)(110)$ terms of the same weight appear.
For example, if the $(110)(110)$ terms are rank one elements, then the element is  
\be\label{m1}
a^j_\jj\ot (\G\cdot a^i_\ii\ot b^j_\jj) 
-a^j_\jj\ot \ul{(a^i_\ii\ot b^j_\jj )}  
-a^j_\jj\ot \ul{(  a^i_\jj\ot b^j_\ii ) }
- a^j_\jj\ot \ul{(  a^j_\ii\ot b^i_\jj )}  .
\ene
 If there is a rank two element appearing, it cannot be a tangent vector,
 and thus its cost is two and the situation is the same as above.

 The one advantageous situation is if we have a four plane of the form \eqref{4plane}.
 Then we may have the relation
 \be\label{m14}
a^j_\jj\ot (\G\cdot a^i_\ii\ot b^j_\jj) 
-a^j_\jj\ot \ul{(a^i_\ii\ot b^j_\jj + a^i_\jj\ot b^j_\ii+   a^j_\ii\ot b^i_\jj )}  
\ene
giving a gain of one.

 The other possibilities are (here and below, $K,K'$ are constants):
 
\be\label{m2}
a^j_\jj\ot (\G\cdot  a^\hj_\hjj\ot b^j_\jj)+a^\hj_\hjj\ot \ul{(a^j_\jj\ot b^j_\jj) }+
a^\hj_\jj\ot \ul{(a^j_\jj\ot b^j_\hjj+K a^j_\hjj\ot b^j_\jj)} + a^j_\hjj\ot \ul{(a^j_\jj\ot b^\hj_\jj+K a^\hj_\jj\ot b^j_\jj ) } ,
\ene
where the underlined terms are 
respectively types $(200)(200)$, $(200)(110)$, $(200)(110)$,

\begin{align}\label{m3}
&a^i_\ii\ot (\G\cdot a^k_\kk\ot b^j_\jj)+
a^k_\kk\ot \ul{(a^i_\ii\ot b^j_\jj+Ka^j_\jj\ot b^i_\ii)} +
a^k_\jj\ot \ul{ (a^i_\ii\ot b^j_\kk+K' a^j_\kk\ot b^i_\ii)} \\
&\nonumber+ a^j_\kk\ot \ul{(a^i_\ii\ot b^k_\jj+K'a^k_\jj\ot b^i_\ii )}
+ a^j_\jj\ot \ul{(a^i_\ii\ot b^k_\kk+Ka^k_\kk\ot b^i_\ii)},
\end{align}
where the underlined terms are 
all of type $(110)(110)$ and they all  live in different weight spaces, and

 \begin{align}\label{m33}
&a^j_\ii\ot (\G\cdot a^\hj_\kk\ot b^j_\jj)+
a^\hj_\kk\ot \ul{(a^j_\ii\ot b^j_\jj+K a^j_\jj \ot b^j_\ii)} +
a^\hj_\jj\ot \ul{ (a^j_\ii\ot b^j_\kk+K' a^j_\kk\ot b^j_\ii)} \\
&\nonumber+ a^j_\kk\ot \ul{(a^j_\ii\ot b^\hj_\jj+K'a^\hj_\jj\ot b^j_\ii )}+ a^j_\jj\ot \ul{(a^j_{\ii}\ot b^\hj_\kk+Ka^\hj_\kk \ot b^j_\ii)},
\end{align}
where the underlined terms are 
respectively of  types $(200)(110)$, $(200)(110)$, $(110)(110)$, $(110)(110)$.
The two elements of type $(110)(110)$ live in different weight spaces. 

The possible terms in  the mixed kernel   with two  elements of $\tperm_3(C^*)$ that are not
just sums of terms with one element having no cancellation are as follows:

\begin{align}\label{m4}
&a^i_i\ot (\G\cdot a^k_k\ot b^i_j) + a^i_k\ot (\G\cdot a^i_i\ot b^k_j)\\
&+\nonumber
a^k_k\ot \ul{(a^i_i\ot b^i_j+Ka^i_j\ot b^i_i)} +
a^k_j\ot \ul{ (a^i_i\ot b^i_k+  a^i_k\ot b^i_i)} + a^k_i\ot \ul{(a^i_k\ot b^i_j)}
+ a^i_j\ot \ul{(a^i_i\ot b^k_k+Ka^k_k\ot b^i_i)}
+ a^i_j\ot \ul{(a^i_k\ot b^k_i)},
\end{align}
where the underlined terms are 
respectively of   types $(200)(110)$, $(200)(110)$, $(200)(110)$, $(110)(110)$, $(110)(110)$
and
the two type $(110)(110)$ vectors live in different weight spaces, and 

\begin{align}\label{m5}
&a^i_i\ot (\G\cdot a^k_k\ot b^i_j)+ a^i_j\ot (\G \cdot a^i_i\ot b^k_k) \\
&+\nonumber
a^k_k\ot \ul{(a^i_i\ot b^i_j+a^i_j\ot b^i_i)} +
a^k_j\ot \ul{ (a^i_i\ot b^i_k+K' a^i_k\ot b^i_i)} + a^i_k\ot \ul{(a^i_i\ot b^k_j+K'a^k_j\ot b^i_i )}
+a^i_k\ot \ul{(a^i_j\ot b^k_i)}+ a^k_i\ot \ul{(a^i_j\ot b^i_k)},
\end{align}

where the underlined terms are 
respectively of   types $(200)(110)$, $(200)(110)$, $(110)(110)$, $(110)(110)$ and
the two type $(110)(110)$ vectors live in different weight spaces.

The possible terms in  the mixed kernel   with three  elements of $\tperm_3(C^*)$ are:
  \begin{align}& \label{m6}
a^i_\ii\ot (\G\cdot a^\hi_\kk\ot b^i_\jj)+ a^i_\kk\ot (\G\cdot a^i_\ii\ot b^\hi_\jj)+a^i_\jj\ot (\G\cdot a^i_\ii\ot b^\hi_\kk)\\
&\nonumber +
a^\hi_\kk\ot \ul{(a^i_\ii\ot b^i_\jj+a^i_\jj\ot b^i_\ii) }+
a^\hi_\jj\ot \ul{(a^i_\ii\ot b^i_\kk+a^i_\kk\ot b^i_\ii)}+  a^\hi_\ii\ot \ul{(a^i_\kk\ot b^i_\jj+a^i_\jj\ot b^i_\kk) },
  \end{align}
where all have type $(200)(110)$,  with the same $(200)$ weight space and the three distinct $(110)$ weight spaces, and

 \begin{align}& \label{m66}
a^i_\ii\ot (\G\cdot a^j_\jj\ot b^k_\kk)+ a^j_\jj\ot (\G\cdot a^i_\ii\ot b^k_\kk)+a^k_\kk\ot (\G\cdot a^i_\ii\ot b^j_\jj)\\
&\nonumber +
a^i_\ii\ot \ul{(a^j_\kk\ot b^k_\jj+a^k_\jj\ot b^j_\kk) }+
a^j_\jj\ot \ul{(a^i_\kk\ot b^k_\ii+a^k_\ii\ot b^i_\ii)}+  a^k_\kk\ot \ul{(a^i_\jj\ot b^j_\ii+a^j_\ii\ot b^i_\jj) },
  \end{align}
where all have type $(110)(110)$ and lie in different weight spaces.

Since $\tdim E_{110}'=6$  and there are no pairs of relations with an overlap of
more than two vectors from $E_{110}'$, we conclude that any relation using
more than $3$ elements of $E_{110}'$ will not be useful. Thus
we consider the relations \eqref{m1},\eqref{m14},\eqref{m2}, \eqref{m6},  and \eqref{m66}.
If a relation of type \eqref{m1} appears, we have three elements of
type $(110)(110)$ all of the same weight in $E_{110}'$. The only
other relations among those we consider that
 use elements of type $(110)(110)$ are of type \eqref{m66},
 but this requires two additional elements of that type with different
 weights. We conclude a relation of type \eqref{m1} will not be useful, nor
 one of type \eqref{m66}.
 If a relation of type \eqref{m14} appears, then we have a four-plane
 of the form \eqref{4plane}. The best choice of such will allow us
 a relation of the form \eqref{m2} at a cost of two more, which fills the six plane.
 If we take the four-plane \eqref{4plane}, then for a cost of two, we may
 have a relation of type \eqref{m2} for both the $(210)$ and $(120)$ tests,
 explicitly, we need to include $a^i_\ii\ot b^i_\hii,a^\hi_\ii\ot b^i_\ii $ to
 get
 $$
a^i_\ii\ot (\G\cdot  a^\hi_\hii\ot b^i_\ii)+a^\hi_\hii\ot \ul{(a^i_\ii\ot b^i_\ii) }+
a^\hi_\ii\ot \ul{(a^i_\ii\ot b^i_\hii+  a^i_\hii\ot b^i_\ii)} + a^i_\hii\ot \ul{(a^i_\ii\ot b^\hi_\ii
+  a^\hi_\ii\ot b^i_\ii ) } 
$$
and its $(120)$ cousin. If we only add one of these we just get one of the two.
At this point we have either determined $E_{110}'$ or have one choice
of element remaining, and   there is no way to obtain two
(let alone $3$) more elements in $\k_m$ with just one more element.
We are reduced to considering relations of the form \eqref{m2} and \eqref{m6}.
The terms in \eqref{m6} are tangent vectors to an element of type $(200)(200)$
and to have all three terms, we would need at least two such elements
and we only have one more element to choose, which would not
be enough to produce even one more  relation. We are reduced to considering
relations of type \eqref{m2}, which need an element of type $(200)(200)$. 
Take one such element   $a^j_\jj\ot b^j_\jj$. 
It is clear we should take mixed kernel terms that it appears in. There are four such
and to obtain them, we only need include the four vectors
$a^j_\jj\ot b^j_\hjj+  a^j_\hjj\ot b^j_\jj$, $a^j_\jj\ot b^\hj_\jj+K a^\hj_\jj\ot b^j_\jj$ (where there
are two possibilities for each of the  the two hatted vectors).
When $K=1$, this gives $\k_m,\k_m'=4$ (if $K\neq 1$, then $\k_m'$ is smaller). 
At this point we have a five-plane in $E_{110}'$, but by this discussion, there is no
way to increase $\k_m$ further by adding just one more  element.
\end{proof}

For the proof of Lemma \ref{lem2} we use a basic fact from exterior differential systems (the easy
part of Cartan's test) \cite[Prop. 4.5.3]{MR3586335}: 
let
$B_1\subset B_2\subset \cdots \subset B_9$ be a generic flag in $B$ (generic in the sense that
$s_1$ below is maximized, and having maximized $s_1$, $s_2$ is maximized etc..). Let
  $s_1$   be the  dimension of the projection of $E_{110}'$ to $A\ot B_1$.  
Define $s_2$ by  $s_1+s_2$ is    dimension of the projection of 
$E_{110}'$ to $A \ot B_2$,
set $s_1+s_2+s_3$ to be the   dimension of $E_{110}'$  projected to
$A \ot B_3$ 
    etc.. Then 
  \be\label{ctest}\tdim (S^2A\ot B)\cap (A\ot E_{110}')
  \leq s_1+2s_2+\cdots +6s_6.
\ene
  In particular, $\k_p\leq s_1+2s_2+\cdots +6s_6$.
  If equality holds in \eqref{ctest}, we will say  $E_{110}'$ is $A$-involutive.
     
  \begin{proof}[Proof of Lemma \ref{lem2}]
  Let $t_1\hd t_6$ be the corresponding quantities for the $(120)$ test.
  The only way to have $\k_m,\k_m'\geq 10$ is  if the associated Young diagrams
  are  respectively  $A$ and $B$ involutive,  and both  the staircase, i.e.,  
  $s_1=t_1=3,s_2=t_2=2,s_3=t_3=1$. This is because involutivity can only hold if $E_{110}'$ is spanned
  by rank one elements and in this case the $B$-diagram is the transpose of the $A$-diagram.

  Thus there are $a_1,a_2,a_3\in A$ and $b_1,b_2,b_3\in B$ such that
  $$E_{110}'=\langle a_1\ot b_1, a_1\ot b_2, a_2\ot b_1, a_1\ot b_3,a_3\ot b_1,a_2\ot b_2\rangle.
  $$
  The best one can do here is to obtain $\k_m=1$, e.g., by  taking 
  $a_1=a^1_1$,   $a_2=a^1_3$, $a_3=a^3_1$ and similarly for the $b_j$.
  \end{proof}

 \begin{remark}\label{perm3prev} The reason $\tperm_3$ was previously unaccessible
 was that already to choose $E_{110}'$, without the flag condition one needed to
 introduce numerous parameters due to the high weight multiplicities that made the calculation infeasible.
 The flag condition guaranteed the presence of low rank elements in $E_{110}'$
 which significantly reduced the search space.
 \end{remark}

 \begin{remark} It is interesting to see what happens when $\tdim E_{110}'=7$,  to obtain a  border rank $16$
  ideal fixed by the torus in $G_{\tperm_3}$.
  One may take for example
  $$
  E_{110}'= 
  \langle a^1_1\ot b^1_1, a^1_2\ot b^1_1+ a^1_1\ot b^1_2,
  a^1_3\ot b^1_1+ a^1_1\ot b^1_3,
  a^2_1\ot b^1_1+a^1_1\ot b^2_1, a^3_1\ot b^1_1+a^1_1\ot b^3_1, a^1_2\ot b^1_2, a^1_3\ot b^1_2+
  a^1_2\ot b^1_3\rangle.
  $$
  Then we obtain the four $(200)(200)$ contributions to $\k_m$ from
  expressions of type  \eqref{m2} as well as  three additional
  contributions from expressions of type \eqref{m6}. Here $s_1=t_1=4$, $s_2=t_2=3$ and
 \begin{align*}
  &(A\ot E_{110}')\cap (S^2A\ot B)=\\
  \langle &
  a^1_1\ot a^1_1\ot b^1_1, a^1_2\ot a^1_1\ot b^1_1+ a^1_1\ot (a^1_2\ot b^1_1+ a^1_1\ot b^1_2),
  a^1_3\ot a^1_1\ot b^1_1+a^1_1\ot ( a^1_3\ot b^1_1+ a^1_1\ot b^1_3),\\
  &
  a^2_1\ot a^1_1\ot b^1_1+a^1_1\ot (a^2_1\ot b^1_1+a^1_1\ot b^2_1), a^3_1\ot a^1_1\ot b^1_1+a^1_1\ot (a^3_1\ot b^1_1+a^1_1\ot b^3_1),  \\
  &
  a^1_2\ot a^1_2\ot b^1_2, a^1_1\ot a^1_2 \ot b^1_2+ a^1_2\ot (a^1_2\ot b^1_1+ a^1_1\ot b^1_2),
  a^1_3\ot a^1_2\ot b^1_2+ a^1_2\ot (a^1_3\ot b^1_2+
  a^1_2\ot b^1_3)
  \rangle
  \end{align*}
  so $\k_p=\k_p'=8$ and both the $(210)$ and $(120)$ tests are passed.
  \end{remark}

\section{$\ul{VSP}(T_{cw,q})$}\label{vspcwq}
In this section we adopt the index range $1\leq \a,\b\leq q$. 
The small Coppersmith-Winograd tensor  has a well-known border rank decomposition, which is also a Waring border rank decomposition.
\begin{align*}
&T_{ cw,q} =\tlim_{t\ra 0}\\
&
\frac 1{t^2}
\sum_{{\a}} \left[ (a_0+ta_{{\a}})\ot (b_0+tb_{{\a}})\ot (c_0+tc_{{\a}})
 \right] \\
&-\frac 1{t^3}
\left[
(a_0+t^2\sum_{\a} a_{{\a}})\ot (b_0+t^2\sum_{\a} b_{{\a}})\ot (c_0+t^2\sum_{\a} c_{{\a}})\right]\\
&
-( q \frac 1{t^2}-\frac 1{t^3})a_0\ot b_0\ot c_0.
\end{align*}

Let $q>2$.
Write $A=B=C=L\op M$, where
$L=\langle a_0\rangle$ and $M=\langle a_{\a}\rangle$. Set $Q=\sum_{\a} a_{\a}\ot a_{\a}$. We have the group $G=G_{T_{cw,q}}= SO(M,Q)\times GL(L)\rtimes \FS_3=SO(q)\times \BC^*\rtimes \FS_3$.
Then
$$
A\ot B=L^{\ot 2} \op   L \ww M  \op S^2_0M\op \La 2 M \op  (L\cdot M \op Q),
$$
where the term in parenthesis is $T_{cw,q}(C^*)$. 
In what follows we write $L^k$ for $L^{\ot k}=S^kL$.

\begin{theorem}\label{vspcwq} For $q>2$,  $\ul{VSP}(T_{cw,q})$
is a
   point.  The  unique ideal
is as follows:  for all $s,t,u$ with $s+t+u=d$, the annhilator of the ideal in degree $(s,t,u)$ is
$$
L^{  d}\op  L^{d-1}\cdot M\op  L^{d-2}\cdot Q.
$$
Here 
$$L^{d-1}\cdot M=\langle a_0^{s-1}\cdot  a_{\a} \ot b_0^t \ot  c_0^u +
  a_0^{s }\ot b_0^{t-1}\cdot  b_{\a} \ot  c_0^u +  a_0^{s }\ot b_0^{t } \ot c_0^{u-1}\cdot  c_{\a} \mid {\a}=1\hd q\rangle
  $$
and 
$$
L^{d-2}\cdot Q =\langle\sum_{\a} a_0^{s-1}\cdot  a_{\a} \ot b_0^{t-1}\cdot b_{\a}\ot c_0^u+
a_0^{s-1}\cdot  a_{\a} \ot b_0^{t }\ot c_0^{u-1}\cdot c_{\a}+ 
  a_0^{s }\ot b_0^{t-1}\cdot  b_{\a} \ot c_0^{u-1}\cdot  c_{\a}  \rangle.
  $$

\end{theorem}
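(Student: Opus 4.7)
My plan separates existence from uniqueness, handling the former via the explicit border rank decomposition and the latter by combining the Fixed Ideal Lemma with the apolarity tests and the Flag Condition, propagated inductively.

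For existence, the border rank $q+2$ decomposition recalled just above the theorem exhibits $T_{cw,q}$ as the $t\to 0$ limit of $q+2$ rank-one tensors that all collapse to $[a_0\ot b_0\ot c_0]$. The associated limit ideal can be read off order by order: the limit point contributes the line $L^d$, the $q$ first-order tangent summands $(a_0+ta_\alpha)\ot(b_0+tb_\alpha)\ot(c_0+tc_\alpha)$ contribute the $q$-dimensional space $L^{d-1}\cdot M$, and the unique second-order correction $(a_0+t^2\sum_\alpha a_\alpha)\ot\cdots$ contributes $L^{d-2}\cdot Q$. In each graded piece the total dimension is $q+2$, and closure under the multiplication maps is immediate from the explicit basis, so we get the claimed ideal.

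For uniqueness, the Fixed Ideal Lemma reduces to ideals fixed by a Borel $\BB_T$ of $G_T^{\circ}=SO(q)\times GL(L)$. I first determine $E_{110}$. Since $E_{110}\supseteq T(C^*)$ has dimension $q+2$, the quotient $E_{110}/T(C^*)$ is a $\BB_T$-fixed line in
$$
(A\ot B)/T(C^*)\;\cong\;L^{\otimes 2}\oplus M_{\mathrm{anti}}\oplus S^2_0 M\oplus \Lambda^2 M,
$$
whose summands are $SO(q)\times GL(L)$-irreducible for $q>2$ (with $\Lambda^2 M$ splitting for $q=4$, handled separately). Each summand contributes a single highest-weight line, so the candidates for $E_{110}$ are at most four. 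I rule out all but $L^{\otimes 2}$ in two steps: first, the rank-two Borel-fixed highest-weight vectors in $M_{\mathrm{anti}}$ and $\Lambda^2M$, together with the disjointness $T(C^*)\cap Seg(\BP A\times\BP B)=\emptyset$, are obstructed by the Flag Condition (Proposition~\ref{flagbapolar}), since the required flag cannot be completed to dimension $q+2$; second, the rank-one isotropic candidate $e_1\ot e_1\in S^2_0M$ passes the Flag Condition at $F_1$, so it must be ruled out directly via the $(210)$-test, by computing the pure and mixed kernels of the skew-symmetrization map and exhibiting a strict shortfall below $q+2$ when $q>2$. By the $\mathfrak{S}_3$-action permuting $(A,B,C)$, analogous arguments pin down $E_{101}$ and $E_{011}$, and a parallel Borel-fixed enumeration within $S^2A,S^2B,S^2C$ determines $E_{200},E_{020},E_{002}$.

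For higher-degree parts, I induct on $d=s+t+u$. Given the claimed formulas for $E_{s',t',u'}$ with $s'+t'+u'<d$, the $(stu)$-test forces
$$
E_{stu}\;\subseteq\;(E_{s,t,u-1}\ot C)\cap(E_{s,t-1,u}\ot B)\cap(E_{s-1,t,u}\ot A),
$$
and an explicit expansion using the bases shows this triple intersection equals $L^d\oplus L^{d-1}\cdot M\oplus L^{d-2}\cdot Q$, of dimension exactly $q+2$. Equality $E_{stu}=L^d\oplus L^{d-1}\cdot M\oplus L^{d-2}\cdot Q$ follows, completing the induction.

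The main obstacle is ruling out the $S^2_0M$ candidate for $E_{110}$: since its Borel highest-weight vector $e_1\ot e_1$ is rank one, the Flag Condition is satisfied at $F_1$ and the obstruction must be extracted directly from the $(210)$-map. The key technical ingredient is that $e_1$ is $Q$-isotropic, and that for $q>2$ the pure and mixed kernel contributions of the resulting $E_{110}$ fail to reach the required dimension $q+2$.
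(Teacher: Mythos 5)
Your uniqueness argument has a genuine gap at its first step: you invoke the Fixed Ideal Lemma (Lemma \ref{fil}) to restrict attention to $\BB_T$-fixed ideals. That lemma only says that if $\ur(T)\le r$ then \emph{some} ideal arising from a border rank decomposition may be taken Borel-fixed; it does not say that \emph{every} ideal arising from a border rank decomposition is Borel-fixed. The theorem, however, is a statement about $\ul{VSP}(T_{cw,q})$, i.e.\ about all such ideals, so classifying the (at most four) Borel-fixed candidates for $E_{110}$ proves at best that $\ul{VSP}$ has a unique $\BB_T$-fixed point; that is compatible with $\ul{VSP}$ being positive dimensional or containing torus-fixed but non-Borel-fixed ideals, and the whole point of the theorem, in contrast with $\ul{VSP}(T_{skewcw,q})$, is the rigidity of every decomposition. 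The paper never makes this restriction: it takes an arbitrary viable $E_{110}$, notes that since $\BP T(C^*)\cap Seg(\BP A\times \BP B)=\emptyset$ the first filtrand $F_1$ from Proposition \ref{flagbapolar} forces $E_{110}=T(C^*)\op\langle v\rangle$ with $v$ a rank-one element with \emph{arbitrary} coefficients $(u_0a_0+\sum_\a u_\a a_\a)\ot(v_0b_0+\sum_\b v_\b b_\b)$, then uses the conditions on $F_2$ and $F_3$ (this is where $q>2$ enters) to force $v\equiv a_0\ot b_0$ modulo $T(C^*)$, and finally propagates to $E_{200}$ and all higher multidegrees via the multiplication maps, much as in your induction. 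To salvage your route you would need either to argue for arbitrary ideals as the paper does, or to add a genuine bridge from fixed points to all of $\ul{VSP}$ (e.g.\ a torus orbit-closure argument in the ambient product of Grassmannians showing a non-fixed ideal forces a second torus-fixed one, together with uniqueness among torus-fixed, not merely Borel-fixed, ideals); you supply neither.

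Second, even within the Borel-fixed world your two elimination steps are asserted rather than proved, and the delicate one is exactly the isotropic candidate $E_{110}=T(C^*)\op\langle e\ot e\rangle$ with $Q(e,e)=0$: here $e\ot e$ is rank one and the pencil $\langle e\ot e,\ a_0\ot e+e\ot b_0\rangle$ (the second vector lies in $T(C^*)$) consists entirely of matrices of rank at most two, so the elementwise flag conditions in low filtrands do not exclude it; the $(210)$-kernel shortfall you invoke is the actual content of this case and has to be computed, not announced. Likewise your claim that the rank-two highest weight candidates in $L\ww M$ and $\Lambda^2 M$ die because "the flag cannot be completed" needs justification: elementwise flags do exist inside the hyperplane of elements with no $Q$-component, so any obstruction must use the stronger form of Proposition \ref{flagbapolar} (that the tensor $T_j$ associated to $F_j$ satisfies $\ur(T_j)\le j$) or a later apolarity test, whereas the paper disposes of all candidates other than $a_0\ot b_0$ directly through its $F_2$/$F_3$ analysis. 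Your existence paragraph and the inductive identification of the higher graded pieces follow the same lines as the paper and are fine in outline, but as written the key dichotomy for $E_{110}$ — the heart of the proof — is not established.
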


\begin{proof} We must have $\BP E_{110}\cap Seg(\BP A\times \BP B)\neq \emptyset$.
This may be achieved by adding
some 
$$
(u_{0}a_0+\sum_{\a} u_{\a} a_{\a})\ot(v_0 b_0+ \sum_\b v_\b b_\b)
$$
for $u_0,u_{\a},v_0,v_\b\in \BC$.
 We also must have a flag as in Observation \ref{flagbapolar}. Taking anything
other than $a_0\ot b_0$ or $xa_0\ot b_{\a}+ya_{\a}\ot b_0$ (i.e., some $a_0\ot b_{\a}$ or $a_{\a}\ot b_0$ since
we are working modulo $T(C^*)$)  
makes the flag condition $ \BP F_2 \subset \s_2(Seg(\BP A\times \BP B))$ fail.
(Here we use that $q>2$.) 
Taking anything
other than $a_0\ot b_0$ 
makes the flag condition $ \BP F_3 \subset \s_3(Seg(\BP A\times \BP B))$ fail.
Thus there is a unique $E_{110}$, and by symmetry unique $E_{101}$ and $E_{011}$.
This triple exactly passes all degree three tests.

To see $E_{200}$ must be as asserted, it must be such that $(E_{200}\ot B)\supseteq E_{210}$.
In order to have $L^{\ot 3}$ in this intersection, we need $L^{\ot 2}\subset E_{200}$.
In order to have $ L^2\cdot M=\langle a_0\ot a_0\ot b_{\a}+ a_0\ot a_{\a}\ot b_0+a_{\a}\ot a_0\ot b_0\rangle$
in the intersection,
we see it must also contain $\langle  a_0\ot a_{\a} +a_{\a}\ot a_0 \rangle= L\cdot M$.
In order to have $ L\cdot Q=\langle\sum_{\a} (a_0\ot  a_{\a}\ot b_{\a} + a_{\a}\ot   a_0\ot b_{\a} +a_{\a}\ot a_{\a}\ot b_0)\rangle$
in the intersection,
we see it must also contain $\langle  \sum_{\a}   a_{\a}  \ot   a_{\a}  \rangle=  Q$.

For the general case, assume by induction $E_{s-1,t,u},E_{s,t-1,u},E_{s,t,u-1}$ are as asserted 
and isomorphic as a module to 
$L^{\ot d-1}\op  L^{d-2}\cdot M\op  L^{d-3}\cdot Q$. 
Arguing as we did for $E_{200}$, first obtaining $L^{\ot d}$, then $ L^{d-1}\cdot M$, then 
$ L^{d-2}\cdot Q$ we conclude.
\end{proof}

Note that the   ideal is $G_{T_{cw,q}}$-fixed  as indeed  it has to be if $\ul{VSP}$ is a point.

Now let $q=2$, in this case it is more convenient to write $T_{cw,2}$ as
$$
T_{cw,2}=\sum_{\s\in \FS_3} a_{\s(1)}\ot b_{\s(2)}\ot c_{\s(3)}.
$$
Write $A=B=C=L_1\op L_2\op L_3$ where, e.g., for $A$, $L_j=\langle a_j\rangle$.
We have $\hat G_{T_{cw,2}}=(\BC^*)^{\times 3}\rtimes \FS_3$.

\begin{theorem} \label{vspcw2} $\ul{VSP}(T_{cw,2})$ and $\ul{VSP}_{v_3(\pp 2), \BP S^3\BC^3}(T_{cw,2})$ 
each consists of three points.
One choice has for all $s,t,u$ with $s+t+u=d$, annihilator in degree $(s,t,u)$ equal to
$$
L_1^{s}\ot L_1^t\ot L_1^u\op \phi(L_1^{d-1}\ot L_2) \op \phi(L_1^{d-1}\ot L_3)\op \phi(L_1^{s-2}\ot L_2\ot L_3)
$$
where $\phi: (L_1^{d-1}\ot L_x) \ra S^sA\ot S^tB\ot S^uC$ is the symmetric embedding.
The other two choices arise from exchanging the role of $L_1$ with $L_2,L_3$.
\end{theorem}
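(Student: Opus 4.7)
The plan is to mimic the proof of Theorem~\ref{vspcwq}, with one decisive new step to rule out additional candidates that arise at $q=2$. By the Fixed Ideal Lemma, every ideal in $\ul{VSP}(T_{cw,2})$ may be assumed $\BB_T$-fixed, and a direct computation identifies $\BB_T$ as a four-dimensional torus whose weight decomposition on $A\otimes B$ has three $1$-dimensional pieces $\langle a_i b_i\rangle$ and three $2$-dimensional pieces $W_{\{i,j\}}=\langle a_i b_j, a_j b_i\rangle$ for $i\ne j$.  The $3$-dimensional subspace $T(C^*)$ occupies precisely the $\FS_2$-symmetric line inside each $W_{\{i,j\}}$, so a $4$-dimensional $\BB_T$-fixed $E_{110}\supset T(C^*)$ is obtained by adjoining either (I) a line $\langle a_i b_i\rangle$ for some $i\in\{1,2,3\}$, or (II) a line in $W_{\{j,k\}}$ independent of its symmetric line, in which case $E_{110}\supset W_{\{j,k\}}$.

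The critical new step is to eliminate Case (II) via the $(210)$-test.  A weight-by-weight computation shows that $(E_{110}\otimes A)\cap(S^2A\otimes B)$ in Case (II) has dimension exactly $3$: the symmetric-in-$A$ contributions arise only at weights $2e_j+e_k$ and $e_j+2e_k$ (from the two rank-one weight vectors $a_j b_k$ and $a_k b_j$) and at weight $e_1+e_2+e_3$ (a permutation of $T$).  Since $3<4=\ur(T_{cw,2})$ the test fails, so $E_{110}=T(C^*)+\langle a_i b_i\rangle$ for some $i\in\{1,2,3\}$.  The analogous analysis on the other pairs of factors gives $E_{101}=T(B^*)+\langle a_{i'}c_{i'}\rangle$ and $E_{011}=T(A^*)+\langle b_{i''}c_{i''}\rangle$, and the $(111)$-test then forces $i=i'=i''$: computing the triple intersection weight by weight, the contributions $a_i\otimes b_i\otimes c_i$ at weight $3e_i$ and $a_i b_i c_j+a_i b_j c_i+a_j b_i c_i$ at weights $2e_i+e_j$ survive only when the three indices coincide, while any mismatch collapses the triple intersection to $\langle T\rangle$.

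Fixing $i\in\{1,2,3\}$ (WLOG $i=1$), the ideal is determined in all higher degrees by the same inductive argument as in Theorem~\ref{vspcwq}: each $E_{stu}$ must be $\BB_T$-fixed of dimension $4$ and contain the images of the three multiplication maps from degree $d-1$, and a dimension count shows the asserted formula is the unique such subspace.  Existence is verified by realizing each of the three candidate ideals as the border apolar ideal of the standard small Coppersmith--Winograd border rank $4$ decomposition after the basis change $a_0\mapsto a_i$.  The Waring claim for $\ul{VSP}_{v_3(\pp 2),\BP S^3\BC^3}(T_{cw,2})$ is parallel but simpler: the symmetric-apolarity torus is only $2$-dimensional, all weight spaces of $S^2 A$ are already $1$-dimensional, the analogue of Case (II) does not arise, and the same inductive construction produces three distinct $4$-planes in $S^3 A$ containing $a_1^3, a_2^3, a_3^3$ respectively.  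The main obstacle is the weight-by-weight $(210)$-computation in Case (II), which is special to $q=2$; the remaining steps closely follow the $q>2$ template.
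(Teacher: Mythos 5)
Your argument is correct and follows essentially the same route as the paper's: torus-fixedness via the Fixed Ideal Lemma, the $(210)$-test forcing $E_{110}=T(C^*)+\langle a_k\ot b_k\rangle$, and the inductive determination of the higher multidegrees exactly as in the $q>2$ theorem. The only difference is that you make explicit two steps the paper leaves implicit — the three-dimensional kernel computation ruling out the case $E_{110}\supset W_{\{j,k\}}$, and the $(111)$-test forcing the same index in $E_{110}$, $E_{101}$, $E_{011}$ — and both of these computations check out.
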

\begin{proof}
We have $T_{cw,2}(C^*)=\langle a_i\ot b_j+b_j\ot a_i \mid i\neq j\rangle$.
The only possibilites for $E_{110}$ for $r=4$ that pass the $(210)$-test
 arise by adding $a_k\ot b_k$ to this for some $k\in \{1,2,3\}$.
Take $k=1$. Then  
$$(E_{110}\ot A)\cap (S^2A\ot B)=
\langle
a_1^2\ot b_1, a_1a_2\ot b_1+a_1^2\ot b_2, a_1a_3\ot b_1+a_1^2\ot b_3, 
\sum_{\s\in \FS_3}a_{\s(1)}\ot a_{\s(2)}\ot b_{\s(e)}\rangle
$$
The only compatible choice of $E_{200}$ is $\langle a_1^2,a_1a_2,a_1a_3,a_2a_3\rangle$.
The situation for higher multi-degrees is similar.
\end{proof}

\begin{remark} In contrast to $T_{cw,2}$, 
by Corollary \ref{r5c3}, $\tdim \ul{VSP}(T_{skewcw,2})\geq 8$.
From \cite{MR3494510} (slightly changing notation)
we have the rank five decomposition:
\begin{align*}
T_{skewcw,2}=\frac 12[&
 2a_1\ot (b_2-b_3)\ot (c_2+c_3)\\
&-
(a_1+a_2)\ot (b_1-b_3)\ot (c_1+c_3)
-
(a_1-a_2)\ot (b_1+b_3)\ot (c_1-c_3)\\
&
+
(a_1+a_3)\ot (b_1-b_2)\ot (c_1+c_2)
-
(a_1-a_3)\ot (b_1+b_2)\ot (c_1-c_2)]
\end{align*}
and the orbit of this decomposition already
has dimension $8$. (This can be seen by noting that more
than four distinct vectors in $\BC^3$ appear in the decomposition.)
\end{remark}

\section{$T_{skewcw,q}$, $q>2$}

\begin{proof}[Proof of Theorem \ref{32qupper}]
For the upper bound, we have
 \begin{align}\label{brskewcw}
  T_{skewcw,q} =\tlim_{t\ra 0} 
&
\frac 1{t^3}[
\sum_{\xi}  [ (a_0+t^2a_{\xi})\ot (b_0-t^2b_{\xi})\ot (c_0-tc_{\xi+p})
+(a_0-t^2a_{\xi})\ot (b_0-tb_{\xi+p})\ot (c_0+t^2c_{\xi })\\
& \nonumber  \ \ \ \  \ 
+
(a_0-ta_{\xi+p})\ot (b_0+t^2b_{\xi})\ot (c_0-t^2c_{\xi })] \\
& \nonumber + 
\frac 1{t^5}(a_0+t^3 \sum_\xi a_{\xi+p})\ot (b_0+t^3 \sum_\xi b_{\xi+p})\ot (c_0+t^3 \sum_\xi c_{\xi+p}) \\
& \nonumber 
-(\frac{3q}{2t^2} +\frac 1{t^5})a_0\ot b_0\ot c_0].
\end{align}

For the lower bounds, 
write $A=B=C=L\op M$ with $\tdim L=1$, $\tdim M=q$ and $M$ is equipped with a symplectic form $\Omega$. We have the group $G=G_{T_{skewcw,q}}= Sp(M)\times GL(L)\times M^*\ot L\rtimes \BZ_3$.
Then
$$
A\ot B=L^{\ot 2} \op  L\cdot M\op   S^2 M\op \La 2 M_0\op (L \ww M \op \Omega)
$$
where the term  in parentheses equals $T_{skewcw,q}(C^*)$. 

we have the following weight diagram for
the $G_{T_{skewcw,q}}$-complement of $T(C^*)$ in $A\ot B$:
 
\begin{tikzpicture}[scale=0.8,every node/.style={scale=0.8}]
  \node at (0bp,0bp) {$L^{\ot 2}$};
  \node at (90bp,0bp) {$L\cdot M$};
  \node at (235bp,0bp) {$S^2 M$};
  \node at (425bp,0bp) {$\Lambda^2 M$};

  \node (b01) at (0bp,-30bp) {$a_0\ot b_0$};

  \node (b12) at (90bp,-50bp) {$a_0\ot b_1+a_1\ot b_0$};
  \node (b13) at (90bp,-80bp) {$a_0\ot b_2+a_2\ot b_0$};
  \node (b14) at (90bp,-110bp) {$a_0\ot b_3+a_3\ot b_0$};
  \node (b15) at (90bp,-125bp) {$\vdots$};

  \node (b23) at (235bp,-80bp) {$a_1\ot b_1$};
  \node (b24) at (235bp,-110bp) {$a_1\ot b_2+a_2\ot b_1$};
  \node (b25) at (235bp,-140bp) {$a_1\ot b_3 + a_3\ot b_1$};
  \node (b26) at (190bp,-170bp) {$a_2\ot b_3 + a_3\ot b_2$};
  \node (b26p) at (280bp,-170bp) {$a_1\ot b_4 +a_4\ot b_1$};
  \node (b27) at (235bp,-185bp) {$\vdots$};

  \node (b31) at (425bp,-110bp) {$a_1\ot b_2-a_2\ot b_1$};
  \node (b32) at (425bp,-140bp) {$a_1\ot b_3-a_3\ot b_1$};
  \node (b33) at (380bp,-170bp) {$a_2\ot b_3-a_3\ot b_2$};
  \node (b33p) at (470bp,-170bp) {$a_1\ot b_4-a_4\ot b_1$};
  \node (b34) at (425bp,-185bp) {$\vdots$};

  \draw (40bp,5bp) -- (40bp,-190bp);
  \draw (140bp,5bp) -- (140bp,-190bp);
  \draw (330bp,5bp) -- (330bp,-190bp);

  \draw[->] (b12) -> (b01);
  \draw[->] (b13) -> (b12);
  \draw[->] (b14) -> (b13);
  \draw[->] (b13) -> (b12);
  \draw[->] (b23) -> (b12);
  \draw[->] (b24) -> (b13);
  \draw[->] (b25) -> (b14);
  \draw[->] (b24) -> (b23);
  \draw[->] (b25) -> (b24);
  \draw[->] (b26) -> (b25);
  \draw[->] (b26p) -> (b25);
  \draw[->] (b32) -> (b31);
  \draw[->] (b33) -> (b32);
  \draw[->] (b33p) -> (b32);
\end{tikzpicture}

We will show that for $q\leq 10$, there is no choice of $E_{110}'$
satisfying all degree three tests when $r=\frac 32 q+1$. We focus on the case $q=10$ as
that is the most difficult, the other cases are easier.

Note that elements of $M$ may be raised to $L$,
so an element of $S^2M$ cannot be placed in $E_{110}'$ unless
its raising to $L\cdot M$ is also there. On the other hand, since
$L\ww M\subset E_{110}$, there is no similar restriction on elements
of $\La 2 M$.

We now restrict to  $q=10$.
We split the types of $(110)$ spaces into $10$ types of cases
depending on the dimension of $E_{110}'$ intersected with
the various irreducible modules:
$$
\begin{matrix}
{\rm case}& L^{\ot 2} & L\cdot M & S^2 M & \Lambda^2_0 M\\
1 & 1 & 4&0&0\\
2&  1 & 3&1&0\\
3&  1 & 2&2&0\\
4x&  1 & 2&1+\frac 12&\frac 12\\
5&  0 & 0&0&5\\
6&  1 & 0&0&4\\
7&  1 & 1&0&3\\
8&  1 & 2&0&2\\
9&  1 & 1&1&2\\
10&  1 & 2&1&1\end{matrix}
$$
Types $1,2,3,8,9,10$ are all single cases
Types  $5,6,7$ each involve a     choice of subset of weight vectors in $\Lambda^2_0 M$
(so they are each a collection of a finite number of cases)
and case $4$ involves a parameter, where we use $\frac 12$ to indicate the parameter,
as the weight vector is a sum of a vector in the two indicated spaces.
Explicitly, case $4x$ may be written
$$E_{110}'=
\langle a_0\ot b_0, a_0\ot b_1+a_1\ot b_0, a_0\ot b_2+a_2\ot b_0,
a_1\ot b_1, x(a_1\ot b_2+a_2\ot b_1)+ a_1\ot b_2-a_2\ot b \rangle.
$$
Of these cases $1,2,3,4x,8,10$ pass the $(210)$ and $(120)$ tests.
No triple passes the $(111)$ test. 
 \end{proof}

  We remark that the  decomposition \eqref{brskewcw} is $\BZ_3$-invariant.

 \begin{corollary}\label{vspskewcwq}
For $10\geq q>2$, and $q=2p$ even,   $\ul{VSP}(T_{skewcw,q})$   contains   the
   isotropic Grassmannian  $G_{\Omega}(\frac q2,M)$. In particular it has dimension at least $\binom{p}2$. 
\end{corollary}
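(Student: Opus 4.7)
The plan is to let the symplectic group $Sp(M) \subset G_{T_{skewcw,q}}$ act on the border rank decomposition \eqref{brskewcw} and track the resulting family of border apolarity ideals in $\ul{VSP}(T_{skewcw,q})$. The decomposition \eqref{brskewcw} is built from a symplectic basis $\{a_\xi, a_{\xi+p}\}_{\xi=1}^p$ of $M$, and in particular depends on the distinguished Lagrangian $V = \langle a_1, \ldots, a_p\rangle$, characterized as the span of those basis vectors that appear in the $A$-slot factors with the higher power of $t$ (i.e.\ $t^2$ rather than $t$).

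First I would form the border apolarity ideal $I_V$ associated to \eqref{brskewcw} via the recipe of \S\ref{bapolarsect}. For each $g \in Sp(M)$, the transformed ideal $g \cdot I_V$ also lies in $\ul{VSP}(T_{skewcw,q})$ since $g$ fixes $T_{skewcw,q}$ and the border rank is $\frac{3}{2}q + 2$ by Theorem \ref{32qupper}. This defines an $Sp(M)$-equivariant map $\psi\colon Sp(M) \to \ul{VSP}(T_{skewcw,q})$. I would then identify the stabilizer of $I_V$: at a minimum, the Siegel parabolic $P_V$ preserving $V$ stabilizes $I_V$, since its Levi $GL(V)$ merely reparametrizes the basis of $V$ in \eqref{brskewcw}, and its unipotent radical modifies each $a_{\xi+p}$ by an element of $V$, which can be absorbed into the higher-order $a_\xi$-contributions in the $t\to 0$ limit. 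Hence $\psi$ descends to $\bar\psi\colon G_\Omega(p, M) = Sp(M)/P_V \to \ul{VSP}(T_{skewcw,q})$.

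To show the image has dimension at least $\binom{p}{2}$, I would exhibit a way to read $V$ off of $I_V$. Using the $G_{T_{skewcw,q}}$-isotypic decomposition of $A \ot B$ displayed in the proof of Theorem \ref{32qupper}, the $L \cdot M$ summand of $E_{110}$ cuts out a subspace whose projection to $M$ equals $V$; this recovery is $Sp(M)$-equivariant, so $\bar\psi$ is injective and embeds $G_\Omega(p, M)$ into $\ul{VSP}(T_{skewcw,q})$. Since $\tdim G_\Omega(p, M) = \binom{p+1}{2} \geq \binom{p}{2}$, the claim follows.

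The main technical obstacle is verifying that $P_V$ actually stabilizes $I_V$: the ideal is defined as a limit of ideals of saturations of $r$-tuples of rank one points, and showing that the unipotent transformations shifting the complementary Lagrangian get absorbed in the limit requires explicit manipulation of the degenerating family in \eqref{brskewcw} at each multidegree $(s,t,u)$. An alternative route bypassing this issue would be to apply Proposition \ref{gposvsppos} to produce a $G_{T_{skewcw,q}}$-orbit of dimension $\geq \binom{p}{2}$ in some $\BP(S^sA \ot S^tB \ot S^uC)$ directly, for example in $\BP(A\ot B)$ using the action of $Sp(M)$ on a generic Lagrangian.
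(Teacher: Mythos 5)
Your route is at bottom the same as the paper's: the paper's proof consists precisely of letting $Sp(M)\subset G_{T_{skewcw,q}}$ act on the explicit decomposition \eqref{brskewcw} and observing that the resulting ideals are parametrized by the choice of Lagrangian. The extra structure you add, however, has the roles of the two Lagrangians reversed, and as stated your stabilizer step fails. Computing the limit of the spans of the $A\ot B$ factors of \eqref{brskewcw}, the first-order limits of the three families give $a_\xi\ot b_0-a_0\ot b_\xi$, $a_0\ot b_{\xi+p}$ and $a_{\xi+p}\ot b_0$, and together with $a_0\ot b_0$ and the general containment $T_{skewcw,q}(C^*)\subseteq E_{110}$ a dimension count ($2p+1+1+p=3p+2=r$) forces
\[
E_{110}\;=\;T_{skewcw,q}(C^*)\;\op\;\langle a_0\ot b_0\rangle\;\op\;L\cdot V', \qquad V':=\langle a_{p+1}\hd a_{2p}\rangle .
\]
In particular $E_{110}\cap (L\cdot M)=L\cdot V'$: the ideal remembers the Lagrangian whose vectors enter \eqref{brskewcw} at order $t$ and in the correction term, not your $V=\langle a_1\hd a_p\rangle$. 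Correspondingly, a nontrivial element of the unipotent radical of $P_V$ sends $a_0\ot b_{\xi+p}+a_{\xi+p}\ot b_0$ to itself plus a nonzero element of $L\cdot V$, which does not lie in $E_{110}$; so $P_V$ does not even stabilize $E_{110}$, and the proposed ``absorption into the higher-order $a_\xi$-contributions'' does not occur. It is the Siegel parabolic $P_{V'}$ that visibly preserves the displayed $E_{110}$, and it is $V'$ that is recovered from the $L\cdot M$-component. With $V$ and $V'$ interchanged throughout, your argument becomes a fleshed-out version of the paper's one-line proof.

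Two further points. First, for the quantitative claim the parabolic-stabilization step you single out as the main obstacle can be avoided: any $g\in Sp(M)$ fixing the ideal fixes $E_{110}$, hence fixes $E_{110}\cap(L\cdot M)=L\cdot V'$, hence lies in $P_{V'}$; so the stabilizer of the ideal is contained in $P_{V'}$ and its $Sp(M)$-orbit inside $\ul{VSP}(T_{skewcw,q})$ already has dimension at least $\tdim Sp(M)/P_{V'}=\binom{p+1}{2}\geq\binom{p}{2}$. The reverse containment (that $P_{V'}$ fixes the ideal in every multidegree) is only needed if one wants the orbit to be literally the Lagrangian Grassmannian. Second, your fallback via Proposition \ref{gposvsppos} does not work: that proposition uses the \emph{smallest} $G_T$-orbit in each $\BP(S^sA\ot S^tB\ot S^uC)$, and since $L$ is $G_{T_{skewcw,q}}$-stable the point $[a_0^s\ot b_0^t\ot c_0^u]$ has a finite orbit, so in every multidegree the minimal orbit is zero-dimensional and the proposition yields only $\tdim\ul{VSP}\geq 0$.
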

\begin{proof}
Examining \eqref{brskewcw}, by $Sp(M)\subset G_{T_{skewcw,q}}$ we may  replace
$\langle a_{\xi}\rangle$ with any isotropic subspace as long as we replace
$\langle a_{\xi+p}\rangle$ with the corresponding dual subspace and
the same changes in $B,C$.
\end{proof}

\section{A simpler Waring border rank $17$ expression for $\tdet_3$}\label{det3bis}
Set $i=\sqrt{-1}$ and   $\zeta = e^{2\pi i/12}$. Then $\text{det}_3 = \sum_{s=1}^{17} m_s^{\ot 3}(t) + O(t)$,
where the $m_s$ are the following matrices
\begin{gather*}
\begin{pmatrix}
\frac{\zeta^{6}}{t^{5}} & 0 & 0 \\
0 & \zeta^{6} & 0 \\
0 & 0 & t^{5}
\end{pmatrix} \quad
\begin{pmatrix}
\frac{1}{t^{5}} & 0 & 0 \\
0 & 1 & 0 \\
0 & 0 & 0
\end{pmatrix} \quad
\begin{pmatrix}
\frac{\zeta^{6}}{t^{5}} & 0 & 0 \\
0 & 0 & t \zeta^{8} \\
0 & t^{4} \zeta^{4} & 0
\end{pmatrix} \quad
\begin{pmatrix}
\frac{\zeta^{4}}{t^{5}} & 0 & 0 \\
0 & 0 & t \zeta^{6} \\
0 & 0 & 0
\end{pmatrix}\\
\begin{pmatrix}
\frac{\zeta^{5}}{t^{5}} & 0 & 0 \\
0 & 0 & 0 \\
0 & t^{4} & 0
\end{pmatrix} \quad
\begin{pmatrix}
\frac{\zeta^{3}}{t^{5}} & 0 & 0 \\
0 & 0 & 0 \\
0 & 0 & t^{5}
\end{pmatrix} \quad
\begin{pmatrix}
0 & \frac{\zeta^{10}}{t^{4}} & \frac{\zeta^{8}}{t^{3}} \\
0 & 0 & t \zeta^{8} \\
t^{3} \zeta^{6} & 0 & 0
\end{pmatrix} \quad
\begin{pmatrix}
0 & \frac{\zeta^{8}}{t^{4}} & \frac{\zeta^{6}}{t^{3}} \\
0 & 0 & t \zeta^{6} \\
0 & 0 & 0
\end{pmatrix}\\
\begin{pmatrix}
0 & \frac{1}{t^{4}} & \frac{1}{t^{3}} \\
0 & 0 & 0 \\
t^{3} \zeta^{6} & 0 & 0
\end{pmatrix} \quad
\begin{pmatrix}
0 & \frac{\zeta^{6}}{t^{4}} & 0 \\
\frac{\zeta^{6}}{t} & 0 & 0 \\
0 & 0 & t^{5} \zeta^{6}
\end{pmatrix} \quad
\begin{pmatrix}
0 & \frac{\zeta^{11}}{t^{4}} & 0 \\
0 & 0 & 0 \\
t^{3} & 0 & 0
\end{pmatrix} \quad
\begin{pmatrix}
0 & \frac{\zeta^{9}}{t^{4}} & 0 \\
0 & 0 & 0 \\
0 & 0 & t^{5} \zeta^{6}
\end{pmatrix}\\
\begin{pmatrix}
0 & 0 & \frac{1}{t^{3}} \\
\frac{\zeta^{6}}{t} & 0 & 0 \\
0 & t^{4} \zeta^{6} & 0
\end{pmatrix} \quad
\begin{pmatrix}
0 & 0 & \frac{1}{t^{3}} \\
0 & \zeta^{4} & 0 \\
t^{3} \zeta^{2} & t^{4} & 0
\end{pmatrix} \quad
\begin{pmatrix}
0 & 0 & \frac{\zeta^{6}}{t^{3}} \\
0 & \zeta^{10} & 0 \\
0 & 0 & 0
\end{pmatrix} \\
\begin{pmatrix}
0 & \frac{5^{\frac{5}{6}} \zeta^{2}}{5  t^{4}} & \frac{ {\left(1+\frac{2}{5}  \sqrt{5}\right)}^{\frac{1}{3}} \zeta^{2}}{t^{3}} \\
\frac{{\left(1-\frac{2}{5}  \sqrt{5} \right)}^{\frac{1}{3}} \zeta^{8} }{t} & 0 & 0 \\
0 & 0 & 0
\end{pmatrix} \quad
\begin{pmatrix}
0 & \frac{5^{\frac{5}{6}} \zeta^{8}}{5  t^{4}} & \frac{{\left(1-\frac{2}{5}  \sqrt{5} \right)}^{\frac{1}{3}} \zeta^{2} }{t^{3}} \\
\frac{{\left(1+ \frac{2}{5}  \sqrt{5}\right)}^{\frac{1}{3}} \zeta^{8} }{t} & 0 & 0 \\
0 & 0 & 0
\end{pmatrix}.
\end{gather*}

\section{$T_{skewcw,4}^{\boxtimes 2}$}\label{skewcw4sect}
What follows is an expression for $T_{\text{skewcw},4}^{\boxtimes 2}$
as $\sum_{s=1}^{42} m_s(t)^{\ot 3} + O(t)$
that is    satisfied to an error of at most $4.4 \times  10^{-15}$ in each entry. 
It consists of $42$ matrices whose entries are rational expressions in
the following $36$ complex numbers:
Let $i=\sqrt{-1}$ and let $\zeta = e^{2\pi i/ 12}$. Set
\begin{tiny}
\begin{align*}
  z_{0} &= -0.8660155098072051 + 0.9452855522785384i &
  z_{1} &= -1.2981710770246242 + 0.0008968724089185688i \\
  z_{2} &= 2.9260271139931078 + 0.1853833642730014i &
  z_{3} &= 0.2542517122150322 + 0.30793819438378284i \\
  z_{4} &= 0.6964375578992822 + 0.2772662627986198i &
  z_{5} &= 0.5507020325318998 - 0.0493931308002328i \\
  z_{6} &= 1.149228383831849 - 1.1683147648642283i &
  z_{7} &= 0.6586404058476252 - 0.16578044112199047i \\
  z_{8} &= 0.7654345273805864 - 0.06877274843008892i &
  z_{9} &= 0.544690883860558 + 0.09720573163212605i \\
  z_{10} &= 0.6932236636741451 + 0.14980159446358277i &
  z_{11} &= 0.5862637032385472 - 0.12844523449559558i \\
  z_{12} &= 2.384363992555291 - 0.08927102369428247i &
  z_{13} &= 0.9664252976479286 + 0.08480470055107503i \\
  z_{14} &= 0.6190926897383283 + 0.15631000400545272i &
  z_{15} &= 0.6283592253932955 - 0.5626050553495663i \\
  z_{16} &= 1.8190778570602204 - 0.22163457440913656i &
  z_{17} &= 1.153187286528645 - 0.07977233251120702i \\
  z_{18} &= 1.4498877801613976 - 0.22515738202335905i &
  z_{19} &= 0.7262464450114047 + 0.7050051641972112i \\
  z_{20} &= 1.1195537528292199 - 0.26381000320340176i &
  z_{21} &= 0.4400325048210471 + 0.6593492930106759i \\
  z_{22} &= 0.3476654993676339 + 0.4095417606798612i &
  z_{23} &= 0.9459769225333798 + 0.24589162882727128i \\
  z_{24} &= 0.7637135867709066 - 0.10529269213820387i &
  z_{25} &= 0.7409392923310902 - 0.10474756303325146i \\
  z_{26} &= 1.0112068238001992 - 0.12695675940574122i &
  z_{27} &= 1.5005677845016696 - 0.24533651960180036i \\
  z_{28} &= 0.6134145054919202 + 0.08121891266185506i &
  z_{29} &= 1.145625294745251 - 0.3813562005184122i \\
  z_{30} &= 1.0607612533915372 - 0.016294891090460426i &
  z_{31} &= 0.941339345482511 + 0.20413704882122435i \\
  z_{32} &= 0.622575977639622 + 0.2555810563389569i &
  z_{33} &= 0.951746321194872 - 0.2894768358835511i \\
  z_{34} &= 1.0532801812660977 - 0.2502246606675517i &
  z_{35} &= 1.0207644184200035 - 0.2106937666100475i.
\end{align*}
\end{tiny}
  The $42$   matrices are:
 \begin{tiny}
\[
\begin{pmatrix}
\frac{\zeta^{3} z_{25} z_{26} z_{31} z_{35}}{t^{269}} & \frac{\zeta^{8} z_{28} t^{61}}{z_{23} z_{25} z_{26} z_{31} z_{33} z_{34}} & \frac{\zeta^{10} z_{24} z_{35} t^{13}}{z_{23} z_{25} z_{26} z_{31} z_{33} z_{34}} & 0 & 0\\
\frac{z_{25} z_{26} z_{31} z_{33} z_{35}}{t^{104}} & 0 & 0 & 0 & \frac{\zeta^{9} z_{18} z_{26} z_{32} t^{148}}{z_{21} z_{23} z_{33} z_{35}^{2}}\\
0 & 0 & 0 & 0 & 0\\
0 & 0 & \frac{\zeta^{7} z_{35} t^{121}}{z_{23} z_{25} z_{26}^{2} z_{31} z_{33} z_{34}} & 0 & \frac{\zeta^{8} z_{23} z_{34} t^{91}}{z_{24} z_{35}^{2}}\\
0 & 0 & 0 & 0 & 0
\end{pmatrix}
\ \ 
\begin{pmatrix}
\frac{\zeta^{10} z_{30}}{t^{269}} & 0 & \frac{\zeta^{10} z_{24} z_{35} t^{13}}{z_{27} z_{28} z_{30} z_{33}} & 0 & 0\\
0 & 0 & \frac{\zeta^{7} z_{24} z_{35} t^{178}}{z_{27} z_{28} z_{30}} & 0 & 0\\
0 & 0 & 0 & 0 & 0\\
0 & 0 & \frac{\zeta^{6} z_{27} z_{28} z_{35}^{2} t^{121}}{z_{23}^{2} z_{25} z_{26}^{2} z_{31} z_{33} z_{34}^{2}} & 0 & 0\\
0 & \frac{\zeta t^{184}}{z_{27} z_{30} z_{34}} & 0 & 0 & 0
\end{pmatrix}
\]
\[
\begin{pmatrix}
0 & 0 & 0 & 0 & 0\\
0 & 0 & 0 & 0 & 0\\
0 & \frac{z_{18} z_{24} z_{28} z_{32} t^{211}}{z_{21} z_{23}^{2} z_{25} z_{33}^{3} z_{35}} & z_{0} t^{163} & 0 & \frac{\zeta z_{34} t^{133}}{z_{25}}\\
0 & 0 & 0 & 0 & 0\\
\frac{\zeta z_{23} z_{25} z_{33}}{z_{24} z_{31} t^{146}} & \frac{\zeta^{5} z_{31} z_{35} t^{184}}{z_{23} z_{27}^{2} z_{28} z_{30} z_{34}^{2}} & 0 & 0 & 0
\end{pmatrix}
\ \ 
\begin{pmatrix}
\frac{\zeta^{9} z_{30}}{z_{27} t^{269}} & 0 & 0 & \frac{\zeta^{7}}{t^{65}} & 0\\
0 & 0 & 0 & 0 & 0\\
0 & 0 & 0 & 0 & 0\\
0 & \frac{\zeta^{5} z_{27} t^{169}}{z_{30} z_{33}} & 0 & 0 & 0\\
0 & \frac{\zeta^{9} z_{27} t^{184}}{z_{30} z_{34}} & \frac{z_{28} t^{136}}{z_{26} z_{34} z_{35}} & 0 & 0
\end{pmatrix}
\]
\[
\begin{pmatrix}
\frac{\zeta^{4} z_{19} z_{35}}{z_{3} z_{12} z_{13} z_{18} z_{32} t^{269}} & 0 & 0 & 0 & \frac{\zeta^{4} z_{18} z_{21} z_{34}}{z_{2} z_{12} z_{25} z_{31} z_{32} z_{35} t^{17}}\\
\frac{\zeta z_{19} z_{33} z_{35}}{z_{3} z_{12} z_{13} z_{18} z_{32} t^{104}} & 0 & 0 & 0 & \frac{\zeta^{9} z_{19} z_{26} z_{32}^{2} z_{33} z_{35} t^{148}}{z_{3} z_{12} z_{13} z_{18}}\\
0 & 0 & 0 & 0 & 0\\
\frac{\zeta^{4} z_{2} z_{3} z_{13} z_{31}}{z_{12} z_{19} z_{20} z_{21} z_{32} z_{34} t^{161}} & 0 & 0 & 0 & 0\\
0 & 0 & 0 & 0 & 0
\end{pmatrix}
\ \ 
\begin{pmatrix}
\frac{1}{z_{34} t^{269}} & 0 & 0 & \frac{\zeta z_{24} z_{26} z_{35}}{z_{28} z_{34} t^{65}} & 0\\
0 & 0 & 0 & 0 & 0\\
\frac{\zeta^{5}}{t^{119}} & 0 & 0 & \frac{z_{24} z_{26} z_{35} t^{85}}{z_{28}} & 0\\
0 & 0 & 0 & 0 & 0\\
0 & 0 & 0 & 0 & 0
\end{pmatrix}
\]
\[
\begin{pmatrix}
0 & \frac{\zeta^{2} z_{15} z_{18} z_{19} z_{28} z_{32} t^{61}}{z_{21} z_{23} z_{25} z_{31} z_{33}^{2} z_{34}} & \frac{\zeta^{11} z_{15} z_{19} z_{35} t^{13}}{z_{34}} & 0 & 0\\
0 & 0 & 0 & 0 & 0\\
0 & 0 & 0 & 0 & 0\\
\frac{\zeta^{11} z_{15} z_{21} z_{23} z_{25} z_{31} z_{35}}{z_{18} z_{19}^{2} z_{26} z_{32} z_{34} t^{161}} & 0 & 0 & 0 & 0\\
\frac{\zeta^{9} z_{21} z_{23} z_{25} z_{33} z_{34}}{z_{15}^{2} z_{18} z_{19}^{2} z_{26} z_{31}^{2} z_{32} z_{35}^{2} t^{146}} & 0 & 0 & 0 & 0
\end{pmatrix}
\
\ 
\begin{pmatrix}
\frac{\zeta^{5} z_{25} z_{31}}{z_{23} z_{34} t^{269}} & 0 & 0 & 0 & \frac{\zeta^{4}}{t^{17}}\\
0 & 0 & 0 & 0 & 0\\
\frac{\zeta^{10} z_{25} z_{31}}{z_{23} t^{119}} & 0 & \frac{\zeta^{7} z_{23} z_{24} z_{33} t^{163}}{z_{20}^{2} z_{31}} & 0 & \zeta^{3} z_{34} t^{133}\\
0 & \frac{\zeta^{2} z_{23}^{2} z_{28} z_{34}^{2} t^{169}}{z_{25}^{2} z_{31}^{2} z_{33} z_{35}} & 0 & 0 & 0\\
0 & 0 & \frac{\zeta^{10} z_{23} t^{136}}{z_{25} z_{31}} & 0 & 0
\end{pmatrix}
\]
\[
\begin{pmatrix}
\frac{\zeta^{11} z_{3} z_{19}}{z_{12} z_{21} z_{23} z_{25} z_{32} t^{269}} & 0 & 0 & 0 & \frac{\zeta^{10} z_{2} z_{18} z_{21} z_{23} z_{24} z_{34}}{z_{12} z_{13} z_{19} z_{20} z_{31} z_{32} z_{35} t^{17}}\\
\frac{\zeta^{8} z_{3} z_{19} z_{33}}{z_{12} z_{21} z_{23} z_{25} z_{32} t^{104}} & 0 & 0 & 0 & \frac{\zeta^{4} z_{3} z_{19} z_{26} z_{32}^{2} z_{33} t^{148}}{z_{12} z_{21} z_{23} z_{25}}\\
0 & 0 & 0 & 0 & 0\\
\frac{\zeta^{9} z_{13} z_{31} z_{35}}{z_{2} z_{3} z_{12} z_{18} z_{24} z_{32} z_{34} t^{161}} & 0 & 0 & 0 & 0\\
0 & 0 & 0 & 0 & 0
\end{pmatrix}
\ \ 
\begin{pmatrix}
\frac{\zeta^{9} z_{28}}{t^{269}} & 0 & 0 & 0 & 0\\
0 & 0 & 0 & 0 & 0\\
\frac{\zeta^{2} z_{28} z_{34}}{t^{119}} & \frac{\zeta^{10} z_{34} t^{211}}{z_{28} z_{33} z_{35}} & 0 & 0 & 0\\
0 & 0 & 0 & 0 & 0\\
0 & 0 & 0 & \frac{\zeta^{5} z_{33} z_{35} t^{58}}{z_{34}} & 0
\end{pmatrix}
\]
\[
\begin{pmatrix}
\frac{\zeta^{5} z_{24} z_{26} z_{29}^{3} z_{30}}{z_{27} t^{269}} & 0 & \frac{\zeta^{11} z_{22}^{3} z_{27} z_{33} z_{35} t^{13}}{z_{20} z_{23} z_{28} z_{30} z_{31}^{2}} & \frac{\zeta^{2}}{t^{65}} & 0\\
0 & 0 & 0 & 0 & 0\\
\frac{\zeta^{10} z_{24} z_{26} z_{29}^{3} z_{30} z_{34}}{z_{27} t^{119}} & 0 & 0 & \zeta z_{34} t^{85} & \frac{\zeta^{10} z_{18} z_{22}^{3} z_{26} z_{27} z_{32} z_{34} z_{35} t^{133}}{z_{20} z_{21} z_{23}^{2} z_{25} z_{28} z_{30}}\\
0 & 0 & 0 & 0 & 0\\
\frac{\zeta^{10} z_{18} z_{22}^{3} z_{25} z_{26} z_{27} z_{32} z_{33} z_{35}}{z_{20} z_{21} z_{23} z_{24} z_{28} z_{30} z_{31} t^{146}} & \frac{z_{27} t^{184}}{z_{24} z_{26} z_{29}^{3} z_{30} z_{34}} & 0 & 0 & 0
\end{pmatrix}
\]
\[
\begin{pmatrix}
\frac{\zeta^{2} z_{25} z_{26} z_{31} z_{32} z_{35}}{z_{18} z_{21} z_{34} t^{269}} & \frac{\zeta^{11} z_{21} z_{24} z_{32}^{2} z_{35} t^{61}}{z_{25} z_{31}} & 0 & 0 & 0\\
\frac{\zeta^{11} z_{25} z_{26} z_{31} z_{32} z_{33} z_{35}}{z_{18} z_{21} z_{34} t^{104}} & 0 & 0 & 0 & \frac{\zeta^{11} z_{28} t^{148}}{z_{21}^{2} z_{23} z_{24} z_{32} z_{33} z_{34} z_{35}^{3}}\\
0 & 0 & 0 & 0 & 0\\
\frac{\zeta^{11} z_{25} z_{31} z_{32} z_{35}}{z_{18} z_{21} z_{24} z_{34} t^{161}} & 0 & 0 & 0 & 0\\
0 & 0 & 0 & 0 & 0
\end{pmatrix}
\ \ 
\begin{pmatrix}
0 & 0 & 0 & 0 & \frac{\zeta^{6} z_{18} z_{26} z_{31} z_{32}}{z_{20} z_{21} z_{34} t^{17}}\\
0 & 0 & 0 & 0 & 0\\
0 & 0 & 0 & 0 & 0\\
\frac{\zeta z_{21}^{2} z_{23}^{2} z_{24} z_{25} z_{33}}{z_{18}^{2} z_{19}^{3} z_{20} z_{26}^{2} z_{31} z_{32}^{2} t^{161}} & 0 & 0 & 0 & 0\\
\frac{\zeta^{11}}{t^{146}} & 0 & 0 & 0 & 0
\end{pmatrix}
\]
\[
\begin{pmatrix}
\frac{\zeta^{3} z_{9} z_{10} z_{11} z_{18} z_{19} z_{30} z_{32} z_{34}}{z_{5} z_{7} z_{8} z_{16} z_{21} z_{23} z_{25} z_{33} t^{269}} & 0 & 0 & 0 & \frac{\zeta z_{5} z_{11} z_{16} z_{18} z_{24} z_{26} z_{31} z_{32}}{z_{4} z_{20} z_{21} z_{25} t^{17}}\\
0 & 0 & \frac{\zeta^{11} z_{5} z_{11} z_{16} z_{20} z_{33} t^{178}}{z_{4}} & 0 & 0\\
0 & 0 & 0 & 0 & 0\\
\frac{\zeta^{10} z_{4}}{z_{19} z_{20} z_{27} z_{30} z_{33} t^{161}} & 0 & 0 & 0 & 0\\
\frac{\zeta^{2} z_{4}}{z_{19} z_{20} z_{27} z_{30} z_{34} t^{146}} & 0 & \frac{\zeta^{6} z_{9} z_{10} z_{11} z_{18} z_{19} z_{27} z_{28} z_{32} t^{136}}{z_{5} z_{7} z_{8} z_{16} z_{21} z_{23} z_{25} z_{26} z_{33} z_{35}} & 0 & 0
\end{pmatrix}
\ \ 
\begin{pmatrix}
\frac{\zeta^{5} z_{28} z_{35}^{2}}{t^{269}} & 0 & 0 & \frac{\zeta^{5}}{t^{65}} & 0\\
\frac{\zeta^{2} z_{28} z_{33} z_{35}^{2}}{t^{104}} & 0 & 0 & \zeta^{8} z_{33} t^{100} & 0\\
0 & 0 & 0 & 0 & 0\\
0 & 0 & 0 & t^{43} & 0\\
0 & 0 & 0 & \frac{\zeta^{10} z_{33} t^{58}}{z_{34}} & 0
\end{pmatrix}
\]
\[
\begin{pmatrix}
0 & \frac{\zeta^{9} z_{17}^{2} z_{21} z_{24} z_{28} z_{29}^{2} t^{61}}{z_{18} z_{22}^{2} z_{25} z_{27}^{2} z_{32} z_{33} z_{34} z_{35}} & \frac{\zeta^{4} z_{22} z_{27} t^{13}}{z_{17} z_{26} z_{29} z_{31}^{2}} & \frac{\zeta z_{22} z_{27} z_{31} z_{33} z_{35}}{z_{17} z_{28} z_{29} t^{65}} & 0\\
0 & 0 & 0 & 0 & 0\\
0 & \frac{\zeta^{2} z_{17}^{2} z_{21} z_{24} z_{28} z_{29}^{2} t^{211}}{z_{18} z_{22}^{2} z_{25} z_{27}^{2} z_{32} z_{33} z_{35}} & 0 & \frac{z_{22} z_{27} z_{31} z_{33} z_{34} z_{35} t^{85}}{z_{17} z_{28} z_{29}} & \frac{\zeta^{3} z_{18} z_{22} z_{27} z_{32} z_{34} t^{133}}{z_{17} z_{21} z_{23} z_{25} z_{29} z_{33}}\\
0 & 0 & 0 & 0 & 0\\
\frac{\zeta^{3} z_{18} z_{22} z_{25} z_{27} z_{32}}{z_{17} z_{21} z_{24} z_{29} z_{31} t^{146}} & \frac{\zeta^{2} z_{17}^{2} z_{22} t^{184}}{z_{20} z_{23} z_{24} z_{29} z_{30}^{2} z_{31} z_{34}} & 0 & 0 & 0
\end{pmatrix}
\]
\[
\begin{pmatrix}
\frac{\zeta^{11} z_{27} z_{28} z_{30}^{2}}{z_{26} z_{35} t^{269}} & 0 & 0 & 0 & \frac{\zeta^{3}}{t^{17}}\\
0 & 0 & \frac{\zeta z_{20}^{2} z_{21} z_{25} z_{33} t^{178}}{z_{18} z_{24} z_{26} z_{31} z_{32}} & 0 & 0\\
0 & 0 & 0 & 0 & 0\\
0 & 0 & \frac{\zeta^{7} z_{26} z_{35} t^{121}}{z_{27} z_{28} z_{30}^{2} z_{33}} & 0 & 0\\
0 & 0 & \frac{z_{25} z_{31} t^{136}}{z_{23} z_{26} z_{34}^{2}} & 0 & 0
\end{pmatrix}
\ \ 
\begin{pmatrix}
\frac{\zeta^{8} z_{25} z_{26} z_{31} z_{32} z_{35}}{z_{18} z_{21} z_{34} t^{269}} & 0 & \frac{\zeta^{4} z_{21} z_{24} z_{35} t^{13}}{z_{25} z_{26} z_{31} z_{32}} & 0 & 0\\
\frac{\zeta^{5} z_{25} z_{26} z_{31} z_{32} z_{33} z_{35}}{z_{18} z_{21} z_{34} t^{104}} & 0 & 0 & 0 & \frac{\zeta^{5} z_{28} t^{148}}{z_{21}^{2} z_{23} z_{24} z_{32} z_{33} z_{34} z_{35}^{3}}\\
0 & 0 & 0 & 0 & 0\\
\frac{\zeta^{5} z_{25} z_{31} z_{32} z_{35}}{z_{18} z_{21} z_{24} z_{34} t^{161}} & 0 & 0 & 0 & 0\\
0 & 0 & 0 & 0 & 0
\end{pmatrix}
\]
\[
\begin{pmatrix}
\frac{\zeta^{10} z_{5} z_{18} z_{19} z_{28} z_{30} z_{32}}{z_{21} z_{23} z_{25} z_{33} z_{35} t^{269}} & 0 & 0 & 0 & \frac{z_{4} z_{11} z_{16} z_{31}}{z_{5} z_{19} z_{20} t^{17}}\\
0 & 0 & \frac{\zeta^{10} z_{4} z_{11} z_{16} z_{20} z_{21} z_{25} z_{33} t^{178}}{z_{5} z_{18} z_{19} z_{24} z_{26} z_{32}} & 0 & 0\\
0 & 0 & 0 & 0 & 0\\
\frac{\zeta^{10} z_{9} z_{10} z_{11} z_{18} z_{24} z_{26} z_{32} z_{34} z_{35}}{z_{4} z_{7} z_{8} z_{16} z_{20} z_{21} z_{25} z_{27} z_{28} z_{30} z_{33} t^{161}} & 0 & 0 & 0 & 0\\
\frac{\zeta^{2} z_{9} z_{10} z_{11} z_{18} z_{24} z_{26} z_{32} z_{35}}{z_{4} z_{7} z_{8} z_{16} z_{20} z_{21} z_{25} z_{27} z_{28} z_{30} t^{146}} & 0 & \frac{\zeta z_{5} z_{18} z_{19} z_{27} z_{28}^{2} z_{32} t^{136}}{z_{21} z_{23} z_{25} z_{26} z_{33} z_{34} z_{35}^{2}} & 0 & 0
\end{pmatrix}
\]
\[
\begin{pmatrix}
\frac{\zeta^{4} z_{17}^{2} z_{22} z_{26} z_{29}}{z_{20} z_{23} z_{30} z_{34} t^{269}} & 0 & \frac{z_{22} z_{27} z_{28} z_{29} z_{30} z_{34}^{2} t^{13}}{z_{17} z_{26} z_{31} z_{35}} & \frac{\zeta^{8} z_{22} z_{24} z_{26} z_{29} z_{33} z_{35}^{2}}{z_{17} z_{27} z_{28}^{2} z_{31} z_{34} t^{65}} & 0\\
0 & 0 & 0 & 0 & 0\\
\frac{\zeta^{9} z_{17}^{2} z_{22} z_{26} z_{29}}{z_{20} z_{23} z_{30} t^{119}} & 0 & 0 & \frac{\zeta^{7} z_{22} z_{24} z_{26} z_{29} z_{33} z_{35}^{2} t^{85}}{z_{17} z_{27} z_{28}^{2} z_{31}} & \frac{\zeta^{11} z_{18} z_{22} z_{27} z_{28} z_{29} z_{30} z_{31} z_{32} z_{34}^{3} t^{133}}{z_{17} z_{21} z_{23} z_{25} z_{33} z_{35}}\\
0 & 0 & 0 & 0 & 0\\
\frac{\zeta^{11} z_{18} z_{22} z_{25} z_{27} z_{28} z_{29} z_{30} z_{32} z_{34}^{2}}{z_{17} z_{21} z_{24} z_{35} t^{146}} & 0 & 0 & 0 & 0
\end{pmatrix}
\]
\[
\begin{pmatrix}
0 & 0 & \zeta^{5} z_{19} z_{31} t^{13} & 0 & 0\\
0 & 0 & \zeta^{8} z_{19} z_{31} z_{33} t^{178} & 0 & 0\\
0 & 0 & 0 & 0 & 0\\
\frac{\zeta^{8} z_{21} z_{23} z_{25}}{z_{18} z_{19}^{2} z_{26} z_{31} z_{32} t^{161}} & 0 & 0 & 0 & 0\\
\frac{z_{21} z_{23} z_{25} z_{33}}{z_{18} z_{19}^{2} z_{26} z_{31} z_{32} z_{34} t^{146}} & 0 & 0 & 0 & 0
\end{pmatrix}
\ \ 
\begin{pmatrix}
\frac{\zeta^{3}}{t^{269}} & \frac{\zeta^{7} t^{61}}{z_{33}} & 0 & \frac{\zeta^{10}}{t^{65}} & 0\\
\frac{z_{33}}{t^{104}} & 0 & 0 & \zeta z_{33} t^{100} & 0\\
0 & 0 & 0 & 0 & 0\\
\frac{\zeta^{4}}{t^{161}} & 0 & 0 & 0 & 0\\
0 & 0 & 0 & 0 & 0
\end{pmatrix}
\]
\[
\begin{pmatrix}
0 & \frac{\zeta^{10} z_{16}^{2} z_{21} z_{23} z_{25} z_{28} t^{61}}{z_{18} z_{19} z_{32} z_{35}} & 0 & \frac{\zeta^{4} z_{18} z_{19}^{2} z_{26} z_{31} z_{32} z_{35}}{z_{16} z_{21} z_{23} z_{25} z_{28} t^{65}} & 0\\
0 & 0 & 0 & 0 & 0\\
0 & 0 & 0 & 0 & 0\\
\frac{\zeta}{z_{16} z_{19} z_{26} z_{31} z_{33} t^{161}} & 0 & 0 & 0 & 0\\
\frac{\zeta^{5}}{z_{16} z_{19} z_{26} z_{31} z_{34} t^{146}} & 0 & 0 & 0 & 0
\end{pmatrix}
\ \ 
\begin{pmatrix}
\frac{\zeta^{7} z_{28}}{t^{269}} & \frac{\zeta^{3} t^{61}}{z_{28} z_{33} z_{35}} & 0 & 0 & 0\\
0 & 0 & 0 & 0 & 0\\
\frac{z_{28} z_{34}}{t^{119}} & 0 & 0 & 0 & 0\\
0 & \frac{\zeta^{4} t^{169}}{z_{28} z_{33} z_{35}} & 0 & 0 & 0\\
0 & 0 & 0 & \frac{\zeta^{3} z_{33} z_{35} t^{58}}{z_{34}} & 0
\end{pmatrix}
\]
\[
\begin{pmatrix}
\frac{z_{14}}{t^{269}} & \frac{\zeta^{4} z_{30} t^{61}}{z_{14}^{2} z_{27} z_{33}} & 0 & 0 & 0\\
0 & 0 & 0 & 0 & 0\\
0 & 0 & 0 & 0 & 0\\
0 & 0 & 0 & \frac{\zeta^{11} z_{14} z_{27} t^{43}}{z_{30}} & 0\\
0 & 0 & 0 & 0 & 0
\end{pmatrix}
\ \ 
\begin{pmatrix}
\frac{\zeta^{10} z_{30}}{t^{269}} & \frac{\zeta^{2} t^{61}}{z_{27} z_{30} z_{33}} & 0 & 0 & 0\\
0 & 0 & 0 & 0 & 0\\
0 & 0 & 0 & 0 & 0\\
0 & 0 & \frac{\zeta z_{35} t^{121}}{z_{26} z_{27} z_{28} z_{30} z_{33}} & 0 & \frac{\zeta z_{27} z_{28} t^{91}}{z_{24} z_{35}}\\
0 & 0 & 0 & 0 & \frac{\zeta^{5} z_{27} z_{28} z_{33} t^{106}}{z_{24} z_{34} z_{35}}
\end{pmatrix}
\]
\[
\begin{pmatrix}
\frac{\zeta^{4}}{t^{269}} & \frac{\zeta^{3} z_{28} t^{61}}{z_{33} z_{35}} & 0 & 0 & 0\\
0 & 0 & 0 & 0 & 0\\
\frac{\zeta^{9} z_{34}}{t^{119}} & \frac{\zeta^{2} z_{28} z_{34} t^{211}}{z_{33} z_{35}} & 0 & 0 & 0\\
0 & 0 & \frac{\zeta^{2} t^{121}}{z_{26} z_{33}} & 0 & 0\\
0 & 0 & 0 & 0 & \frac{\zeta^{4} z_{33} t^{106}}{z_{24} z_{34}}
\end{pmatrix}
\ \ 
\begin{pmatrix}
\frac{\zeta^{2} z_{14}}{t^{269}} & 0 & 0 & 0 & 0\\
0 & \frac{\zeta^{9} z_{30} t^{226}}{z_{14}^{2} z_{27}} & 0 & 0 & 0\\
0 & 0 & 0 & 0 & 0\\
0 & \frac{\zeta z_{30} t^{169}}{z_{14}^{2} z_{27} z_{33}} & 0 & \frac{\zeta z_{14} z_{27} t^{43}}{z_{30}} & 0\\
0 & 0 & 0 & 0 & 0
\end{pmatrix}
\]
\[
\begin{pmatrix}
\frac{\zeta^{4} z_{30}}{t^{269}} & 0 & \frac{\zeta^{4} z_{24} z_{35} t^{13}}{z_{27} z_{28} z_{30} z_{33}} & 0 & 0\\
0 & 0 & \frac{\zeta z_{24} z_{35} t^{178}}{z_{27} z_{28} z_{30}} & 0 & 0\\
0 & 0 & 0 & 0 & 0\\
0 & 0 & 0 & 0 & \frac{\zeta^{7} z_{27} z_{28} t^{91}}{z_{24} z_{35}}\\
0 & 0 & 0 & 0 & \frac{\zeta^{11} z_{27} z_{28} z_{33} t^{106}}{z_{24} z_{34} z_{35}}
\end{pmatrix}
\ \ 
\begin{pmatrix}
\frac{\zeta^{8} z_{18} z_{22} z_{24} z_{26} z_{27} z_{32}}{z_{20} z_{21} z_{23} z_{31} z_{33} t^{269}} & 0 & \frac{\zeta^{2} z_{20} z_{22} z_{27} z_{34} t^{13}}{z_{33}} & 0 & 0\\
0 & 0 & 0 & 0 & 0\\
\frac{\zeta z_{18} z_{22} z_{24} z_{26} z_{27} z_{32} z_{34}}{z_{20} z_{21} z_{23} z_{31} z_{33} t^{119}} & 0 & \frac{\zeta^{3} z_{21} z_{23} z_{24} z_{31} t^{163}}{z_{18} z_{22}^{2} z_{26} z_{27}^{2} z_{32}} & 0 & \frac{\zeta^{10} z_{22} z_{27} z_{31} z_{33} t^{133}}{z_{20} z_{23} z_{25}}\\
0 & 0 & 0 & 0 & 0\\
\frac{\zeta^{10} z_{22} z_{25} z_{27} z_{33}^{2}}{z_{20} z_{24} z_{34} t^{146}} & \frac{\zeta^{3} z_{20} z_{22} z_{35} t^{184}}{z_{24} z_{27} z_{28} z_{30}} & 0 & 0 & 0
\end{pmatrix}
\]
\[
\begin{pmatrix}
\frac{\zeta^{10}}{t^{269}} & 0 & 0 & 0 & 0\\
0 & 0 & 0 & 0 & 0\\
\frac{\zeta^{3} z_{34}}{t^{119}} & 0 & 0 & 0 & 0\\
0 & 0 & \frac{\zeta^{8} t^{121}}{z_{26} z_{33}} & 0 & 0\\
0 & z_{1} t^{184} & \frac{\zeta^{6} z_{25} z_{31} t^{136}}{z_{23} z_{34}^{2}} & 0 & 0
\end{pmatrix}
\ \ 
\begin{pmatrix}
\frac{\zeta^{5} z_{24} z_{25} z_{33}}{z_{20}^{2} z_{34} t^{269}} & 0 & 0 & 0 & \frac{\zeta^{8}}{t^{17}}\\
0 & 0 & 0 & 0 & 0\\
\frac{\zeta^{10} z_{24} z_{25} z_{33}}{z_{20}^{2} t^{119}} & 0 & \zeta^{9} t^{163} & 0 & \frac{\zeta^{3} z_{23} z_{24} z_{33} z_{34} t^{133}}{z_{20}^{2} z_{31}}\\
0 & 0 & 0 & 0 & 0\\
\frac{\zeta}{t^{146}} & 0 & 0 & 0 & 0
\end{pmatrix}
\]
\[
\begin{pmatrix}
\frac{\zeta^{6}}{t^{269}} & 0 & \frac{\zeta z_{24} t^{13}}{z_{33}} & 0 & 0\\
0 & 0 & 0 & 0 & 0\\
\frac{\zeta^{11} z_{34}}{t^{119}} & 0 & \frac{z_{24} z_{34} t^{163}}{z_{33}} & 0 & 0\\
0 & 0 & 0 & 0 & 0\\
0 & 0 & 0 & 0 & \frac{\zeta^{6} z_{33} t^{106}}{z_{24} z_{34}}
\end{pmatrix}
\ \ 
\begin{pmatrix}
\frac{\zeta^{10} z_{9} z_{34}}{t^{269}} & 0 & 0 & 0 & 0\\
0 & \frac{\zeta z_{7} z_{8} z_{11} z_{18} z_{31} z_{32} z_{35} t^{226}}{z_{10} z_{21} z_{23} z_{25} z_{27} z_{28} z_{30}} & 0 & 0 & 0\\
0 & 0 & 0 & 0 & 0\\
\frac{\zeta^{5} z_{9} z_{34}}{t^{161}} & 0 & 0 & 0 & 0\\
\frac{\zeta^{9} z_{9} z_{33}}{t^{146}} & 0 & \frac{\zeta z_{9} z_{27} z_{28} t^{136}}{z_{26} z_{30} z_{35}} & 0 & 0
\end{pmatrix}
\]
\[
\begin{pmatrix}
\frac{\zeta^{7} z_{21} z_{24}}{z_{18} z_{23} z_{26} z_{32} t^{269}} & 0 & 0 & 0 & \frac{\zeta^{5}}{t^{17}}\\
0 & 0 & 0 & 0 & 0\\
\frac{z_{21} z_{24} z_{34}}{z_{18} z_{23} z_{26} z_{32} t^{119}} & 0 & 0 & 0 & \frac{\zeta^{5} z_{21} z_{24} z_{34}^{2} t^{133}}{z_{18} z_{25} z_{26} z_{31} z_{32}}\\
0 & 0 & 0 & 0 & 0\\
\frac{\zeta^{8} z_{18} z_{23} z_{25} z_{26} z_{32} z_{33}}{z_{20}^{2} z_{21} z_{34} t^{146}} & 0 & 0 & 0 & 0
\end{pmatrix}
\ \ 
\begin{pmatrix}
\frac{\zeta^{9}}{t^{269}} & 0 & 0 & \frac{\zeta^{4}}{t^{65}} & 0\\
\frac{\zeta^{6} z_{33}}{t^{104}} & 0 & 0 & \zeta^{7} z_{33} t^{100} & 0\\
0 & 0 & 0 & 0 & 0\\
\frac{\zeta^{10}}{t^{161}} & 0 & 0 & 0 & 0\\
0 & 0 & 0 & 0 & 0
\end{pmatrix}
\]
\[
\begin{pmatrix}
\frac{\zeta^{11} z_{25} z_{35}}{z_{13} t^{269}} & 0 & 0 & 0 & \frac{\zeta^{7} z_{23} z_{34}}{z_{13} z_{31} z_{35}^{2} t^{17}}\\
\frac{\zeta^{8} z_{25} z_{33} z_{35}}{z_{13} t^{104}} & 0 & 0 & 0 & \frac{\zeta^{4} z_{25} z_{26} z_{32}^{3} z_{33} z_{35} t^{148}}{z_{13}}\\
0 & 0 & 0 & 0 & 0\\
0 & \frac{\zeta^{11} z_{13}^{2} z_{28} t^{169}}{z_{25}^{2} z_{33}} & \frac{\zeta^{9} z_{13}^{2} z_{31} z_{35} t^{121}}{z_{23} z_{25} z_{33} z_{34}} & 0 & 0\\
0 & 0 & 0 & 0 & 0
\end{pmatrix}
\ \ 
\begin{pmatrix}
\frac{\zeta^{9} z_{25} z_{26} z_{31} z_{35}}{t^{269}} & 0 & 0 & 0 & 0\\
\frac{\zeta^{6} z_{25} z_{26} z_{31} z_{33} z_{35}}{t^{104}} & 0 & 0 & 0 & \frac{\zeta^{3} z_{18} z_{26} z_{32} t^{148}}{z_{21} z_{23} z_{33} z_{35}^{2}}\\
0 & 0 & 0 & 0 & 0\\
0 & 0 & 0 & 0 & \frac{\zeta^{2} z_{23} z_{34} t^{91}}{z_{24} z_{35}^{2}}\\
0 & 0 & 0 & 0 & 0
\end{pmatrix}
\]
\[
\begin{pmatrix}
\frac{\zeta^{9} z_{7} z_{11} z_{16} z_{30} z_{34}}{z_{6} z_{27} t^{269}} & 0 & 0 & \frac{\zeta^{11} z_{6} z_{10} z_{21} z_{23} z_{25} z_{26} z_{30}}{z_{9} z_{18} z_{32} t^{65}} & 0\\
0 & \frac{\zeta^{2} z_{8} z_{10} z_{16}^{2} z_{26} z_{28} z_{33} z_{34} t^{226}}{z_{35}} & 0 & 0 & 0\\
0 & 0 & 0 & 0 & 0\\
\frac{\zeta^{8} z_{8} z_{10} z_{18} z_{32} z_{34}}{z_{16} z_{21} z_{23} z_{25} z_{31} z_{33} t^{161}} & 0 & 0 & 0 & 0\\
\frac{z_{8} z_{10} z_{18} z_{32}}{z_{16} z_{21} z_{23} z_{25} z_{31} t^{146}} & 0 & \frac{z_{7} z_{11} z_{16} z_{28} t^{136}}{z_{6} z_{26} z_{35}} & 0 & 0
\end{pmatrix}
\]
\[
\begin{pmatrix}
\frac{\zeta^{10} z_{6} z_{8} z_{10} z_{26} z_{30} z_{34}}{z_{27} t^{269}} & 0 & 0 & \frac{\zeta^{2} z_{10} z_{16} z_{28} z_{30}}{z_{6} z_{9} z_{31} z_{35} t^{65}} & 0\\
0 & \frac{\zeta^{4} z_{7} z_{11} z_{16}^{2} z_{21} z_{23} z_{25} z_{26} z_{31} z_{33} z_{34} t^{226}}{z_{18} z_{32}} & 0 & 0 & 0\\
0 & 0 & 0 & 0 & 0\\
\frac{\zeta^{10} z_{7} z_{11} z_{34} z_{35}}{z_{16} z_{28} z_{33} t^{161}} & 0 & 0 & 0 & 0\\
\frac{\zeta^{2} z_{7} z_{11} z_{35}}{z_{16} z_{28} t^{146}} & 0 & \frac{\zeta z_{6} z_{8} z_{10} z_{28} t^{136}}{z_{35}} & 0 & 0
\end{pmatrix}
\]
\[
\begin{pmatrix}
\frac{\zeta^{8}}{z_{15}^{2} z_{18} z_{21} z_{23} t^{269}} & \frac{\zeta^{3} z_{15} z_{18} z_{28} t^{61}}{z_{23} z_{25} z_{26} z_{31} z_{32} z_{33}^{2} z_{34} z_{35}} & \frac{\zeta^{4} z_{15} z_{21} z_{24} z_{32} z_{35}^{3} t^{13}}{z_{34}} & 0 & 0\\
\frac{\zeta^{5} z_{33}}{z_{15}^{2} z_{18} z_{21} z_{23} t^{104}} & 0 & 0 & 0 & \frac{\zeta z_{26} z_{32}^{3} z_{33} t^{148}}{z_{15}^{2} z_{18} z_{21} z_{23}}\\
0 & 0 & 0 & 0 & 0\\
\frac{\zeta^{11} z_{15} z_{23} z_{25} z_{31}}{z_{18} z_{21} z_{24} z_{34} t^{161}} & 0 & 0 & 0 & 0\\
0 & 0 & 0 & 0 & 0
\end{pmatrix}
\ \ 
\begin{pmatrix}
\frac{\zeta^{7} z_{28} z_{35}^{2}}{t^{269}} & \frac{t^{61}}{z_{28} z_{33} z_{35}^{2}} & 0 & \frac{\zeta^{7}}{t^{65}} & 0\\
\frac{\zeta^{4} z_{28} z_{33} z_{35}^{2}}{t^{104}} & 0 & 0 & \zeta^{10} z_{33} t^{100} & 0\\
0 & 0 & 0 & 0 & 0\\
0 & \frac{\zeta t^{169}}{z_{28} z_{33} z_{35}^{2}} & 0 & \zeta^{2} t^{43} & 0\\
0 & 0 & 0 & \frac{z_{33} t^{58}}{z_{34}} & 0
\end{pmatrix}
\]
\end{tiny}

We now give an overview of the method used to obtain the expressions
for $\tdet_3$ and $T_{skewcw,4}^{\boxtimes 2}$:
 
Fix bases $a_i\in A$, $b_i\in B$, and $c_i\in C$. A tensor $T\in A\ot B\ot C$
has an expression $T=\sum_{i,j,k} T^{ijk} a_i\ot b_j \ot c_k$ and is
\emph{standard tight} in this basis if there exist injective  functions
$\omega_A : [m] \to \ZZ$, $\omega_B : [m] \to \ZZ$, $\omega_C :
[m] \to \ZZ$ so that $T^{ijk}\ne 0$ implies 
$\omega_A(i)+\omega_B(j)+\omega_C(k) = 0$. In this case, we will call a choice of
$(\omega_A,\omega_B,\omega_C)$ satisfying the constraints a set of \emph{tight
weights}. Given a set of tight weights for $T$, we   consider  border rank decompositions
of the form:
\begin{equation}\label{tightwt:eqs}
  T = {\textstyle \sum}_{s=1}^r \cA_s(t) \ot \cB_s(t) \ot \cC_s(t) + O(t),
\end{equation}
where $\cA_s(t) = \sum_{i=1}^{m} \cA_{si} t^{\omega_A(i)} a_i$,
$\cB_s(t) = \sum_{j=1}^{m} \cB_{sj} t^{\omega_B(j)} b_j$, and
$\cC_s(t) = \sum_{k=1}^{m} \cC_{sk} t^{\omega_C(k)} c_k$.
Note that when the tight weights are trivial, this is an ordinary rank
decomposition. In our situation, the equations correspond
to a strict subset of the equations describing a rank decomposition, namely
those corresponding to triples $(i,j,k)$ where
$\omega_A(i)+\omega_B(j)+\omega_C(k) \le 0$. 
In the case of $T_{skewcw,4}^{\boxtimes 2}$ this reduces the number of equations down from $\binom{25+2}3=2925$
to $692$   and just as with a rank decomposition, there are
$3rm=3150$ unknowns.


We 
pick a   choice of tight weights which minimizes the number of equations to be
solved. The problem of obtaining a border rank decomposition is then split into
two questions: first, to compute a set of tight weights
$(\omega_A,\omega_B,\omega_C)$ so that $\#\{(i,j,k) \mid
\omega_A(i)+\omega_B(j)+\omega_C(k) \le 0\}$ is minimal, and second, to solve the
resulting equations \eqref{tightwt:eqs} in the $\cA_{si}$, $\cB_{sj}$, $\cC_{sk}$.

Consider the first question. Given sets $S_{\le},S_{>} \subset [m]\times
[m] \times [m]$, consider the problem of deciding if there are tight weights
$(\omega_A,\omega_B,\omega_C)$ satisfying the additional constraints that
$\omega_A(i)+\omega_B(j)+\omega_C(k) \le 0$ for $(i,j,k) \in S_{\le}$ and
$\omega_A(i)+\omega_B(j)+\omega_C(k) \ge 1$ for $(i,j,k) \in S_{>}$. These
conditions along with the original equality conditions form a linear program on the
images of $(\omega_A,\omega_B,\omega_C)$ which may be efficiently solved. There
is no harm in letting the linear program be defined over the rationals, as we may
clear denominators to obtain a solution in integers. 
One can use this fact to prune an exhaustive search of choices of $S_{\le},
S_{>}$ to find one for which $S_{\le}\cup S_{<} =  [m]\times [m]
\times [m]$, there exists a corresponding set of tight weights,
and $\# S_{\le}$ is minimal. While this is an exponential procedure, this
optimization was sufficient to solve the problem for this decomposition.

The second problem, solving the associated system, is solved with the Levenberg-Marquardt
nonlinear least squares algorithm \cite{MR10666,MR153071}.   The sparse structure of the answer
is obtained by speculatively zeroing
(or setting to simple values) coefficients   until all freedom with respect to the
equations is lost. In other words, we impose additional simple equations on the
solution and solve again until we obtain an isolated point, which can be verified by
checking that the Jacobian has full rank numerically. This procedure is repeated
many times in order to find a simple solution.  Ideally, we would
prove the resulting parameters indeed approximate an exact solution to the
equations by searching for additional relations between the parameters and then
using such relations to make symbolic methods tractable. In this case, all such
attempts failed. See \cite{CGLVkron} for further discussion of these techniques.

The border rank decomposition in this section is also a Waring border rank 
decomposition, that is, $A=B=C$, and $\cA_s(t) = \cB_s(t) = \cC_s(t)$; in particular,
$\omega_A = \omega_B = \omega_C$. This condition was imposed to make the
nonlinear search more tractable, and it also has independent interest.  The
techniques presented are equally applicable in the symmetric case as well as the
asymmetric.  

We remark that numerous relaxations of this method are possible. It was inspired by the
improved expression for $\tdet_3$, which had the structure we assume. It remains to determine
how useful it will be for more general types of tensors.

 \bibliographystyle{amsplain}

\bibliography{Lmatrix}

\end{document}